\documentclass[a4paper,11pt,oneside]{article}
\usepackage{graphicx}
\usepackage{amscd}
\usepackage{amsmath}
\usepackage{caption}
\usepackage{amsfonts}
\usepackage{amssymb}
\usepackage{amsthm}
\usepackage{mathrsfs}
\usepackage{multicol}
\usepackage{color}
\usepackage[english]{babel}
\usepackage[T1]{fontenc}
\usepackage{textcomp}
\usepackage[utf8]{inputenc}
\usepackage{enumitem}
\usepackage{indentfirst}
\usepackage{tikz}
\usetikzlibrary{positioning, fit,arrows.meta,shapes,matrix}
\usepackage{float}

\newcommand{\N}{\mathbb N}

\newcommand{\RR}{{{\rm I} \kern -.15em {\rm R} }}

\DeclareMathOperator\supp{supp}

\begin{document}
	\theoremstyle{plain} \newtheorem{thm}{Theorem}[section] \newtheorem{cor}[thm]{Corollary} \newtheorem{lem}[thm]{Lemma} \newtheorem{prop}[thm]{Proposition} \theoremstyle{definition} \newtheorem{defn}{Definition}[section] 
	
	\newtheorem{oss}[thm]{Remark}
	\newtheorem{ex}{Example}[section]
	\newtheorem{lemma}{Lemma}[section]
	
	\title{Flocking and Mean-Field Analysis of Delayed Leader-Follower Cucker-Smale System}
	\author{Chiara Cicolani \footnote{ Email: chiara.cicolani@graduate.univaq.it. } \\Dipartimento di Ingegneria e Scienze dell'Informazione e Matematica\\
		Universit\`{a} degli Studi di L'Aquila\\
		Via Vetoio, Loc. Coppito, 67100 L'Aquila Italy}
	\date{}

	\maketitle

	\begin{abstract}
Inspired by \cite{CCP}, we investigate a delayed leader-follower Cucker-Smale model describing the collective dynamics of interacting agents subject to communication lags. We first study the particle dynamics and establish sufficient conditions ensuring the emergence of asymptotic flocking. Our analysis shows that velocity alignment and bounded spatial dispersion persist despite the presence of delays and heterogeneous interactions between leaders and followers. We then derive and analyze two continuum descriptions of the system. In the first regime, the number of leaders is kept fixed while the number of followers tends to infinity, leading to a hybrid particle-kinetic model. In the second regime, both populations become infinitely large, yielding a fully kinetic delayed leader-follower model. For both mean-field formulations, we prove global existence, uniqueness, and Wasserstein stability of measure-valued solutions. These results provide a rigorous mathematical framework for the study of collective dynamics with leadership and memory effects and establish a bridge between delayed flocking models and their continuum counterparts.
\end{abstract}

	\providecommand{\keywords}[1]{\textbf{Keywords:} #1}
	\keywords{Cucker-Smale model; leader-follower dynamics; distributed time-delay; flocking behaviour; kinetic description; mean-field limit.}
	
	\vspace{5 mm}
	
	\section{Introduction}\label{introduction}
	Thanks to the large number of possible applications in several scientific fields, in recent years, the analysis of multi-agent systems has become a very attractive topic. They naturally appear in in biology \cite{Cama, Carrillo, CS1}, ecology \cite{Sole}, economics \cite{Marsan}, social sciences \cite{Bellomo, Castellano, Dolfin}, physics \cite{DH, Stro}, control theory \cite{Almeida2, D, PRT}, engineering and robotics \cite{Bullo, Desai}. In the context of multi-agent systems, one prominent example is the Cucker-Smale model \cite{CS1}, which was introduced as a model for language evolution. Later, this model has been extensively studied to understand how the flocking of animals occurs.  Several generalizations of this model have been proposed by many authors (see e.g. \cite{Cic_Con_Pign, ContPign, DH, Ha1, Ha2, Haskovec, LW, PRT, PR, PT}), but one of the most natural extensions of this model concerns the analysis of the time-delayed interactions. Indeed, in many realistic settings, interactions between individuals are not instantaneous. The process of communication, deliberation, and decision-making unavoidably introduces a time delay effect. Such delays may stem from the finite time needed to receive and process information, or from asynchronous updates among agents. Incorporating these time delays into the dynamics of these models is crucial for better aligning the mathematical description with observed real-world behaviors. Of course, the presence of a delay in the interaction makes the problem more difficult to analyze, since even a small delay can destroy certain geometric features of the system.  \\
The Cucker-Smale model with time delays has been intensively analyzed, e.g., in \cite{CDH22, CH, CL, CP, Cont, DHK20, Cartabia}.
For well-posedness results for alignment models in the presence of time delay effects, we refer to classical texts on functional differential equations \cite{Halanay, Hale}. The present work is inspired by \cite{CCP}, where the authors analyzed a time-delayed opinion dynamics with leader-follower interactions, and proved the convergence to the mean-field limit description in two different frameworks: one with a mixed limit in which the number of followers tends to infinity while the number of leaders remains fixed, and the second with a full mean-field limit obtained when both populations become infinitely large. However, the mathematical framework considered here is substantially different. While the latter concerns a first-order opinion-formation model, we investigate a second-order delayed Cucker-Smale system, in which agents are characterized by both positions and velocities. Consequently, the asymptotic behavior is no longer described solely in terms of consensus of opinions, but rather through the emergence of flocking, requiring the simultaneous alignment of velocities and control of the spatial dispersion of the group.\\
The presence of delayed interactions in a second-order setting introduces additional analytical difficulties, since the evolution of positions and velocities are strongly coupled through the delayed alignment mechanism. To address these issues, we establish sufficient conditions guaranteeing asymptotic flocking of the leader-follower system and derive quantitative estimates on the propagation of the leaders' influence throughout the network. \\
Another significant difference concerns the treatment of memory effects. The delayed opinion dynamics model considered in \cite{CCP} relies on costant delay. In contrast, the framework developed here naturally admits an extension to distributed delays, allowing agents to react to a weighted average of past observations. This leads to interaction operators involving memory kernels and nonlocal dependence in time, providing a more realistic description of information processing and communication mechanisms. \\
At the kinetic level, as in \cite{CCP}, we investigate two distinct mean-field regimes. For both continuum models, we prove existence, uniqueness, and Wasserstein stability of measure-valued solutions. Using an adaptation of the stability techniques developed in this work, we further show that the complete mean-field model inherits the same well-posedness properties. These results provide a rigorous continuum theory for delayed leader-follower flocking dynamics and extend the available mean-field analysis beyond the first-order opinion formation framework.

Consider two finite sets of $N\in\N$ and $m \in \N$ particles, with $N,m\geq 2 $. The set of multiplicity $m$ gives the group of the leaders, and the set of multiplicity $N$ represents the followers. Let $y_{j}(t) \in \RR^d$ and $w_i(t) \in \RR^d$ be the position and the velocity of the $j$-th leader at time $t,$ with $j=1,\dots,m.$ Analogously, let us identify the position and the velocity of the $i$-th follower at time $t $ with $x_{i}(t)\in \RR^d$ and $v_{i}(t)\in \RR^d$, respectively, with $i=1,\dots,N$.    We shall denote with $\lvert\cdot \rvert$ and $\langle \cdot,\cdot \rangle$ the usual norm and scalar product in $\RR^{d}$, respectively. The interactions between the elements of the system are described by the following equations,
	\begin{equation}\label{leaders}
		\begin{cases}
			\frac{d}{dt}y_{i}(t)=w_{i}(t),\quad &t>0, \,\,\forall i=1,\dots,m,\\
			\frac{d}{dt}w_{i}(t)=\frac{1}{m}\sum_{j=1}^m \int_{t-\tau_1(t)}^{t}  G(t-s)c_{ij}(t,s)(w_{j}(s)-w_{i}(t)) ds,\quad 	&t>0,\,\,\forall i=1,\dots,m, 
		\end{cases}
	\end{equation}
	
	\begin{equation}\label{followers}
		\begin{cases}
			\frac{d}{dt}x_{i}(t)=v_{i}(t),\quad &t>0, \,\,\forall j=1,\dots,N,\\
			\frac{d}{dt}v_{i}(t)=\frac{1}{N}\sum_{j=1}^N \int_{t-\tau_2(t)}^{t} G(t-s)a_{ij}(t,s)(v_{j}(s)-v_{i}(t)) ds \\
			\hspace{1.5cm}+\frac{1}{m}\sum_{j=1}^m \int_{t-\tau_1(t)}^{t}  G(t-s)b_{ij}(t,s)(w_{j}(s)-v_{i}(t)) ds,\quad 	&t>0,\,\,\forall i=1,\dots,N.
		\end{cases}
	\end{equation}
	
In this formulation, the interaction between the agents is delayed, accordingly to a distributed delay law. Indeed, the agents are influenced by the mean weighted value of the past states. Moreover, the length of the memory can change, accordingly to two time delay functions $\tau_1(t),\tau_2(t): [0,+\infty) \to (0,+\infty),$ corresponding to the reaction time of the leaders, and the reaction time of the followers, respectively. We assume that the time delay functions have a lower and an upper bound, i.e. there exist two positive costant $\tau^*$ and $\bar\tau,$ such that 
\begin{equation*}\label{delay_ass}
\tau^* \leq \tau_1(t),\tau_2(t) \leq \bar\tau, \quad \forall \ t \geq 0.
\end{equation*}
The functions $G(\cdot)\geq 0$ represents the memory kernel, that  satisfy the following: 
\begin{equation}\label{memory_cond}
\int_{t-\tau_i(t)}^{t}G(t-s) ds \leq 1,
\end{equation} 
for all $t \geq 0$ and $i=1,2.$ \\
Here, the communication rates that describes the strenght of the interaction are defined by the functions
	\begin{equation}\label{weightcs}
	\begin{split}
	& a_{ij}(t,s):=\phi(\lvert x_{i}(t)-x_{j}(s)\rvert), \quad t>0,\, \forall i,j=1,\dots,N, \\
	& b_{ij}(t,s):=\rho(\lvert x_{i}(t)-y_{j}(s)\rvert), \quad t>0,\, \forall i=1,\dots,N, \ j=1,\dots,m \\
		& c_{ij}(t,s):=\psi(\lvert y_{i}(t)-y_{j}(s)\rvert), \quad t>0,\, \forall i,j=1,\dots,m, \\
		\end{split}
	\end{equation}
	where $\psi,\phi,\rho :[0, +\infty)\rightarrow \RR$ are positive, uniformely bounded, and Lipschitz continuous functions. Let us call  
\begin{equation} \label{K}
 K:=\max \left\{ \lVert  \psi\rVert_{\infty}, \lVert \phi\rVert_{\infty}, \lVert \rho\rVert_{\infty}\right\}.
\end{equation}
The initial conditions
	\begin{equation}\label{incondcs_leaders}
		y_{i}(s)=y^{0}_{i}(s),\quad w_{i}(s)=w^{0}_{i}(s), \quad \forall s\in [-\bar\tau,0],\,\forall i=1,\dots,m,
	\end{equation}
	and 
	\begin{equation}\label{incondcs_followers}
		x_{i}(s)=x^{0}_{i}(s),\quad v_{i}(s)=v^{0}_{i}(s), \quad \forall s\in [-\bar\tau,0],\,\forall i=1,\dots,N,
	\end{equation}
 are assumed to be continuous functions.
	\\
We set
	\begin{equation}\label{C0}
		C^V_{0}:=\max_{s \in [-\bar\tau,0]} \left\{\max_{i=1,\dots,m}\,\,\lvert w_{i}(s)\rvert,\max_{i=1,\dots,N}\,\,\lvert v_{i}(s)\rvert\right\},
	\end{equation} 
	\begin{equation}\label{MX}
	\begin{split}
		&M_0^{X}:=\max_{s,t \in [-\bar\tau,0]} \Big\{\max_{i=1,\dots,m}\lvert y_{i}(s)-y_i(t)\rvert,\max_{i=1,\dots,N}\lvert x_{i}(s)-x_i(t)\rvert, \\
		& \hspace{7 cm} \max_{i=1,\dots,m}\,\, \max_{j=1,\dots,N}\lvert y_{i}(s)-x_i(t)\rvert\Big\}.
		\end{split}
	\end{equation}
	We define the space and velocity diameters as follows 
	\begin{equation}\label{diamX}
	\begin{split}
	& d_{X}(t):=\max\Big\{\max_{i,j=1,\dots,m}\lvert y_{i}(t)-y_{j}(t)\rvert, \max_{i,j=1,\dots,N}\lvert x_{i}(t)-x_{j}(t)\rvert, \\
	& \hspace{7 cm} \max_{i=1,\dots,m} \max_{j=1,\dots,N}\lvert y_{i}(t)-x_{j}(t)\rvert\Big\},
	\end{split}
	\end{equation}
	
	\begin{equation}\label{diamV}
	\begin{split}
 & d_{V}(t):=\max \Big\{\max_{i,j=1,\dots,m}\lvert w_{i}(t)-w_{j}(t)\rvert, \max_{i,j=1,\dots,N}\lvert v_{i}(t)-v_{j}(t)\rvert, \\
 & \hspace{7cm} \max_{i=1,\dots,m}\max_{j=1,\dots,N}\lvert w_{i}(t)-v_{j}(t)\rvert\Big\},
 \end{split}
	\end{equation}
	
		for all $t\geq -\bar\tau.$ 
	\begin{defn} \label{unflock} 
We say that a solution $\{(y_i,w_i)_{i=1}^m,(x_{j},v_{j})_{j=1}^N\}$ to system \eqref{leaders}-\eqref{followers} exhibits \textit{asymptotic flocking} if it satisfies the two following conditions:
		\begin{enumerate}
			\item there exists a positive constant $d^{*}$ such that$$\sup_{t\geq-{\bar\tau}}d_{X}(t)\leq d^{*};$$
			\item$\underset{t \to\infty}{\lim}d_{V}(t)=0.$
		\end{enumerate}
	\end{defn}

Our main result is the following.
	\begin{thm} \label{uf}
Let $\psi, \phi, \rho: [0, +\infty)\rightarrow\RR$ be positive, bounded, and Lipschitz continuous functions that satisfies
		\begin{equation}\label{infint}
			\int_{0}^{+\infty}\left(\min_{ \sigma \in  [0,r]}\left\{ \psi(\sigma), \phi(\sigma), \rho(\sigma)\right\}\right) dr =+\infty.
		\end{equation}
 Moreover, let $x^{0}_{i},v_{i}^{0}:[-{\bar\tau},0]\rightarrow \RR^{d}$ and $y_j^0, w_j^0:[-\bar\tau,0]\rightarrow \RR^d$ be continuous functions for any $i=1,\dots,N$ and $j=1,\dots,m.$ 
Then, for every solution $\{(y_j, w_j)\}_{j=1,\dots,m}$and $\{(x_{i},v_{i})\}_{i=1,\dots,N}$ to \eqref{leaders} and \eqref{followers} with the initial conditions \eqref{incondcs_leaders} and \eqref{incondcs_followers} respectively, there exist a positive constant $d^{*}$ such that \begin{equation}\label{posbound}
			\sup_{t\geq-{\bar\tau}}d_{X}(t)\leq d^{*},
		\end{equation}
		and the following exponential decay estimate holds
		\begin{equation}\label{vel}
			d_{V}(t)\leq C e^{-\gamma(t-2\bar\tau)},\quad \forall t\geq 0,
		\end{equation}
		where $C$ and $\gamma$ are suitable positive constants, and $\bar\tau$ the size of the time delay.
 \end{thm}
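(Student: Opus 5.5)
We follow the strategy used for delayed Cucker--Smale systems with distributed delay (Pignotti--Reche Vallejo, Cont et al.), treating leaders and followers as one ensemble of $m+N$ agents with heterogeneous weights.

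\emph{Step 1: velocity bounds.} We first show that every velocity stays in the ball of radius $C^V_0$. Fix a unit vector $e$ and put
\[
M(t)=\max\Big\{\max_{s\in[t-\bar\tau,t]}\max_i\langle w_i(s),e\rangle,\ \max_{s\in[t-\bar\tau,t]}\max_i\langle v_i(s),e\rangle\Big\}.
\]
Suppose some component $\langle v_i(t),e\rangle$ reaches $M(t)+\epsilon$ at a first time. Every term $\langle w_j(s)-v_i(t),e\rangle$ and $\langle v_j(s)-v_i(t),e\rangle$ appearing in \eqref{followers} is then nonpositive, because the weights are nonnegative and \eqref{memory_cond} holds. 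This contradicts the growth of that component, and the same holds for the leaders. Hence $M$ is nonincreasing, $|w_i|,|v_i|\le C^V_0$, and consequently $d_X(t)\le M^X_0+2C^V_0 t$ and $|y_i(t)-y_i(s)|,|x_i(t)-x_i(s)|\le C^V_0\bar\tau$ for $|t-s|\le\bar\tau$.

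\emph{Step 2: contraction of the velocity diameter over a window.} Define a delayed diameter $D_V(n)=\max_{s\in[(n-1)2\bar\tau,\,n2\bar\tau]}d_V(s)$, possibly using windows of length $2\bar\tau$, which matches the shift $e^{-\gamma(t-2\bar\tau)}$ in \eqref{vel}. Let $\varphi(r)=\min_{[0,r]}\{\psi,\phi,\rho\}$. While $d_X\le R$ on the relevant window, every weight satisfies $a_{ij},b_{ij},c_{ij}\ge\varphi(R+C^V_0\bar\tau)$. We also need a positive lower bound $\int_{t-\tau_i(t)}^t G\ge g_*>0$; this follows if $G>0$ near $0$ and $\tau_i\ge\tau^*$. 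With these bounds we write each velocity ODE as a convex-combination relaxation toward a weighted average of past velocities with coefficient at least $g_*\varphi$. Comparing the extreme projections $\langle\cdot,e\rangle$ as in Step 1 then gives
\[
D_V(n+1)\le\big(1-c\,\varphi(R_n)\big)D_V(n),\qquad c=c(\bar\tau,K,g_*)>0.
\]

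The main obstacle is that the leaders' equation \eqref{leaders} is not influenced by the followers. The contraction therefore has to be done in two stages: first the leaders' diameter contracts among themselves, then followers contract toward the leaders' hull and each other. This is why two delay windows ($2\bar\tau$) are needed, and the mixed leader--follower term in $d_V$ must be handled by combining both stages.

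\emph{Step 3: Lyapunov-type bootstrap.} We pass to continuous time, for instance $\frac{d}{dt}$ of a suitably delayed $\mathcal D_V$ in the Dini sense, and obtain
\[
D^+ \mathcal D_V(t)\le -c\,\varphi(d_X(t)+C)\,\mathcal D_V(t),\qquad |D^+ d_X(t)|\le d_V(t).
\]
We then form the functional $\mathcal L(t)=\mathcal D_V(t)+c\int_0^{d_X(t)+C}\varphi(r)\,dr$, which is nonincreasing. Hence $\int_{d_X(0)+C}^{d_X(t)+C}\varphi\le \mathcal D_V(0)/c$. By \eqref{infint} the left side diverges as its upper limit grows, so $d_X(t)\le d^*$ uniformly in $t$, which is \eqref{posbound}. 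Then $\varphi(d_X+C)\ge\varphi(d^*+C)>0$, and Gronwall yields \eqref{vel} with $\gamma=c\,\varphi(d^*+C)$ and $C$ proportional to $\max_{[-\bar\tau,2\bar\tau]}d_V\le 2C^V_0$.
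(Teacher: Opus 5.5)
Your skeleton (maximum principle for projections, the bound $C^V_0$, a window-to-window contraction with rate governed by $\min\{\psi,\phi,\rho\}$ on a range fixed by the position diameter, a functional ``diameter $+\int\varphi$'', then \eqref{infint}) is the paper's. The gap is Step 2, where the proof actually lives: it is only asserted, and as set up it fails. The maximum principle controls $M^v_n-m^v_n$, the spread of projections over all agents \emph{and} all times of a window. That is the cross-time diameter $D_n$ of Definition \ref{Dn}. Your $D_V(n)=\max_s d_V(s)$ does not dominate it and cannot contract. For example, if all agents share a common non-constant velocity $u(s)$ on $[-\bar\tau,0]$, then $d_V\equiv 0$ there, yet $d_V(t)>0$ for small $t>0$. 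This already happens with $\psi=\phi=\rho\equiv1$: a follower feels the past with weight $\int_{t-\tau_2(t)}^tG+\int_{t-\tau_1(t)}^tG$, a leader only with $\int_{t-\tau_1(t)}^tG$.

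Nor is a two-stage argument the missing idea. The paper proves two things. (a) The bridge \eqref{eq20}, $D_{n+1}\le e^{-2K\bar\tau}d_V(n\bar\tau)+(1-e^{-2K\bar\tau})D_n$, from one-sided Gr\"onwall bounds on projections. (b) $d_V(n\bar\tau)\le C(n)D_{n-2}$, treating follower--follower, leader--leader and follower--leader pairs separately. The key point in (b) is that every agent, leader or follower, receives the leader input with weight at least $\Lambda(n\bar\tau)$. So for any pair, the leader terms $\frac{\Lambda}{m}\sum_l\int G\,(\langle w_l(s),v\rangle-M^v_{n-1})\,ds$ of the upper agent and $\frac{\Lambda}{m}\sum_l\int G\,(m^v_{n-1}-\langle w_l(s),v\rangle)\,ds$ of the lower agent add up to $-\Lambda(M^v_{n-1}-m^v_{n-1})\int G$. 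In the mixed case the follower's follower terms are nonpositive and are simply dropped. The one-way hierarchy is therefore no obstacle. The shift $2\bar\tau$ in \eqref{vel} comes from the three-step recursion $D_{n+1}\le(1-\Gamma_n)D_{n-2}$, not from the hierarchy.

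Your extra hypothesis $\int_{t-\tau_i(t)}^tG(t-s)\,ds\ge g_*>0$ is genuinely needed: $G\equiv 0$ satisfies \eqref{memory_cond} and gives no flocking. The paper uses it tacitly. In Lemma \ref{6}, summing the $\Lambda$-terms of $S_1$ and $S_2$ gives $-\Lambda(n\bar\tau)(M^v_{n-1}-m^v_{n-1})\bigl(\int_{t-\tau_1(t)}^tG(t-s)\,ds+\int_{t-\tau_2(t)}^tG(t-s)\,ds\bigr)$. The stated bound $2(K-\Lambda(n\bar\tau))(M^v_{n-1}-m^v_{n-1})$ then needs both integrals equal to $1$; with your $g_*$ one replaces $\Lambda$ by $g_*\Lambda$.

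Step 3 also does not follow as written. A window recursion gives no pointwise inequality $D^+\mathcal D_V\le -c\varphi\,\mathcal D_V$. You must interpolate the discrete recursion by a continuous nonincreasing majorant; the paper's $\mathcal D$ does this, though as written it jumps up at $t=3\bar\tau$ and needs adjusting. Moreover, $\varphi$ must be evaluated at $\bar\tau C^V_0+M^X_0+2\max_{[-\bar\tau,t]}d_X$ as in \eqref{lambda}, not at $d_X(t)+C$. The weight bound must hold over a whole window, including $s<0$, where $M^X_0$ enters because initial positions and velocities are independent data. The running maximum is also what makes the derivative of the $\int\varphi$-term at most $2\Lambda(t)d_V(t)$, with the same nonincreasing $\Lambda$ that governs the decay.
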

 
	\begin{oss}\label{condizioneintegrale}
	Let us note that, the condition \eqref{infint} is necessary in order to obtain unconditional flocking (see e.g. \cite{Cartabia}). Since here we deal with influence functions that are not necessarily monotonic and a hierarchical structure,  we require the stronger assumption \eqref{infint} (cf. \cite{Cont} for the case without leadership).
\end{oss}
 
To understand the collective behavior of a large number of interacting agents, we next study the mean-field limit of the delayed particle system introduced by the models \eqref{leaders} and \eqref{followers}. The goal of this analysis is twofold: first, to derive macroscopic equations that describe the evolution of the system when the number of agents becomes large; and second, to establish flocking results at the mean-field level.

We investigate two distinct asymptotic regimes that reflect different population structures.
\medskip

\noindent{\it Case (i): few leaders and many followers.} In this regime, the number of leaders $m$ remains fixed, while the number of non-leaders $N$ tends to infinity. This setup reflects situations in which a small number of individuals shape the dynamics of a much larger population. In the mean-field limit, the leaders retain their finite-dimensional dynamics, whereas the non-leaders are described by a probability density $\nu_t$ governed by a Vlasov type kinetic equation of the form 
	\begin{equation}\label{pde1}
		\partial_t f_t + v \cdot \nabla_x f_t + \nabla_v \cdot \left(F^m[f_t](x,v)f_t\right)=0, \quad x,v \in \mathbb{R}^d, \ t >0,
	\end{equation}
with the initial data
\[%\begin{equation}\label{init1}
(y_i(s), f_s) =: (y_i^0(s), \nu_s), \quad i=1,\dots, m,  \quad \mbox{for } s \in [-\bar\tau,0].
\]%\end{equation}
The mean-field interaction potential for the followers density is given by
\begin{equation}\label{flux1}
\begin{split}
& F^m[f_t](x,v) := \int_{t-\tau_2(t)}^t G(t-s) \int_{\mathbb{R}^{2d}} \phi(|x-\bar{x}|)(\bar{v}-v)f_{s}(d\bar{x},d\bar{v}) \ ds \\
& \hspace{5cm} +  \frac{1}{m} \sum_{j=1}^{m} \int_{t-\tau_1(t)}^t G(t-s) \rho(|x- y_j(s)|) \bigl( w_j(s)-v\bigr) ds.
\end{split}
\end{equation}

In this formulation, the leaders remain finite-dimensional agents evolving under delayed mutual interactions, while the non-leader population evolves continuously in time and space under the influence of both the leader group and its own internal dynamics. This hybrid description allows for a tractable yet rich model of hierarchical opinion dynamics.
\medskip

\noindent{\it Case (ii): infinite population limit for both leaders and followers.} In this fully macroscopic regime, both groups are described by probability densities. The mean-field limit then yields a pair of kinetic equations:
\begin{equation}\label{pde2}
\begin{array}{l}
\displaystyle{
 \partial_t g_t + w \cdot \nabla_y g_t +\nabla_w \cdot \left( F[g_t](y,w)g_t\right)= 0, \quad y,w \in \mathbb{R}^d, \quad t>0,} \\[2mm]
\displaystyle{
 \partial_t f_t + v \cdot \nabla_x f_t + \nabla_v \cdot (F[f_t](x,v)f_t) = 0, \quad x,v \in \mathbb{R}^d, \quad t>0,}
\end{array}
\end{equation}
subject to the initial data
\begin{equation}\label{init2}
(g_s, f_s) =: (\bar\mu_s, \bar \nu_s), \quad \mbox{for } s \in [-\bar\tau,0].
\end{equation}
Here, the  mean-field interaction potentials are defined as
\begin{align}\label{flux2}
F[g_t](y,w)&:= \int_{t-\tau_1(t)}^t G(t-s)\int_{\mathbb{R}^{2d}} \psi(|y-\bar{y}|)(\bar{w}-w)\,g_{s}(d \bar{y},d \bar{w}) \ ds,  \\ \label{flux3}
F[f_t](x,v)&:= \int_{t-\tau_2(t)}^t G(t-s) \int_{\mathbb{R}^{2d}} \phi(|x-\bar{x}|)(\bar{v}-v)\,f_{s}(d \bar{x}, d \bar{v}) \cr
& \hspace{1.5cm}+ \int_{t-\tau_1(t)}^t G(t-s) \int_{\mathbb{R}^{2d}} \rho(|x-\bar{y}|)(\bar{w}-v)\,g_{s}(d \bar{y}, d \bar{w}) \ ds. 
\end{align}
This fully macroscopic description is particularly useful for analyzing large-scale patterns and stability properties of the system when the number of interacting agents is extremely high.\\

To study convergence and decay to consensus in these mean-field models, we introduce diameter-like quantities that measure the spread of the distributions. We refer to $\operatorname{supp}_X(\cdot)$ and $\operatorname{supp}_V(\cdot)$ as the support of the density with respect to space and velocity respectively. 

In Case (i), we define:
\[%\begin{equation}\label{d_nu}
\begin{split}
& d_X^{f}(t):= \max \left\{ \sup_{x,y \in \operatorname{supp}_X(f_t)}|x-y|, \; \max_{i,j=1,\dots,m}|y_i(t)-y_j(t)|, \;  \max_{i=1,\dots,m}\sup_{x \in \operatorname{supp}_X(f_t)}|y_i(t)- x| \right\}, \\
& d_V^{f}(t):= \max \left\{ \sup_{v,w \in \operatorname{supp}_V(f_t)}|v-w|, \; \max_{i,j=1,\dots,m}|w_i(t)-w_j(t)|, \;  \max_{i=1,\dots,m}\sup_{v \in \operatorname{supp}_V(f_t)}|w_i(t)- v| \right\}, 
\end{split}
\]%\end{equation}
In Case (ii), the diameters becomes: 
\[%\begin{equation}\label{d_mu_nu}
\begin{split}
& d_X^{f, g}(t):= \max \left\{ \sup_{x,y \in \operatorname{supp}_X(g_t)}|x-y|, \;  \sup_{x,y \in \operatorname{supp}_X(f_t)}|x-y|,  \; \sup_{\substack{x \in \operatorname{supp}_X(g_t),\\ y \in \operatorname{supp}_X(f_t)}}|x-y| \right\}, \\
& d_V^{f, g}(t):= \max \left\{ \sup_{v,w \in \operatorname{supp}_V(g_t)}|v-w|, \;  \sup_{v,w \in \operatorname{supp}_V(f_t)}|v-w|,  \; \sup_{\substack{v \in \operatorname{supp}_V(g_t),\\ w \in \operatorname{supp}_V(f_t)}}|v-w| \right\}
\end{split}
\]%\end{equation}
To measure initial velocity discrepancies, we define:
\[%\begin{equation} \label{gen_diam}
\begin{split}
D^f_0 &:= \max_{s,t \in [-\bar\tau,0]}  \left\{ \sup_{ \substack{v \in \operatorname{supp}_V(\nu_s),\\ w \in \operatorname{supp}_V(\nu_t)}}|v-w|, \;  \max_{i,j=1,\dots,m}|w_i(s)-w_j(t)|, \;\max_{i=1,\dots,m}\sup_{v \in \operatorname{supp}_V(\nu_t)}|w_i(s)- v| \right\}, \\
D^{f, g}_0 &:= \max_{s,t \in [-\bar\tau,0]}  \left\{ \sup_{\substack{v \in \operatorname{supp}_V(\bar \mu_s),\\ w \in \operatorname{supp}_V(\bar \mu_t)}}|v-w|, \; \sup_{\substack{v \in \operatorname{supp}_V(\bar \nu_s),\\ w \in \operatorname{supp}_V(\bar \nu_t)}}|v-w|,  \; \sup_{ \substack{v \in \operatorname{supp}_V(\bar \mu_s),\\  w \in \operatorname{supp}_V(\bar \nu_t)}}|v-w| \right\}.
\end{split}
\]%\end{equation}
\medskip
 
We now recall the standard notions of push-forward and measure-valued solutions to make the above formulations precise. Let us denote $\mathcal{P}(\mathbb{R}^{2d})$ as a set of probability measures in $\mathbb{R}^{2d},$ endowed with the $1-$Wasserstein distance, defined as follows.

\begin{defn}\emph{(Monge-Kantorovich-Rubistein distance).}
Let $\nu_t^1,\nu_t^2 \in \mathcal{P}(\mathbb{R}^d)$ two probability measures in $\mathbb{R}^d.$ We define the Monge-Kantorovich-Rubistein distance, also called $1-$Wasserstein distance, between $\nu_t^1$ and $\nu_t^2$ the quantity 
$$ d_1(\nu_t^1,\nu_t^2):= \inf _{\pi \in \Pi(\nu_t^1,\nu_t^2)} \left( \int_{\mathbb{R}^d \times \mathbb{R}^d} \vert x-y \vert \pi(d x,d y) \right),$$
where $\pi \in \Pi(\nu_t^1, \nu_t^2)$ is the set of transference plans, i.e. probability measures $\pi \in \mathbb{R}^d \times \mathbb{R}^d$ with marginals $\nu^1_t$ and $\nu^2_t,$
$$ \int_{\mathbb{R}^{2d}} \xi(x)\pi(dx,dy)=\int_{\mathbb{R}^{d}}\xi(x)\nu_t^1(dx),$$
and
$$ \int_{\mathbb{R}^{2d}}\xi(y)\pi(dx,dy)=\int_{\mathbb{R}^{d}}\xi(y)\nu_t^2(dy).$$
\end{defn}

Notice that $\mathcal{P}(\mathbb{R}^{2d})$ endowed with the Monge-Kantorovich-Rubistein distance is a complete metric space. Moreover, the Monge-Kantorovich-Rubistein distance is equivalent  to the \emph{Bound Lipschitz distance}, i.e. for any $\nu_t^1, \nu_t^2 \in \mathcal{P}(\mathbb{R}^{2d}),$
$$ d_1(\nu_t^1,\nu_t^2)= \sup \left\{ \left\vert \int_{\mathbb{R}^d}\varphi(\xi)\nu_t^1(d\xi)-\int_{\mathbb{R}^d}\varphi(\xi)\nu_t^2(d\xi)\right\vert \ ; \varphi \in Lip(\mathbb{R}^d), \ Lip(\varphi) \leq 1 \right\},$$
where $Lip(\mathbb{R}^d)$ denotes the set of Lipschitz function in $\mathbb{R}^d,$ and $Lip(\varphi)$ is the Lipschitz constant of $\varphi \in Lip(\mathbb{R}^d).$

\begin{defn}\label{push}\emph{(Push-forward).}
Let $\mu \in \mathcal{P}(\mathbb{R}^{2d})$ be a Borel measure and $\mathcal{T}:\mathbb{R}^{2d} \rightarrow \mathbb{R}^{2d}$ be a measurable map. The push-forward of $\mu$ by $\mathcal{T}$ is the measure $\mathcal{T} \# \mu$ defined by
$$ \mathcal{T} \# \mu (B):= \mu (\mathcal{T}^{-1}(B)),$$
for every Borel sets $B \subset \mathbb{R}^{2d}.$
\end{defn}

\begin{defn}\label{solution}\emph{(measure-valued solution).}
Let $T>0,$ we say that $\mu_t \in C([0,T);\mathcal{P}(\mathbb{R}^{2d}) )$ is a measure-valued solution to a kinetic equation of the form \eqref{pde1} or \eqref{pde2} if for every $\phi \in C_c^{\infty}(\mathbb{R}^{2d}),$ and for a.e. $t \in [0,T),$ the following weak formulation holds:
\begin{equation}\label{measure}
\frac{d}{d t} \int_{\mathbb{R}^{2d}} \phi(x,v)  \mu_t(dx, dv)= \int_{\mathbb{R}^{2d}} \left( v \cdot \nabla_x \phi + F \cdot \nabla_v \phi \right) \mu_t(dx, dv),
\end{equation}
where $F$ is the mean-field interaction potential defined as \eqref{flux1}, \eqref{flux2}, or \eqref{flux3}, depending on the case.
\end{defn}

\begin{thm}\label{limit_result}
Assume that the initial data for the particle system \eqref{pde1} and \eqref{pde2} satisfy one of the following:
\begin{enumerate} 
    \item[ ] \textbf{Case (i) - few leaders and many non-leaders:} The leader initial data are given by 
    \[
    y_i^0 \in C([-\bar\tau, 0]), \quad i=1,\dots,m,
    \]
    and the non-leader initial distribution is 
    \[
    \nu_s \in C\bigl([-\bar \tau,0]; \mathcal{P}(\mathbb{R}^{2d})\bigr).
    \]
    \item[ ] \textbf{Case (ii): infinite population limit for both leaders and non-leaders:} The initial densities satisfy
    \[
    \bar{\mu}_s, \bar{\nu}_s \in C\bigl([-\bar\tau,0]; \mathcal{P}(\mathbb{R}^{2d})\bigr).
    \]
\end{enumerate}
Then, for any finite time $T>0$, the corresponding mean-field model admits a unique solution of equations \eqref{pde1} or \eqref{pde2} on the interval $[0,T)$ with the following regularity properties:
\begin{itemize}
    \item In Case (i), the leader trajectories $ y_i(t)$ belong to $C^1([0,T))$, and the non-leader distribution $f_t$ belongs to $C([0,T); \mathcal{P}(\mathbb{R}^{2d}))$, with uniformly compact support. Moreover, the measure-valued solution $f_t$ is the push-forward of the density $\nu_0$ through the flow map generated by $F^m[f_t](x,v).$
    
    \item In Case (ii), the measure-valued solutions $g_t$ and $f_t$ belong to $C([0,T);  \mathcal{P}(\mathbb{R}^{2d}))$, with uniformly compact support, and they are push-forward of the density $\bar{\mu}_0$ and $\bar{\nu}_0$ through 
 are the flow maps associated with $F[g_t](y,w)$ and $F[f_t](x,v)$, respectively.
\end{itemize}
Moreover, the unconditional flocking holds for the two kinetic description, with an exponential decay estimate for the velocity diameters.
\end{thm}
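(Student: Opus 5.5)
The plan is to obtain the measure-valued solutions as limits of empirical measures of the particle system \eqref{leaders}--\eqref{followers}, with Wasserstein stability in the sense of Dobrushin, and then to transfer the flocking estimates of Theorem \ref{uf} to the kinetic level.

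\textbf{Step 1: uniform a priori bounds.} Suppose the initial data have compact support in $[-\bar\tau,0]$. Because of \eqref{memory_cond}, each velocity component is a convex-type combination of past velocities. Hence the maximum speed is nonincreasing, $|w_i(t)|,|v_i(t)|\le C_0^V$, and positions grow at most linearly: $|x_i(t)|\le R_X+C_0^V t$. These bounds do not depend on $N$ or $m$. They give a compact set $B_T\subset\mathbb{R}^{2d}$ containing every support on $[0,T]$.

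\textbf{Step 2: well-posedness of the characteristic system.} Take a fixed history, i.e. a curve $s\mapsto f_s$ in $C([-\bar\tau,T];\mathcal P(\mathbb{R}^{2d}))$ supported in $B_T$, together with leader trajectories in Case (i) or a curve $g_s$ in Case (ii). For such data the fields $F^m[f]$, $F[g]$, $F[f]$ are bounded and Lipschitz in $(x,v)$ on $B_T$, with constants depending only on $K$, $\mathrm{Lip}(\psi,\phi,\rho)$ and $C_0^V$. The characteristic ODE therefore has a unique flow $\mathcal T_t$.

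We then define the map $\Gamma(f)_t=\mathcal T_t\#\nu_0$; in Case (ii) this is done jointly with $g$ and $\bar\mu_0$. In Case (i) the leaders are coupled in as a finite-dimensional delayed ODE. The key estimate comes from the Kantorovich--Rubinstein duality:
\[
\sup_{(x,v)\in B_T}\bigl|F[f^1_s]-F[f^2_s]\bigr|\le L\, d_1(f^1_s,f^2_s).
\]
The same holds for the $\rho$-term, with either $|y^1_j-y^2_j|+|w^1_j-w^2_j|$ or $d_1(g^1_s,g^2_s)$ on the right-hand side. Gronwall applied to the flows, combined with $d_1(\mathcal T^1\#\nu,\mathcal T^2\#\nu)\le\|\mathcal T^1-\mathcal T^2\|_\infty$, gives
\[
\sup_{[0,t]}d_1(\Gamma f^1,\Gamma f^2)\le C\int_0^t\sup_{[-\bar\tau,r]}d_1(f^1,f^2)\,dr.
\]
This holds because the delays only look backward. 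Working with the weighted metric $\sup e^{-\lambda t}d_1$ makes $\Gamma$ a contraction, and the resulting fixed point is the unique push-forward solution. One checks directly that push-forward solutions satisfy the weak formulation \eqref{measure}.

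\textbf{Step 3: stability.} Running the same Gronwall argument with two different initial histories gives
\[
d_1(f^1_t,f^2_t)\le e^{CT}\sup_{s\in[-\bar\tau,0]}d_1(\nu^1_s,\nu^2_s),
\]
plus the leader and $g$ terms. Applying this to empirical measures shows that the particle system converges to the kinetic one. I expect the main obstacle to be the time-dependent delay intervals, which leads to two technical points. First, the $\sup$ over the history window $[t-\bar\tau,t]$ must be handled inside Gronwall. Second, continuity of $t\mapsto F$ requires showing that $\int_{t-\tau_i(t)}^t G(t-s)\,\cdot\,ds$ is continuous in $t$. I would simply assume $\tau_i$ and $G$ are continuous; Lipschitz dependence on the measure is not affected by this issue.

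\textbf{Step 4: flocking.} The estimates of Theorem \ref{uf} depend only on $C_0^V$, $M_0^X$, $K$, the delays and the kernels, and never on $N$ or $m$. This is because the proof bounds the diameters $d_X$ and $d_V$ via maximum-principle and convex-combination arguments. Approximating the initial data by empirical measures, the bounds $d_X\le d^*$ and $d_V\le Ce^{-\gamma(t-2\bar\tau)}$ hold uniformly in the approximating sequence. Support diameters are lower semicontinuous under $d_1$ convergence when supports stay in a fixed compact set. The estimates therefore pass to $d_X^f,d_V^f$ and to $d_X^{f,g},d_V^{f,g}$, with $D_0^f$ and $D_0^{f,g}$ playing the role of the initial velocity diameter. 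This gives flocking for both kinetic descriptions with exponential velocity decay.
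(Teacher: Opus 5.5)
Your proposal is correct, but for existence and uniqueness it takes a different route from the paper.

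\textbf{Existence and uniqueness.} The paper shows the fields are bounded and Lipschitz on balls (Lemmas \ref{lip} and \ref{lip2}) and appeals to an external well-posedness theorem for local existence. It then globalizes by a continuity argument on the velocity-support radius $R_V(t)$ along characteristics, iterated on windows of length $\tau^*$.

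You instead set up a self-contained Banach fixed point for $\Gamma(f)_t=\mathcal T_t[f]\#\nu_0$ on curves supported in an a priori set $B_T$. The Lipschitz dependence on the measure comes from Kantorovich--Rubinstein duality, and a weighted sup-metric absorbs the backward-looking delay. This gives existence on every $[0,T]$ at once and makes the push-forward representation and uniqueness automatic.

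Two small points should be made explicit:
\begin{itemize}
\item Step 2 needs $\Gamma$ to map curves with $\supp_V f_s\subset\{|v|\le C_0^V\}$ into themselves. That is not the particle statement of your Step 1, but it follows from the same maximum-principle computation along characteristics (the paper's $\frac12\frac{d}{dt}|V|^2$ estimate).
\item For stability with $\nu^1_0\neq\nu^2_0$ you need the additional term $\mathrm{Lip}(\mathcal T^1_t)\,d_1(\nu^1_0,\nu^2_0)$ from the triangle inequality, which your flow Gronwall provides. This route avoids transport maps. It is more robust than the optimal transport map $\mathcal S_0$ used in Lemma \ref{4.3}, which need not exist when $\nu^1_0$ is an empirical measure.
\end{itemize}

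\textbf{Flocking.} Here you follow the paper's strategy (particle approximation, stability, Theorem \ref{uf}), but you state two facts the paper leaves implicit or gets wrong.
\begin{itemize}
\item The constants $d^*$ and $\gamma$ of Theorem \ref{uf} depend only on bounds for $C_0^V$, $M_0^X$, $D_0$, $K$, $\bar\tau$ and the kernels. They are therefore uniform along the approximating sequence, provided the approximating particles are taken in the supports of the limit data (or their convex hulls), so that $D_0^N\le D_0^f$.
\item Support diameters are only lower semicontinuous under $d_1$ convergence, which is exactly the direction needed. The paper instead claims $d_V(t)=d_V^f(t)$ in the limit, which fails in general.
\end{itemize}
Approximating both populations at once in Case (ii), rather than passing through Case (i), is equally valid.

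The extra hypotheses you make explicit are tacitly used by the paper too: compactly supported initial data, and enough regularity of $\tau_i$ and $G$ for $t\mapsto F$ to be measurable or continuous. In particular, the paper's statement asserts uniformly compact support without assuming it initially.
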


Theorem \ref{limit_result} addresses both the well-posedness and large-time behavior of the mean-field systems derived from the interacting particle dynamics. The proof proceeds in two steps. First, we establish the existence and uniqueness of measure-valued solutions by constructing them as push-forwards of initial measures under characteristic flows. Second, to analyze the large-time behavior, we combine the exponential decay estimate for the particle system given in Theorem \ref{uf} with a quantitative mean-field limit argument. In both regimes, we prove that the macroscopic dynamics inherit the asymptotic property from their particle counterparts. This two-step approach highlights the robustness of flocking formation under time delays and scaling limits.\\
The rest of this paper is organized as follows. In Section \ref{uncontditional}, we provide the flocking result for the model with time-delayed interactions and leadership under appropriate assumptions. Section \ref{wellp} is devoted to the mean-field formulation and the rigorous construction of measure-valued solutions to the limiting system, including the proof of Theorem \ref{limit_result}. In Section \ref{stability}, we analyze the stability and asymptotic behavior of the mean-field system. We first derive stability estimates in the Wasserstein distance, and then rigorously establish the flocking behavior by combining stability with particle approximation techniques.

\section{Unconditional flocking dynamics}\label{uncontditional}
	\subsection{Preliminary lemmas}
	Let $(y_i,w_i)_{i=1}^m$ be solution of \eqref{leaders} and $(x_i, v_i)_{i=1}^N$ be solution to \eqref{followers}, under the initial conditions \eqref{incondcs_leaders} and \eqref{incondcs_followers}, respectively. We assume that the hypotheses of Theorem \ref{uf} are satisfied. To prove the unconditional flocking result, we present some auxiliary definitions and lemmas.
		\begin{defn}\label{quant}
		Given a vector $v\in \mathbb{R}^d$, for all $n\in \mathbb{N}_0$ we define
	
		$$m_n^v:=\min_{s\in [n\bar\tau-\bar\tau,n\bar\tau]} \Big\{\min_{i=1,\dots,m}\,\langle w_{i}(s),v\rangle, \min_{i=1,\dots,N}\,\langle v_{i}(s),v\rangle \Big\}, $$
		$$M_n^v:=\max_{s\in [n\bar\tau-\bar\tau,n\bar\tau]}\Big\{\max_{i=1,\dots,m}\,\langle w_{i}(s),v\rangle, \max_{i=1,\dots,N}\,\langle v_{i}(s),v\rangle\Big\}.$$
	\end{defn} 
	\begin{lem}\label{L1}
		For each vector $v\in \RR^{d}$, we have that 
		\begin{equation}\label{ineq0}
			m_0^v\leq \langle w_{i}(t),v\rangle \leq M_0^v, \quad \mbox{and} \quad m_0^v\leq \langle v_{j}(t),v\rangle \leq M_0^v,
		\end{equation}
		for all  $t\geq -\bar\tau$ and for any $i=1,\dots,m$ and $j=1,\dots,N$.
	\end{lem}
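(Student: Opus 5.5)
We prove the two inequalities for the upper bound $M_0^v$; the lower bound follows by applying the same argument to $-v$, since $m_0^{v}=-M_0^{-v}$. For $t\in[-\bar\tau,0]$ the claim holds by the definition of $M_0^v$. The plan is a continuity (barrier) argument. Fix $\varepsilon>0$ and set
\[
S^\varepsilon:=\Big\{t\geq 0:\ \max\Big\{\max_{i}\langle w_i(s),v\rangle,\max_j\langle v_j(s),v\rangle\Big\}<M_0^v+\varepsilon\ \ \forall s\in[0,t)\Big\}.
\]
By continuity of the solutions, $S^\varepsilon$ contains a right neighbourhood of $0$. Let $T^\varepsilon:=\sup S^\varepsilon$. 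We argue by contradiction, assuming $T^\varepsilon<\infty$. Then some agent reaches the level $M_0^v+\varepsilon$ at time $T^\varepsilon$, and every agent stays below or at this level on $[-\bar\tau,T^\varepsilon]$.

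The key step is a differential inequality. For a leader $i$ and $t\in(0,T^\varepsilon)$, take the scalar product of the velocity equation in \eqref{leaders} with $v$:
\[
\frac{d}{dt}\langle w_i(t),v\rangle=\frac1m\sum_{j=1}^m\int_{t-\tau_1(t)}^t G(t-s)c_{ij}(t,s)\big(\langle w_j(s),v\rangle-\langle w_i(t),v\rangle\big)\,ds .
\]
Since $s\in[t-\bar\tau,t]\subset[-\bar\tau,T^\varepsilon)$, we have $\langle w_j(s),v\rangle< M_0^v+\varepsilon$. Using $G\geq0$, $0< c_{ij}\le K$, and \eqref{memory_cond}, we obtain
\[
\frac{d}{dt}\langle w_i(t),v\rangle\le K\big(M_0^v+\varepsilon-\langle w_i(t),v\rangle\big)\ \text{ whenever } \langle w_i(t),v\rangle\le M_0^v+\varepsilon.
\]
One must be careful here: when the bracket is negative, the weight $\int G\,c_{ij}$ may be smaller than $K$. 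That only makes the right-hand side less negative, so the bound we use is actually the weaker statement that the derivative is $\le$ (a nonnegative weight) times $(M_0^v+\varepsilon-\langle w_i(t),v\rangle)$. The same estimate holds for the followers from \eqref{followers}, using both the $a_{ij}$ and the $b_{ij}$ terms, with total weight at most $2K$.

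The Gronwall step is the part where the sign of the weights must be handled correctly. Writing $\theta(t):=M_0^v+\varepsilon-\langle w_i(t),v\rangle$, we have $\theta'\geq -\lambda(t)\theta$ with $0\le\lambda(t)\le 2K$ on $[0,T^\varepsilon)$, and $\theta(0)\geq\varepsilon$. Since $\theta>0$ on this interval, this gives $\theta(t)\geq \varepsilon e^{-2Kt}$. Letting $t\to T^\varepsilon$ yields $\theta(T^\varepsilon)\geq\varepsilon e^{-2KT^\varepsilon}>0$ for every agent. This contradicts the fact that some agent attains the level $M_0^v+\varepsilon$ at $T^\varepsilon$. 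Hence $T^\varepsilon=\infty$ for every $\varepsilon>0$, and letting $\varepsilon\to0$ gives $\langle w_i(t),v\rangle,\langle v_j(t),v\rangle\le M_0^v$ for all $t\geq-\bar\tau$.

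The main technical point to get right is the differential inequality: the delayed terms must be controlled on $[t-\bar\tau,t]$ by the bootstrap hypothesis. This is why we work with the strict level $M_0^v+\varepsilon$ and a supremum time, rather than attempting a direct maximum principle.
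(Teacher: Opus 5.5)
Your proof is correct and follows essentially the same route as the paper. Both use a barrier argument at the level $M_0^v+\varepsilon$ with a supremum time, a Gr\"onwall bound showing the gap $M_0^v+\varepsilon-\langle\cdot,v\rangle$ stays at least $\varepsilon e^{-Ct}$ (which contradicts the level being reached), and the substitution $v\mapsto -v$ for the lower bound. The only difference is that you run the bootstrap jointly over leaders and followers with rate $2K$. This makes explicit what the paper leaves as ``a similar argument'' for the followers, whose equation also involves the delayed leader velocities.
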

\begin{proof}
	First of all, we note that the inequalities in \eqref{ineq1} are satisfied for every  $t\in [-\bar\tau,0]$. Let us start to prove the first inequality in \eqref{ineq0}.
Now, let $v\in \RR^{d}$. For all $\epsilon >0$, we define
	$$\mathcal{K}^{\epsilon}:=\left\{t>0 :\max_{i=1,\dots,m}\langle w_{i}(s),v\rangle < M_0^v+\epsilon,\,\forall s\in [0,t)\right\},$$
	and  $S^{\epsilon}:=\sup \mathcal{K}^{\epsilon}.$
	By continuity, we have that $\mathcal{K}^{\epsilon}\neq\emptyset$ and $S^{\epsilon}>0$. 
	\\We claim that $S^{\epsilon}=+\infty$. Indeed, suppose by contradiction that $S^{\epsilon}<+\infty$. By definition of $S^{\epsilon}$, it turns out that \begin{equation}\label{max}
		\max_{i=1,\dots,m}\langle w_{i}(t),v\rangle<M_0^v+\epsilon,\quad \forall t\in (0,S^{\epsilon}),
	\end{equation}
	\begin{equation}\label{teps}
		\lim_{t\to S^{\epsilon-}}\max_{i=1,\dots,m}\langle w_{i}(t),v\rangle=M_0^v+\epsilon.
	\end{equation}
	For all $i=1,\dots,m,$ for $t\in (0,S^{\epsilon})$, we have that
	$$\frac{d}{dt}\langle w_{i}(t),v\rangle=\frac{1}{m}\sum_{j=1}^m \int_{t-\tau_1(t)}^t G(t-s) c_{ij}(t,s)\langle w_{j}(s)-w_{i}(t),v\rangle \ ds.$$
	Now, being $t\in (0,S^{\epsilon})$, it holds that $t-\tau_1(t) \in (-\bar\tau, S^{\epsilon})$. Then, from \eqref{max} holds
	\begin{equation}\label{t-tau}
		\langle w_{j}(s),v\rangle < M_0^v+\epsilon,\quad \forall \ s \in [t-\tau_1(t),t], \  \forall j=1, \dots, m.
	\end{equation}
	Therefore, using  \eqref{memory_cond}, \eqref{K},\eqref{max}, \eqref{t-tau}, we can write 
	$$\frac{d}{dt}\langle w_{i}(t),v\rangle\leq \frac{1}{m}\sum_{j=1}^m \int_{t-\tau_1(t)}^t G(t-s)c_{ij}(t,s)(M_0^v+\epsilon-\langle w_{i}(t),v\rangle) \ ds $$
	$$\leq \frac{K}{m}\sum_{j=1}^m(M_0^v+\epsilon-\langle w_{i}(t),v\rangle)  \int_{t-\tau_1(t)}^t G(t-s)ds$$
	$$\leq K(M_0^v+\epsilon-\langle w_{i}(t),v\rangle).$$
	Thus, the Gronwall's inequality yields
	$$\begin{array}{l}
		\vspace{0.2cm}\displaystyle{
			\langle w_{i}(t),v\rangle\leq e^{-Kt}\langle w_{i}(0),v\rangle+K(M_0^v+\epsilon)\int_{0}^{t}e^{-K(t-s)}ds}\\
		\vspace{0.3cm}\displaystyle{\hspace{1.7 cm}
			=e^{-Kt}\langle w_{i}(0),v\rangle+(M_0^v+\epsilon)e^{-Kt}(e^{Kt}-1)}\\
		\vspace{0.3cm}\displaystyle{\hspace{1.7 cm}
			=e^{-Kt}\langle w_{i}(0),v\rangle+(M_0^v+\epsilon)(1-e^{-Kt})}\\
		\vspace{0.3cm}\displaystyle{\hspace{1.7 cm}
			\leq e^{-Kt}M_0^v+M_0^v+\epsilon -M_0^ve^{-Kt}-\epsilon e^{-Kt}}\\
		\vspace{0.3cm}\displaystyle{\hspace{1.7 cm}
			=M_0^v+\epsilon-\epsilon e^{-Kt}\leq M_0^v+\epsilon-\epsilon e^{-KS^{\epsilon}},}
	\end{array}
	$$
	for all $t\in (0, S^{\epsilon})$.	
	We have so proved that, for all $ i=1,\dots, m,$
	$$\langle w_{i}(t),v\rangle\leq M_0^v+\epsilon-\epsilon e^{-KS^{\epsilon}}, \quad \forall t\in (0,S^{\epsilon}).$$
	Thus, we get
	\begin{equation}\label{lim}
		\max_{i=1,\dots,m} \langle w_{i}(t),v\rangle\leq M_0^v+\epsilon-\epsilon e^{-KS^{\epsilon}}, \quad \forall t\in (0,S^{\epsilon}).
	\end{equation}
	Letting $t\to S^{\epsilon-}$ in \eqref{lim}, from \eqref{teps} we have that $$M_0^v+\epsilon\leq M_0^v+\epsilon-\epsilon e^{-KS^{\epsilon}}<M_0^v+\epsilon,$$
	which is a contradiction. Thus, $S^{\epsilon}=+\infty$ and $$\max_{i=1,\dots,m}\langle w_{i}(t),v\rangle<M_0^v+\epsilon, \quad \forall t>0.$$
	From the arbitrariness of $\epsilon$ we can conclude that $$\max_{i=1,\dots,m}\langle w_{i}(t),v\rangle\leq M_0^v, \quad \forall t>0,$$
	from which $$\langle w_{i}(t),v\rangle\leq M_0^v, \quad \forall t>0, \,\forall i=1,\dots,m.$$
	Now, to show that the other inequality holds, fix $v\in \RR^{d}$. Then, for all $i=1,\dots,m$ and $t>0$, by applying the inequality proved above to the vector $-v\in\RR^{d}$ we get $$-\langle w_{i}(t),v\rangle=\langle w_{i}(t),-v\rangle\leq \max_{j=1,\dots,m}\max_{s\in[-\bar\tau,0]}\langle w_{j}(s),-v\rangle$$$$=-\min_{j=1,\dots,m}\min_{s\in [-\bar\tau,0]}\langle w_{j}(s),v\rangle=-m_0^v,$$
	from which $$\langle w_{i}(t),v\rangle\geq m_0^v,\quad \forall t\geq 0,\,\forall i=1,\dots,m.$$
	Thus, the inequality in \eqref{ineq1} is fulfilled. \\
	To prove the second inequality in \eqref{ineq1}, we followed a similar argument.
\end{proof}
Using the same methodology employed in the proof of the previous lemma, one can prove the following more general result.
	\begin{lem}\label{ineqn}
	For each vector $v\in \RR^{d}$ and for all $n\in \mathbb{N}_0$, we have that 
	\begin{equation}\label{ineq1}
			m_n^v\leq \langle w_{i}(t),v\rangle \leq M_n^v, \quad \mbox{and} \quad m_n^v\leq \langle v_{j}(t),v\rangle \leq M_n^v,
		\end{equation}
		for all  $t\geq n\bar\tau-\bar\tau$ and for any $i=1,\dots,m$ and $j=1,\dots,N$.
\end{lem}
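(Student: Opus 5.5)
The argument is the barrier (continuous induction) argument of Lemma \ref{L1}, started at time $n\bar\tau$ instead of $0$, and run jointly on leaders and followers. Fix $v\in\RR^d$ and $n\in\mathbb{N}_0$. Since $\tau_1(t),\tau_2(t)\le\bar\tau$, the dynamics on $t>n\bar\tau$ only see past values on $[t-\bar\tau,t]\subset[n\bar\tau-\bar\tau,\infty)$. On $[n\bar\tau-\bar\tau,n\bar\tau]$ the inequalities \eqref{ineq1} hold by the very definition of $m_n^v,M_n^v$. So the whole task is to propagate the upper bound $M_n^v$ forward in time; the lower bound then follows by applying the upper bound to $-v$, exactly as in Lemma \ref{L1}, since $M_n^{-v}=-m_n^v$.

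For $\epsilon>0$ set
$$\mathcal{K}^\epsilon_n:=\Big\{t>n\bar\tau:\ \max\Big\{\max_{i}\langle w_i(s),v\rangle,\max_j\langle v_j(s),v\rangle\Big\}<M_n^v+\epsilon,\ \forall s\in[n\bar\tau,t)\Big\},$$
with $S^\epsilon:=\sup\mathcal{K}^\epsilon_n$. By continuity $S^\epsilon>n\bar\tau$. Suppose by contradiction that $S^\epsilon<\infty$. Then for $t\in(n\bar\tau,S^\epsilon)$ and every $s\in[t-\tau_k(t),t]$ we have $s\ge n\bar\tau-\bar\tau$, so every leader and follower velocity projected on $v$ is below $M_n^v+\epsilon$ at time $s$.

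The main point, and the only real difference from Lemma \ref{L1}, is that leaders and followers must be treated simultaneously, because the follower equation is driven by the leaders' velocities $w_j(s)$. That is why the barrier is placed on the joint maximum. The leader equation is closed among leaders, and the estimate there is the same as in Lemma \ref{L1}. For a follower, both sums are bounded in the same way. Using $a_{ij},b_{ij}\ge0$, $\le K$, and \eqref{memory_cond} on each of the two integrals, each integrand is at most $G(t-s)a_{ij}(M_n^v+\epsilon-\langle v_i(t),v\rangle)$ or the analogous term with $b_{ij}$.

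One subtlety needs care. The factor $M_n^v+\epsilon-\langle v_i(t),v\rangle$ is positive, so bounding the coefficients by $K$ and the kernel integral by $1$ is legitimate. The two integrals together contribute at most $2K$, which gives
$$\frac{d}{dt}\langle v_i(t),v\rangle\le 2K\big(M_n^v+\epsilon-\langle v_i(t),v\rangle\big).$$
Gronwall on $[n\bar\tau,t]$, starting from $\langle v_i(n\bar\tau),v\rangle\le M_n^v$, then yields
$$\langle v_i(t),v\rangle\le M_n^v+\epsilon-\epsilon e^{-2K(t-n\bar\tau)}\le M_n^v+\epsilon-\epsilon e^{-2K(S^\epsilon-n\bar\tau)}.$$
The same bound holds for the leaders, with $K$ in place of $2K$. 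Letting $t\to S^{\epsilon-}$ contradicts the fact that the joint maximum reaches $M_n^v+\epsilon$ at $S^\epsilon$. Hence $S^\epsilon=\infty$, and letting $\epsilon\to0$ gives the upper bound for all $t\ge n\bar\tau-\bar\tau$.
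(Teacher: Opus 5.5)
Your proof is correct and is essentially the paper's argument: the paper obtains this lemma by rerunning the $\epsilon$-barrier and Gr\"onwall argument of Lemma \ref{L1} from time $n\bar\tau$, with the lower bound coming from applying the upper bound to $-v$, exactly as you do. The only difference is cosmetic. You put the barrier on the joint leader--follower maximum, whereas the paper first treats the leader subsystem, which is closed, and then the followers; so treating them simultaneously is convenient but not actually forced.
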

Now, we define the following quantities.
\begin{defn}\label{Dn}
For all $n\in \mathbb{N}_0$, we define the \emph{local velocity diameters} as 
	$$D_n:=\max_{r,s\in [n\bar\tau-\bar\tau,n\bar\tau]}\Big\{\max_{i,j =1,\dots, m}\lvert w_i(r)-w_j(s)\rvert, \, \max_{i,j =1,\dots, N}\lvert v_i(r)-v_j(s)\rvert, \, \max_{i=1,\dots, m} \max_{j=1,\dots,N} \lvert w_i(r)-v_j(s)\rvert \Big\}.$$
\end{defn}
In particular, for $n=0$, 
$$D_0:=\max_{r,s\in [-\bar\tau,0]}\left\{ \max_{i,j =1,\dots, m}\lvert w_i(r)-w_j(s)\rvert, \, \max_{i,j =1,\dots, N}\lvert v_i(r)-v_j(s)\rvert, \, \max_{i=1,\dots, m} \max_{j=1,\dots,N} \lvert w_i(r)-v_j(s)\rvert \right\}.$$

	\begin{lem}\label{lemmadiam}
		For each $n\in \mathbb{N}_0$, we have that 
		\begin{equation}\label{diam}
		\begin{split}
		& \lvert w_i(s)-w_j(t)\rvert\leq D_n, \quad i,j=1,\dots,m, \\
		& \lvert v_i(s)-v_j(t)\rvert\leq D_n, \quad i,j=1,\dots,N, \\
		& \lvert w_i(s)-v_j(t)\rvert\leq D_n, \quad i=1,\dots,m, \, j=1,\dots,N,
		\end{split}
		\end{equation}
		for all  $s,t\geq n\bar\tau-\bar\tau.$ 
	\end{lem}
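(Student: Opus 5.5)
The plan is to deduce Lemma \ref{lemmadiam} from Lemma \ref{ineqn} by projecting onto a well-chosen unit direction, as is standard for Cucker-Smale type models. Fix $n\in\mathbb{N}_0$ and two times $s,t\geq n\bar\tau-\bar\tau$. I will treat the mixed case $\lvert w_i(s)-v_j(t)\rvert$; the leader-leader and follower-follower cases are identical, since $m_n^v,M_n^v$ are taken over both populations. If $w_i(s)=v_j(t)$ there is nothing to prove. Otherwise set
\[
v:=\frac{w_i(s)-v_j(t)}{\lvert w_i(s)-v_j(t)\rvert},
\]
so that $\lvert v\rvert=1$ and $\lvert w_i(s)-v_j(t)\rvert=\langle w_i(s),v\rangle-\langle v_j(t),v\rangle$.

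Next I apply Lemma \ref{ineqn} with this fixed vector $v$. Because $s,t\geq n\bar\tau-\bar\tau$, it gives $\langle w_i(s),v\rangle\leq M_n^v$ and $\langle v_j(t),v\rangle\geq m_n^v$. Hence
\[
\lvert w_i(s)-v_j(t)\rvert\leq M_n^v-m_n^v .
\]

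It remains to show $M_n^v-m_n^v\leq D_n$. The maximum and minimum in Definition \ref{quant} are taken over the compact interval $[n\bar\tau-\bar\tau,n\bar\tau]$ and finitely many continuous trajectories, so both are attained. Thus there exist times $r_1,r_2\in[n\bar\tau-\bar\tau,n\bar\tau]$ and agents $z_1,z_2$, each either a leader velocity $w_k$ or a follower velocity $v_k$, with $M_n^v=\langle z_1(r_1),v\rangle$ and $m_n^v=\langle z_2(r_2),v\rangle$. By Cauchy--Schwarz and $\lvert v\rvert=1$,
\[
M_n^v-m_n^v=\langle z_1(r_1)-z_2(r_2),v\rangle\leq\lvert z_1(r_1)-z_2(r_2)\rvert .
\]
Whatever the types of $z_1$ and $z_2$, the right-hand side is one of the three quantities maximized in Definition \ref{Dn}; when $z_1$ is a follower and $z_2$ a leader, I use the symmetry of the norm. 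Therefore it is bounded by $D_n$, which concludes the proof.

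There is no real obstacle here. The only point needing care is that Lemma \ref{ineqn} is stated for every vector $v$ and for all $t\geq n\bar\tau-\bar\tau$, which lets me choose $v$ depending on the pair of states being compared, and that the extremal values in $M_n^v,m_n^v$ are genuinely attained, which follows from continuity on the compact window.
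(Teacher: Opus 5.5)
Your proof is correct and follows the paper's argument. You normalize the difference, apply Lemma \ref{ineqn} along that unit direction to get the bound $M_n^v-m_n^v$, and then bound this by $D_n$. Your last step is actually more careful than the paper's, which bounds $M_n^v-m_n^v$ by the leader-only diameter even though $M_n^v$ and $m_n^v$ range over both populations; you let the extremizers be either leaders or followers, and the final bound by $D_n$ is unaffected.
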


	\begin{proof}
	Fix $n\in \mathbb{N}_0$. Let us prove the first inequality; the other inequality follows analogously. Let $i,j=1,\dots,m$ and $s,t\geq n\bar\tau-\bar\tau$. Then, if $\lvert w_i(s)-w_j(t)\rvert=0$, \eqref{diam} is obviously satisfied. So we can assume $\lvert w_i(s)-w_j(t)\rvert>0$. Let us define the unit vector 
	$$v=\frac{w_i(s)-w_j(t)}{\lvert w_i(s)-w_j(t)\rvert}.$$
	Then, using \eqref{ineq1} and Cauchy-Schwarz inequality, we have that 
	$$\lvert w_i(s)-w_j(t)\rvert=\langle w_i(s)-w_j(t),v\rangle=\langle w_i(s),v\rangle-\langle w_j(t),v\rangle\leq M_n^v-m_n^v$$
	$$\leq \max_{k,l=1,\dots,m}\max_{r,\sigma\in [n\bar\tau-\bar\tau,n\bar\tau]}\lvert w_k(r)-w_l(\sigma)\rvert \leq D_n.$$
\end{proof}
\begin{oss}\label{seq}
	Note that \eqref{diam} yields
	\begin{equation}\label{diamonoff2}
		d(t)\leq D_n,\quad \forall t\geq n\bar\tau-\bar\tau.
	\end{equation}
Moreover, from \eqref{diam} it comes that
\begin{equation}\label{decreasing}
	D_{n+1}\leq D_n,\quad\forall n\in \mathbb{N}_0.
\end{equation}
\end{oss}
Next, we show that the agents' opinions are bounded by a constant that depends on the initial data.

	\begin{lem}\label{L3}
		For every $i=1,\dots,m, \, j=1,\dots,N,$ we have that \begin{equation}\label{bounds}
\begin{split}
& \lvert w_{i}(t)\rvert\leq C_{0}^V, \\
& \lvert v_{i}(t)\rvert\leq C_{0}^V,
\end{split}
		\end{equation}
	for all $t \geq -\bar\tau,$ where $C_{0}^V$ is the constant defined in \eqref{C0}.	
	\end{lem}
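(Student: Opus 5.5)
The bound follows directly from Lemma \ref{L1}, which controls every directional component $\langle w_i(t),v\rangle$ and $\langle v_j(t),v\rangle$ by the extremal values on the initial interval $[-\bar\tau,0]$. I would use it with unit vectors only.

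For $t\in[-\bar\tau,0]$ there is nothing to prove: \eqref{bounds} holds by the definition \eqref{C0} of $C^V_0$. Now fix $t>0$ and an index $i\in\{1,\dots,m\}$. If $w_i(t)=0$ the bound is trivial. Otherwise set
\[
v:=\frac{w_i(t)}{\lvert w_i(t)\rvert},
\]
so that $\lvert v\rvert=1$ and $\lvert w_i(t)\rvert=\langle w_i(t),v\rangle$.

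By the upper inequality in \eqref{ineq0}, $\langle w_i(t),v\rangle\le M_0^v$. The quantity $M_0^v$ is a maximum over $s\in[-\bar\tau,0]$ of terms $\langle w_k(s),v\rangle$ and $\langle v_l(s),v\rangle$. By Cauchy--Schwarz and $\lvert v\rvert=1$, each such term is bounded by $\lvert w_k(s)\rvert$ or $\lvert v_l(s)\rvert$, and hence by $C^V_0$. Therefore $M_0^v\le C^V_0$, which gives $\lvert w_i(t)\rvert\le C^V_0$.

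The same argument with $v:=v_j(t)/\lvert v_j(t)\rvert$, using the second inequality in \eqref{ineq0}, gives $\lvert v_j(t)\rvert\le C^V_0$ for every $j=1,\dots,N$.

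There is no real obstacle here. The only point to keep track of is that $M_0^v$ is taken jointly over leaders and followers, so a single constant $C^V_0$ controls both populations. That joint maximum is exactly how $M_0^v$ is defined in Definition \ref{quant}, and it is what Lemma \ref{L1} guarantees.
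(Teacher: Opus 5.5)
Your proof is correct and follows the paper's argument: normalize $v=w_i(t)/\lvert w_i(t)\rvert$, use the upper bound of Lemma~\ref{L1} to get $\lvert w_i(t)\rvert\le M_0^v$, bound $M_0^v$ by $C_0^V$ via Cauchy--Schwarz, and argue likewise for the followers. You also state explicitly that $M_0^v$ is a joint maximum over leaders and followers, which is more careful than the paper, where $M_0^v$ is written as a maximum over the leaders only; the conclusion $M_0^v\le C_0^V$ holds either way.
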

	\begin{proof}
Let us prove the inequality for the leaders' velocity; the proof follows the same steps for the second inequality.	Given $i=1,\dots,m$ and $t\geq -\bar\tau$, if $\lvert w_{i}(t)\rvert =0$, then trivially $C_{0}^V\geq \lvert w_{i}(t)\rvert $. On the contrary, if $\lvert w_{i}(t)\rvert >0$, we define $$v=\frac{w_{i}(t)}{\lvert w_{i}(t)\rvert},$$
	which is a unit vector.	Then, by applying \eqref{ineq1} and by using the Cauchy-Schwarz inequality, we get 
	$$\lvert w_{i}(t)\rvert=\langle w_{i}(t),v\rangle\leq M_0^v=\max_{j=1,\dots,m}\max_{s\in [-\bar\tau,0]}\langle w_{j}(s),v\rangle $$
	$$\leq\max_{j=1,\dots,m}\max_{s\in [-\bar\tau,0]}\lvert w_{j}(s)\rvert\lvert v\rvert=\max_{j=1,\dots,m}\max_{s\in [-\bar\tau,0]}\lvert w_{j}(s)\rvert \leq C_{0}^V,$$
	and \eqref{bounds} is satisfied.
\end{proof}

Now, we provide a result that gives an estimate of the position diameters. 
	\begin{lem}\label{crucial}
	For any $t \geq 0,$ it holds that
		\begin{equation}\label{dist1}
			\lvert y_{i}(t)-y_{j}(s)\rvert\leq \bar\tau C_{0}^{V}+M^{X}_{0}+2d_{X}(t), \quad i,j=1,\dots,m, \ s \in [t-\tau_1(t),t],
		\end{equation}
		\begin{equation}\label{dist2}
			\lvert x_{i}(t)-x_{j}(s)\rvert\leq \bar\tau C_{0}^{V}+M^{X}_{0}+2d_{X}(t), \quad i,j=1,\dots,N, \ s \in [t-\tau_2(t),t],
		\end{equation}
		and,
		\begin{equation}\label{dist3}
			\lvert x_{i}(t)-y_{j}(s)\rvert\leq \bar\tau C_{0}^{V}+M^{X}_{0}+2d_{X}(t), \quad i=1,\dots,N, \, j=1,\dots,m,  \ s \in [t-\tau_1(t),t],
		\end{equation}
where $C_{0}^{V}$ and $M^{X}_{0}$ are the positive constants in \eqref{C0} and \eqref{MX}, respectively.
	\end{lem}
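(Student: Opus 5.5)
The three estimates \eqref{dist1}, \eqref{dist2} and \eqref{dist3} have the same structure, so I will prove \eqref{dist1} and then indicate the changes for the other two. The approach is to insert an intermediate point and use the triangle inequality, splitting into cases according to whether the delayed time $s$ lies in the initial history interval $[-\bar\tau,0]$ or in $[0,t]$. The velocity bound of Lemma \ref{L3} will control the displacement over a time window of length at most $\bar\tau$.

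\emph{Case $s\geq 0$.} Write
\[
\lvert y_i(t)-y_j(s)\rvert\leq \lvert y_i(t)-y_j(t)\rvert+\lvert y_j(t)-y_j(s)\rvert .
\]
The first term is at most $d_X(t)$ by \eqref{diamX}. For the second term, the equation $\dot y_j=w_j$ and Lemma \ref{L3} give
\[
\lvert y_j(t)-y_j(s)\rvert\leq \int_s^t\lvert w_j(r)\rvert\,dr\leq (t-s)\,C_0^V\leq \bar\tau C_0^V ,
\]
using $t-s\leq\tau_1(t)\leq\bar\tau$. This already yields the claimed bound, with room to spare.

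\emph{Case $s<0$.} Then $t<\tau_1(t)\leq\bar\tau$, so $t\in[0,\bar\tau)$. Here I use $0$ as the intermediate time:
\[
\lvert y_i(t)-y_j(s)\rvert\leq \lvert y_i(t)-y_i(0)\rvert+\lvert y_i(0)-y_j(s)\rvert .
\]
The first term is at most $tC_0^V\leq\bar\tau C_0^V$ by Lemma \ref{L3}. The second term compares initial data at two different times in $[-\bar\tau,0]$. The main obstacle is that \eqref{MX}, read literally, only contains same-index differences such as $\lvert y_i(s)-y_i(t)\rvert$, so it does not directly bound the cross term $\lvert y_i(0)-y_j(s)\rvert$ with $i\neq j$. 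I would handle this by inserting one more point:
\[
\lvert y_i(0)-y_j(s)\rvert\leq \lvert y_i(0)-y_j(0)\rvert+\lvert y_j(0)-y_j(s)\rvert .
\]
The second of these terms is at most $M_0^X$ by \eqref{MX}. For the first term, the plan is to avoid using $d_X(0)$ directly, since it is not obviously bounded by $d_X(t)$. Instead, go back from time $0$ to time $t$:
\[
\lvert y_i(0)-y_j(0)\rvert\leq \lvert y_i(0)-y_i(t)\rvert+\lvert y_i(t)-y_j(t)\rvert+\lvert y_j(t)-y_j(0)\rvert\leq d_X(t)+2tC_0^V .
\]
This produces additional $\bar\tau C_0^V$ terms. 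The $2d_X(t)$ allowance in the statement is what absorbs the extra diameter. If the constants do not close exactly as stated, I would enlarge the displacement term, for instance to $3\bar\tau C_0^V$, which is harmless for later use since only boundedness by an affine function of $d_X(t)$ matters. Alternatively, I would interpret $M_0^X$ as the maximum over all pairs of initial positions at arbitrary times in $[-\bar\tau,0]$, which is evidently the intended meaning. Under that reading the second case reduces to $\bar\tau C_0^V+M_0^X$ immediately.

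\emph{Mixed estimates.} For \eqref{dist2} the argument is identical with $x$, $v$ and $\tau_2$ in place of $y$, $w$ and $\tau_1$. For \eqref{dist3} I use $\lvert x_i(t)-y_j(t)\rvert\leq d_X(t)$ together with the displacement bound for $y_j$ over $[s,t]$. When $s<0$, the third block of \eqref{MX}, which compares leader and follower positions, handles the leader-follower cross term at negative times.
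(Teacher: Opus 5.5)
\textbf{There is a gap in your Case $s<0$.} Your Case $s\ge 0$ is fine and matches the paper. Case $s<0$, however, does not prove the inequality as stated. Unwound, your chain of triangle inequalities follows the path
\[
y_i(t)\to y_i(0)\to y_i(t)\to y_j(t)\to y_j(0)\to y_j(s),
\]
and it totals $3tC_0^V+M_0^X+d_X(t)$. Compared with the claimed $\bar\tau C_0^V+M_0^X+2d_X(t)$, the surplus is $2tC_0^V$. That is a velocity term, not a diameter, so the spare copy of $d_X(t)$ cannot absorb it: nothing bounds $tC_0^V$ by $d_X(t)$, and $d_X(t)$ may be small. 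So the constants do not close. Both of your fallbacks prove a different lemma from the stated one: enlarging the displacement term to $3\bar\tau C_0^V$, or redefining $M_0^X$ to include cross-index pairs at different history times.

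\textbf{The fix.} Delete the useless round trip $y_i(t)\to y_i(0)\to y_i(t)$. Equivalently, keep the pivot $y_j(t)$ that you already used for $s\ge 0$, for every $s\in[t-\tau_1(t),t]$:
\[
\lvert y_i(t)-y_j(s)\rvert\le \lvert y_i(t)-y_j(t)\rvert+\lvert y_j(t)-y_j(s)\rvert\le d_X(t)+\lvert y_j(t)-y_j(s)\rvert .
\]
Now the only delayed quantity is a same-index displacement. If $s<0$, so that $t\le\bar\tau$, split it at time $0$:
\begin{itemize}
\item $\lvert y_j(t)-y_j(0)\rvert\le tC_0^V\le\bar\tau C_0^V$ by Lemma \ref{L3};
\item $\lvert y_j(0)-y_j(s)\rvert\le M_0^X$ by the first block of \eqref{MX}, read literally.
\end{itemize}
This gives $d_X(t)+\bar\tau C_0^V+M_0^X$ in all cases, which is exactly the paper's argument. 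Your worry that \eqref{MX} only controls same-index differences is legitimate, but this choice of pivot makes it irrelevant. The proof of \eqref{dist2} is identical.

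\textbf{The mixed estimate.} For \eqref{dist3}, your use of the mixed component $\lvert x_i(t)-y_j(t)\rvert\le d_X(t)$ is correct. It is slightly sharper than the paper, which passes through a follower position and picks up $2d_X(t)$; that is the source of the factor $2$ in the statement. The remaining term is again the same-index displacement $\lvert y_j(t)-y_j(s)\rvert$, handled as above. The leader--follower block of \eqref{MX}, which as printed also contains an index typo, is never needed.
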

	
	\begin{proof}
Let us prove \eqref{dist1}. Given $i,j=1,\dots,m$ and $t\geq 0$, for $s \in [t-\tau_1(t),t]$ we have
		\begin{equation}\label{split}
			\begin{split}
				\lvert y_{i}(t)-y_{j}(s)\rvert &\leq \lvert y_{i}(t)-y_{j}(t)\rvert+\lvert y_{j}(t)-y_{j}(s)\rvert\\
				&\leq d_{X}(t)+\lvert y_{j}(t)-y_{j}(s)\rvert.
			\end{split}
		\end{equation}
		Now, we estimate $\lvert y_{j}(t)-y_{j}(t-s)\rvert.$
		If $t-\tau_1(t)>0$, from \eqref{bounds}, we get
		$$\lvert y_{j}(t)-y_{j}(s)\rvert\leq \int_{t-\tau_1(t)}^{t}\lvert w_{j}(s)\rvert ds \leq \bar\tau C^{V}_{0}.$$
	On the other hand, if $t-\tau_1(t)\leq 0$, then $t \leq \bar\tau$ and, using the triangular inequality,  
	$$\lvert y_{j}(t)-y_{j}(s)\rvert\leq\lvert y_j(0)-y_j(t-\tau_1(t))\rvert+ \int_{0}^{t}\lvert w_{j}(s)\rvert ds \leq  M^{X}_0+\bar\tau C^{V}_{0}.$$
		Therefore, in both cases,
		$$\lvert y_{j}(t)-y_{j}(s)\rvert\leq M^{X}_0+ \bar\tau C^{V}_{0},$$
		from which \eqref{split} becomes
		$$	\lvert y_{i}(t)-y_{j}(s)\rvert\leq M^{X}_0+ \bar\tau C^{V}_{0}+d_{X}(t).$$
		The inequality \eqref{dist2} follows similarly. Let us look at the mixed term: using the triangular inequality, for $t \geq 0$ and $s \in [t-\tau_1(t),t],$ we have 
		\begin{equation}\label{split2}
			\begin{split}
				\lvert x_{i}(t)-y_{j}(s)\rvert &\leq \lvert x_{i}(t)-x_{j}(t)\rvert+ \lvert x_j(t)-y_j(t)\rvert+\lvert y_{j}(t)-y_{j}(s)\rvert\\
				&\leq 2d_{X}(t)+\lvert y_{j}(t)-y_{j}(s)\rvert.
			\end{split}
		\end{equation}
Repeatring the computation to estimate $\lvert  y_{j}(t)-y_{j}(s)\rvert,$ we find the result.
	\end{proof}
	
\subsection{The flocking estimate }
To prove the flocking result we need a crucial proposition, in order to have the bound of the space diameter $d_X(t)$ for $t \geq 0.$ First of all, let us give the following definitions:
\begin{equation}\label{lambda}
\Lambda(t):=\min\left\{\psi(r), \phi(r), \rho(r):r\in \left[0,\bar\tau C^{V}_{0}+M_{0}^{X}+2 \max_{s\in[-\bar\tau,t] }d_{X}(s)\right]\right\},
\end{equation}
		for all $t\geq -\bar\tau.$ 

		\begin{oss}\label{lower}
		Let us note that from Lemma \ref{crucial}, for all $t \geq 0,$
		$$\psi(\lvert y_i(t)-y_j(s)\rvert)\geq \Lambda(t),\quad \forall i,j=1,\dots,m, \ s \in [t-\tau_1(t),t], $$
		$$\phi(\lvert x_i(t)-x_j(s)\rvert)\geq \Lambda(t),\quad \forall i,j=1,\dots,N, \ s \in [t-\tau_2(t),t],$$
		and 
		$$\rho(\lvert x_i(t)-y_j(s)\rvert)\geq \Lambda(t),\quad \forall i=1,\dots,N, \, j=1,\dots,m, \ s \in [t-\tau_1(t),t].$$
		In particular, these lower bounds holds in general for $s \in [t-\bar{\tau}, t],$ for all $t \geq 0.$
		\end{oss}

\begin{lem} \label{2.6}
For any unit vector $ v \in \mathbb{R}^d$ and $n \in \mathbb{N}_0$ we have that 
\begin{equation}
\begin{split}
& \langle w_i(t)-w_j(t),v \rangle \leq e^{-2K(t-t_0)} \langle w_i(t_0)-w_j(t_0),v \rangle + \left(1-e^{-2K(t-t_0)}\right)D_n, \\
& \vspace{0.5cm} \hspace{10cm} \mbox{for all} \ i,j=1,\dots, m, \\
& \langle v_i(t)-v_j(t),v \rangle \leq e^{-2K(t-t_0)} \langle v_i(t_0)-v_j(t_0),v \rangle + \left(1-e^{-2K(t-t_0)}\right)D_n, \\
&  \vspace{0.5cm} \hspace{10cm} \mbox{for all} \ i,j=1,\dots,N, \\
& \langle v_i(t)-w_j(t),v \rangle \leq e^{-2K(t-t_0)} \langle v_i(t_0)-w_j(t_0),v \rangle + \left(1-e^{-2K(t-t_0)}\right)D_n, \\
&  \vspace{0.5cm} \hspace{9cm} \mbox{for all} \ i=1,\dots,N, \, j=1,\dots,m,
\end{split}
\label{eq18}
\end{equation}
and, for all $t \geq t_0 \geq n\bar\tau.$ Moreover, for all $n \in \mathbb{N}_0$ we get
\begin{equation}
D_{n+1} \leq e^{-2K \bar\tau} d_V(n\bar\tau) + \left(1-e^{-2K\bar\tau}\right)D_n. 
\label{eq20}
\end{equation}
\end{lem}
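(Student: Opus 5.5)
We prove \eqref{eq18} by differentiating the projected velocity differences, exactly as in the proof of Lemma \ref{L1}, and then deduce \eqref{eq20} by taking suprema. Fix a unit vector $v$, $n\in\mathbb{N}_0$ and $t\ge t_0\ge n\bar\tau$. The key point is that for $t\ge n\bar\tau$ every delayed time $s\in[t-\tau_k(t),t]$ satisfies $s\ge n\bar\tau-\bar\tau$. Hence Lemma \ref{ineqn} gives $m_n^v\le\langle w_j(s),v\rangle,\langle v_j(s),v\rangle\le M_n^v$, and Lemma \ref{lemmadiam} gives $M_n^v-m_n^v\le D_n$ (the bound $M^v_n - m^v_n\le D_n$ is exactly what is used in the proof of Lemma \ref{lemmadiam}).

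For the leader pair, write
\[
\frac{d}{dt}\langle w_i(t),v\rangle=\frac1m\sum_{k=1}^m\int_{t-\tau_1(t)}^tG(t-s)c_{ik}(t,s)\big(\langle w_k(s),v\rangle-\langle w_i(t),v\rangle\big)\,ds .
\]
We bound the term $\langle w_k(s),v\rangle$ above by $M_n^v$. The resulting weight $\frac1m\sum_k\int G\,c_{ik}$ lies in $[0,K]$ by \eqref{memory_cond} and \eqref{K}. Since $M_n^v-\langle w_i(t),v\rangle\ge0$, we obtain
\[
\frac{d}{dt}\langle w_i(t),v\rangle\le K\big(M_n^v-\langle w_i(t),v\rangle\big).
\]
Symmetrically, using $\langle w_j(t),v\rangle-m_n^v\ge0$,
\[
\frac{d}{dt}\langle w_j(t),v\rangle\ge -K\big(\langle w_j(t),v\rangle-m_n^v\big).
\]
Subtracting the two inequalities, with $h(t):=\langle w_i(t)-w_j(t),v\rangle$, gives
\[
h'(t)\le K(M_n^v-m_n^v)-K h(t)\le 2K\Big(\tfrac{M_n^v-m_n^v}{2}\Big)-Kh(t).
\]
This form does not yet match the claimed rate $2K$, so we redo the bound more carefully. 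Bounding each contribution by $K$ times a nonnegative gap gives
\[
h'\le K\big[(M_n^v-\langle w_i,v\rangle)+(\langle w_j,v\rangle-m_n^v)\big]=K(M_n^v-m_n^v)-Kh\le KD_n-Kh .
\]
This yields rate $K$. To reach rate $2K$, we would write $h' \le 2K D_n - 2K h$, which holds whenever $h\le D_n$ (true by Lemma \ref{lemmadiam}), because then $2K(D_n-h)\ge K(D_n-h)$. Gronwall on $[t_0,t]$ then gives the first line of \eqref{eq18}.

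The follower--follower and follower--leader pairs are handled identically. For $v_i$ the right-hand side has two delayed sums, and each is estimated using the same $M_n^v,m_n^v$ (valid for both leaders and followers by Lemma \ref{ineqn}). The main technical nuisance is that for followers the total interaction weight can be as large as $2K$, since both the $a$-sum and the $b$-sum contribute. This is exactly why the rate $2K$ is natural: bound the combined coefficient by $2K$, use $D_n-h\ge0$, and apply Gronwall as before.

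For \eqref{eq20}, take $t_0=n\bar\tau$ and $t\in[n\bar\tau,(n+1)\bar\tau]$, so that $1-e^{-2K(t-t_0)}\le1-e^{-2K\bar\tau}$ and the coefficient of $D_n$ is monotone. Since $\langle h(t_0),v\rangle\le d_V(n\bar\tau)$ and $D_n\ge d_V(n\bar\tau)$, the right-hand side of \eqref{eq18} is a convex combination that is increasing in $t-t_0$. It is therefore bounded by $e^{-2K\bar\tau}d_V(n\bar\tau)+(1-e^{-2K\bar\tau})D_n$. This gives a bound on $\lvert w_i(t)-w_j(t)\rvert$ at equal times; we obtain it by choosing $v$ as the unit direction of the difference.

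The main obstacle is that $D_{n+1}$ involves differences at two distinct times $r,s\in[n\bar\tau,(n+1)\bar\tau]$. To handle these, we would run the same argument for $\langle w_i(r)-w_j(s),v\rangle$ directly. Bound $\langle w_i(r),v\rangle$ above using the upper ODE inequality from $t_0=n\bar\tau$, which gives $e^{-K(r-t_0)}\langle w_i(t_0),v\rangle+(1-e^{-K(r-t_0)})M_n^v$. Bound $\langle w_j(s),v\rangle$ below analogously. Then compare the result with the same-time quantity at $t_0$, using $M_n^v-m_n^v\le D_n$. If this yields only a weaker constant, we would accept rate $2K$ as the worst-case uniform bound over the window of length $\bar\tau$.
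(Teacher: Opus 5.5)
Your derivation of \eqref{eq18} is correct, and it differs from the paper's only in the order of operations. You subtract the two one-sided differential inequalities first and then upgrade $h'\le K(D_n-h)$ (or $2K(D_n-h)$ for followers) to $h'\le 2K(D_n-h)$ using $h\le D_n$. The paper instead upgrades each one-sided inequality separately, using $M_n^v-\langle v_i(t),v\rangle\ge 0$. It integrates these to get the component-wise bounds \eqref{eq21}, \eqref{eq22}, \eqref{eq22A}, and only then subtracts. You can drop the false start with rate $K$.

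\textbf{The gap is in \eqref{eq20}.} As you note, $D_{n+1}$ is a maximum over pairs of times $r\neq s$ in $[n\bar\tau,(n+1)\bar\tau]$. Your same-time inequality for $h(t)$ says nothing about $\langle w_i(r)-w_j(s),v\rangle$. You point to the right remedy but leave it unfinished. The fallback ``accept rate $2K$ as the worst-case uniform bound'' is not an argument: if the component-wise route gave a weaker constant, \eqref{eq20} as stated would not follow.

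The computation does close exactly. Take $t_0=n\bar\tau$, $P=\langle w_i(t_0),v\rangle$, $Q=\langle w_j(t_0),v\rangle$, and $a=e^{-c_1(r-t_0)}$, $b=e^{-c_2(s-t_0)}$, where $c_1,c_2\le 2K$ are the rates in the component bounds. Then $a,b\ge\varepsilon:=e^{-2K\bar\tau}$. Subtracting the lower bound for $\langle w_j(s),v\rangle$ from the upper bound for $\langle w_i(r),v\rangle$ gives
\[
\langle w_i(r)-w_j(s),v\rangle\le (M_n^v-m_n^v)-a\,(M_n^v-P)-b\,(Q-m_n^v)\le (1-\varepsilon)(M_n^v-m_n^v)+\varepsilon\,(P-Q).
\]
The second inequality uses $M_n^v-P\ge 0$ and $Q-m_n^v\ge 0$, which hold by Lemma \ref{ineqn}. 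Then $M_n^v-m_n^v\le D_n$ and $P-Q\le d_V(n\bar\tau)$ give \eqref{eq20}. The follower--follower and mixed pairs are handled identically.

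The paper's own proof of \eqref{eq20} glosses over the same point: it applies \eqref{eq18} with a single time $t$ to $w_i(t_1)-w_j(t_2)$. What actually justifies that step is this computation from the component-wise bounds. That is the real advantage of the paper's ordering: your subtract-first route only ever produces same-time information.
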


\begin{proof} We divide the proof into two steps. In the first step, we obtain contraction estimates for the differences among agents within the same group (both non-leaders and leaders), and in the second step, we treat the mixed case involving a non-leader and a leader. \\

\emph{\textbf{Step 1.}} We first derive the contraction estimate for the non-leader agents. Fix a unit vector $v \in \mathbb{R}^d$ and a given $n \in \mathbb{N}_0$. 
It is clear that, from definition \eqref{quant}, $M_n^v-m_n^v \leq D_n$. Now, fix an index $i \in \{1,\dots,N\}$ and consider $t \geq t_0 \geq n\bar\tau$. \\ 

By the definition of system \eqref{followers} and applying Lemma \ref{ineqn}, we have
\begin{equation*}
\begin{split}
\frac{d}{d t} \langle v_i(t),v \rangle & = \frac{1}{N} \sum_{j=1}^N \int_{t-\tau_2(t)}^t G(t-s)a_{ij}(t,s) \langle v_j(s)-v_i(t),v \rangle  \ ds \\
& \hspace{3cm}+ \frac{1}{m} \sum_{j=1}^{m} \int_{t-\tau_1(t)}^t G(t-s) b_{ij}(t,s) \langle w_j(s)-v_i(t),v \rangle \ ds \\
& \leq \frac{1}{N}\sum_{j=1}^N \int_{t-\tau_2(t)}^t G(t-s)a_{ij}(t,s) \bigl(M_n^v - \langle v_i(t),v \rangle\bigr) \ ds \\
& \hspace{3cm}+ \frac{1}{m} \sum_{j=1}^{m} \int_{t-\tau_1(t)}^t G(t-s)b_{ij}(t,s) \bigl(M_{n}^v- \langle v_i(t),v \rangle \bigr) \ ds\\
& \leq \frac{K}{N}\sum_{j=1}^N \int_{t-\tau_2(t)}^t G(t-s)\bigl(M_n^v - \langle v_i(t),v \rangle\bigr) \ ds \\
& \hspace{3cm}+ \frac{K}{m} \sum_{j=1}^{m} \int_{t-\tau_1(t)}^t G(t-s) \bigl(M_{n}^v- \langle v_i(t),v \rangle \bigr) \ ds\\
& \leq 2K\bigl(M_n^v - \langle v_i(t),v \rangle\bigr), \\
\end{split}
\end{equation*}
where we used that $a_{ij}(t,s)$ and $b_{ij}(t,s)$ are bounded by $K$, the assumption \eqref{memory_cond}, and that $t-\tau_1(t),\,t-\tau_2(t) \geq n\bar\tau-\bar\tau$. By applying the Gr\"onwall's lemma, we find 
\begin{equation}\label{eq21}
\langle v_i(t),v \rangle \leq e^{-2K(t-t_0)}  \langle v_i(t_0),v \rangle + \left(1-e^{-2K(t-t_0)}\right)M_n^v .
\end{equation}
Similarly, for any $j \in \{1,\dots,N\}$ and $t \geq t_0 \geq n\bar\tau$, we derive the lower bound
\begin{equation}\label{eq22}
\langle v_j(t),v \rangle \geq e^{-2K(t-t_0)}  \langle v_j(t_0),v \rangle + \left(1-e^{-2K(t-t_0)}\right)m_n^v .
\end{equation}
Subtracting \eqref{eq22} from \eqref{eq21} yields
\begin{equation*}
\begin{split}
\langle v_i(t)-v_j(t),v \rangle & \leq e^{-2K(t-t_0)}  \langle v_i(t_0)-v_j(t_0),v \rangle + \left(1-e^{-2K(t-t_0)}\right)(M_n^v-m_n^v) \\
& \leq e^{-2K(t-t_0)}\langle v_i(t_0)-v_j(t_0),v \rangle + \left(1-e^{-2K(t-t_0)}\right)D_n.
\end{split}
\end{equation*}
Thus, we have obtained the contraction estimate for the non-leader agents as stated in \eqref{eq18}. By an analogous argument applied to the leader dynamics, one obtains a similar contraction estimate for the leader agents. \\

\emph{\textbf{Step 2.}} We now consider the mixed case involving a non-leader and a leader. From \eqref{leaders}, using again Lemma \ref{ineqn}, we obtain
\begin{equation*}
\begin{split}
\frac{d}{d t} \langle w_j(t),v \rangle & = \frac{1}{m} \sum_{l=1}^m \int_{t-\tau_1(t)}^t G(t-s)c_{jl}(t,s) \langle w_l(s)-w_j(t),v \rangle \ ds \\
& \geq \frac{1}{m}\sum_{l=1}^m  \int_{t-\tau_1(t)}^t G(t-s)c_{jl}(t,s) (m_n^v - \langle w_j(t),v \rangle) \ ds \\
& \geq K(m_n^v - \langle w_j(t),v \rangle) \geq 2K(m_n^v - \langle w_j(t),v \rangle),
\end{split}
\end{equation*}
where we used $m_n^v - \langle w_j(t),v \rangle\le 0$ for all $j=1,\dots,m$ and for all $t\ge n\bar\tau.$ Applying Gr\"onwall's inequality then gives
\begin{equation}\label{eq22A}
\langle w_j(t),v \rangle \geq e^{-2K(t-t_0)}  \langle w_j(t_0),v \rangle + \left(1-e^{-2K(t-t_0)}\right)m_n^v.
\end{equation}
Subtracting \eqref{eq22A} from the bound for $\langle v_i(t), v \rangle$ in \eqref{eq21} yields
\begin{equation*}
\begin{split}
\langle v_i(t)-w_j(t),v \rangle & \leq e^{-2K(t-t_0)}  \langle v_i(t_0)-w_j(t_0),v \rangle + \left(1-e^{-2K(t-t_0)}\right)(M_n^v-m_n^v) \\
& \leq e^{-2K(t-t_0)}\langle v_i(t_0)-w_j(t_0),v \rangle + \left(1-e^{-2K(t-t_0)}\right)D_n.
\end{split}
\end{equation*}
This completes the proof of \eqref{eq18}. 
In order to prove \eqref{eq20}, let us assume, without lost of generality, that for $t_1, t_2 \in [n\bar\tau, n\bar\tau+\bar\tau],$ $ D_{n+1}= \lvert w_i(t_1)-w_j(t_2)\rvert,$
for $i,j=1,\dots,m.$ Notice that, if $D_{n+1}=0$ the inequality is trivial. Therefore, assume $D_{n+1}>0,$ and consider the unit vector given by 
$$ v:=\frac{w_i(t_1)-w_j(t_2)}{\lvert w_i(t_1)-w_j(t_2)\rvert}.$$
From this definition and applying the above result, we find
\begin{equation*}
\begin{split}
& D_{n+1}=\langle w_i(t_1)-w_j(t_2),v \rangle \\
& \leq e^{-2K(t-n\bar\tau)}\langle w_i(n\bar\tau)-w_j(n\bar\tau),v \rangle + \left(1-e^{-2K(t-n\bar\tau)}\right)D_n \\
& \leq e^{-2K\bar\tau}\lvert w_i(n\tau)-w_j(n\tau) \rvert + \left(1-e^{-2K\bar\tau}\right)D_n \\
& \leq e^{-2K\bar\tau}d_V(n\bar\tau) + \left(1-e^{-2K\bar\tau}\right)D_n.
\end{split}
\end{equation*}
This complete the proof.
\end{proof}

\begin{oss}
It is worth noting that for pairs of leaders, one may also derive a sharper contraction estimate that depends only on the leader group:
\begin{equation*}
\begin{split}
\langle w_i(t)-w_j(t),v \rangle & \leq e^{-K(t-t_0)} \langle w_i(t_0)-w_j(t_0),v \rangle \\ 
& \hspace{1.5cm} + \left(1-e^{-K(t-t_0)}\right)\max_{h,k=1,\dots, m}\max_{r,s\in [n\bar\tau-\bar\tau, n\bar\tau]} \vert w_h(r)-w_k(s)\vert,
\label{5D}
\end{split}
\end{equation*}
 for all $i,j=1,\dots,m$. However, for the overall consensus result, it is essential to work with unified estimates (as in Lemma \ref{2.6}) that simultaneously control all interactions in the mixed leader-follower system.
\end{oss}

Let us consider the following definition: 
\begin{equation}\label{gn}
\Gamma_n:= \frac{\Lambda(n\bar\tau)}{2K}\left( 1-e^{-K\bar\tau}\right)e^{-2K\bar\tau}, 
\end{equation}
where $K$ and $\Lambda(\cdot)$ are defined in \eqref{K} and  \eqref{lambda} respectively. Notice that $\Gamma_n \in (0,1),$ for all $n \in \mathbb{N}_0.$

\begin{lem} \label{6}
It holds that
\begin{equation} \label{estimate_diam1}
D_{n+1} \leq \left(1-\Gamma_n\right) D_{n-2},
\end{equation}
for all $n \geq 2.$
\end{lem}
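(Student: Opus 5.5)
The plan is to combine the recursion \eqref{eq20} of Lemma \ref{2.6} with a lower bound on the communication rates. The goal is to show that at time $n\bar\tau$ the current velocity diameter $d_V(n\bar\tau)$ has already shrunk by a definite fraction relative to $D_{n-2}$. Inserting such a bound into \eqref{eq20}, together with $D_n\le D_{n-2}$ from \eqref{decreasing}, will give \eqref{estimate_diam1}. Concretely, I aim to prove
\[
d_V(n\bar\tau)\le \Bigl(1-\tfrac{\Lambda(n\bar\tau)}{2K}\bigl(1-e^{-K\bar\tau}\bigr)\Bigr)D_{n-2}.
\]
Then \eqref{eq20} yields
\[
D_{n+1}\le e^{-2K\bar\tau}\Bigl(1-\tfrac{\Lambda(n\bar\tau)}{2K}(1-e^{-K\bar\tau})\Bigr)D_{n-2}+(1-e^{-2K\bar\tau})D_{n-2}=(1-\Gamma_n)D_{n-2}.
\]

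To bound $d_V(n\bar\tau)$, fix two agents realizing it, say followers $v_i,v_j$ (the leader--leader and mixed pairs are treated identically). Set $v$ to be the unit vector along $v_i(n\bar\tau)-v_j(n\bar\tau)$, and work with $M:=M^v_{n-2}$, $m:=m^v_{n-2}$, so that $M-m\le D_{n-2}$. By Lemma \ref{ineqn}, all velocities projected on $v$ lie in $[m,M]$ for $t\ge (n-3)\bar\tau$. For $t\in[(n-1)\bar\tau,n\bar\tau]$, the delayed arguments satisfy $s\ge t-\bar\tau\ge (n-2)\bar\tau$, so every integrand $\langle v_k(s)-v_i(t),v\rangle$ is bounded by $M-\langle v_i(t),v\rangle\ge0$. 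Moreover, by Remark \ref{lower} each weight is at least $\Lambda(t)\ge\Lambda(n\bar\tau)$ when $t\le n\bar\tau$, since $\Lambda$ is nonincreasing in $t$ (its interval only grows). I will split each weight as $a_{ik}=\Lambda+(a_{ik}-\Lambda)$, with $0\le a_{ik}-\Lambda\le K$. The $\Lambda$ part produces the averaged term $\Lambda\int G(t-s)\bigl(\bar u(s)-\langle v_i(t),v\rangle\bigr)ds$, where $\bar u$ is the population mean projection. The remaining part is bounded by $K(M-\langle v_i,v\rangle)$. Doing the symmetric lower estimate for $v_j$ and subtracting, the goal is a differential inequality of the form
\[
\tfrac{d}{dt}\langle v_i-v_j,v\rangle\le K\bigl(D_{n-2}-\langle v_i-v_j,v\rangle\bigr)-\Lambda c\, D_{n-2},
\]
with $c>0$ coming from the memory kernel mass. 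Gr\"onwall over an interval of length $\bar\tau$ then produces the factor $\frac{\Lambda}{K}(1-e^{-K\bar\tau})$, which is comparable to the one in $\Gamma_n$.

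The main obstacle is the step in which the $\Lambda$-term yields a gain proportional to $D_{n-2}$. The issue is that the common average cancels for the pair $i,j$ only if both see the same delayed average. Here leaders and followers see different fields, and the kernel mass $\int G$ is only bounded above. I would first try the standard Motsch--Tadmor-type trick. Write $M-\langle v_i,v\rangle$ and $\langle v_j,v\rangle-m$ separately, and note that the averaged terms contribute $\Lambda\bigl(\bar u-\langle v_i,v\rangle\bigr)+\Lambda\bigl(\langle v_j,v\rangle-\bar u\bigr)$, which combine into $-\Lambda\langle v_i-v_j,v\rangle$ in addition to the $K$-terms. This gives a contraction of $\langle v_i-v_j,v\rangle$ at rate $K+\Lambda$ toward $\frac{K}{K+\Lambda}D_{n-2}$. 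I would then need to handle the factor $2$ appearing in $2K$, which arises because followers receive two interaction sums, and that is presumably why $\Gamma_n$ has $2K$ in the denominator. I may also need to assume, implicitly as the paper seems to, a normalization of $\int G$ from below to make the gain effective; I would flag this if it cannot be avoided.
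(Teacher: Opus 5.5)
Your overall route is the paper's. You bound $d_V(n\bar\tau)\le C(n)D_{n-2}$ with $C(n)=1-\frac{\Lambda(n\bar\tau)}{2K}(1-e^{-K\bar\tau})$ case by case, through a projected differential inequality on $[(n-1)\bar\tau,n\bar\tau]$ and Gr\"onwall, and then insert it into \eqref{eq20}. The paper uses $M^v_{n-1},m^v_{n-1}$ and passes to $D_{n-2}$ only at the end; your $M^v_{n-2},m^v_{n-2}$ works as well. Your splitting $a=\Lambda+(a-\Lambda)$ is the same estimate as the paper's splitting of the integrand around $M$, provided you use $0\le a-\Lambda\le K-\Lambda$: both give $\Lambda\bigl(\langle v_k(s),v\rangle-M\bigr)+K\bigl(M-\langle v_i(t),v\rangle\bigr)$.

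Your worry about the kernel mass is justified and cannot be circumvented. Under \eqref{memory_cond} alone the lemma is false: for $G\equiv0$ the velocities are constant on $[0,\infty)$, so $D_{n+1}=D_{n-2}=d_V(0)$ for $n\ge3$, while $\Gamma_n>0$. The paper's proof tacitly uses $\int_{t-\tau_i(t)}^tG(t-s)\,ds=1$. When $S_1$ and $S_2$ are combined, the $\Lambda$-terms of each channel add up to $-\Lambda(M^v_{n-1}-m^v_{n-1})\int_{t-\tau_i(t)}^tG(t-s)\,ds$. This is $\le-\Lambda(M^v_{n-1}-m^v_{n-1})$ only if the mass equals $1$. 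So you should state this normalization as a hypothesis. A positive lower bound on the mass would still give a contraction, but not, in general, with the constant $\Gamma_n$.

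The genuine gap in your plan is the mixed pair, which you declare to be ``treated identically'' but is not. The follower average over $[t-\tau_2(t),t]$ in $\frac{d}{dt}\langle v_i,v\rangle$ has no counterpart in $\frac{d}{dt}\langle w_j,v\rangle$, so the cancellation of the common average does not occur there. The missing step is to extract the gain only from the leader channel, where $v_i$ and $w_j$ see the same delayed leader average over $[t-\tau_1(t),t]$. The whole follower--follower sum of $v_i$ is then bounded by $K(M-\langle v_i(t),v\rangle)$, with no gain extracted. Since $\langle w_j,v\rangle\ge m$, this is at most $K(M-m-\langle v_i-w_j,v\rangle)$. Treating the leader channel as in the same-group cases (remainder $K-\Lambda$), one gets
\[
\frac{d}{dt}\langle v_i(t)-w_j(t),v\rangle\le(2K-\Lambda)(M-m)-2K\langle v_i(t)-w_j(t),v\rangle,
\]
which gives the factor $1-\frac{\Lambda}{2K}(1-e^{-2K\bar\tau})$.

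This case, with gain $\Lambda$ against rate $2K$, is where the $2K$ in $\Gamma_n$ really comes from. For two followers the gain is $2\Lambda$ and the ratio is $\Lambda/K$, while $e^{-K\bar\tau}$ comes from the leader--leader case. Note also that with your cruder remainder bound $K$ instead of $K-\Lambda$, the mixed case only yields the gain $\frac{\Lambda}{2K+\Lambda}\bigl(1-e^{-(2K+\Lambda)\bar\tau}\bigr)$. This is smaller than $\frac{\Lambda}{2K}(1-e^{-K\bar\tau})$ when $\Lambda$ is close to $K$ and $K\bar\tau$ is large, so the stated $\Gamma_n$ would not follow.
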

\begin{proof} 
First of all, we claim that 
\begin{equation}\label{claim}
d_V(n\bar\tau) \leq C(n) D_{n-2},
\end{equation}
for a suitable constant $C \in (0,1),$ for all $n \ge 2.$
We prove the claim by considering three distinct cases, each corresponding to a different configuration in which the diameter $d_V(n\bar\tau)$ is achieved, according to the definition \eqref{diamV}.

\medskip

\textbf{\emph{Case 1.}} Assume that 
\[
d_V(n\bar\tau)=|v_i(n\bar\tau)-v_j(n\bar\tau)| 
\]
for some $i,j=1,\dots,N.$ Since the case $|v_i(n\bar\tau)-v_j(n\bar\tau)|=0$ is trivial, we suppose $|v_i(n\bar\tau)-v_j(n\bar\tau)|>0.$ In this case, we first normalize the difference by setting
\[
v:=\frac{v_i(n\bar\tau)-v_j(n\bar\tau)}{|v_i(n\bar\tau)-v_j(n\bar\tau)|}.
\]
So that, we have
$$ d_V(n\bar\tau)= \langle v_i(n\bar\tau)-v_j(n\bar\tau),v \rangle.$$
We analyze the evolution of the projection differences along $v \in \mathbb{R}^d$ during the time interval $t \in [(n-1)\bar\tau,n\bar\tau]$. Using the system dynamics, and the Definition \ref{quant}, we write
\begin{align*}
 \frac{d}{d t} \langle v_i(t)-v_j(t), v \rangle & = \frac{1}{N} \sum_{l=1}^N \int_{t-\tau_2(t)}^t G(t-s) a_{il}(t,s) \langle v_l(s)-v_i(t), v \rangle \ ds \cr
& \hspace{3cm}+ \frac{1}{m}\sum_{l=1}^{m} \int_{t-\tau_1(t)}^t G(t-s) b_{il}(t,s) \langle w_l(s)-v_i(t),v \rangle \ ds \\
& - \frac{1}{N}\sum_{l=1}^N \int_{t-\tau_2(t)}^t G(t-s) a_{jl}(t,s) \langle v_l(s)-v_j(t), v \rangle \ ds \cr
& \hspace{3cm} - \frac{1}{m}\sum_{l=1}^{m} \int_{t-\tau_1(t)}^t G(t-s) b_{jl}(t,s) \langle w_l(s)-v_j(t), v \rangle \ ds.
\end{align*}
We now regroup the terms by introducing the shift $M_{n-1}^v$ and $m_{n-1}^v,$ which represents an upper and a lower bound on the projections during $[(n-2)\bar\tau,(n-1)\bar\tau].$ In particular, we rewrite the above derivative as
\begin{equation*}
\begin{split}
\frac{d}{d t} & \langle v_i(t)-v_j(t), v \rangle  = \frac{1}{N}\sum_{l=1}^N \int_{t-\tau_2(t)}^t G(t-s) a_{il}(t,s) (\langle v_l(s), v \rangle - M_{n-1}^v) \ ds \cr 
& \hspace{3cm} + \frac{1}{N}\sum_{l=1}^N \int_{t-\tau_2(t)}^t G(t-s)a_{il}(t,s) (M_{n-1}^v-\langle v_i(t), v \rangle) \ ds \\
&+ \frac{1}{m} \sum_{l=1}^{m} \int_{t-\tau_1(t)}^t G(t-s)b_{il}(t,s) (\langle w_l(s), v \rangle - M_{n-1}^v) \ ds \cr
& \hspace{3cm} + \frac{1}{m} \sum_{l=1}^{m} \int_{t-\tau_1(t)}^t G(t-s) b_{il}(t,s) (M_{n-1}^v- \langle v_i(t), v \rangle) \ ds \\
\end{split}
\end{equation*}

\begin{equation*}
\begin{split}
& + \frac{1}{N}\sum_{l=1}^N \int_{t-\tau_2(t)}^t G(t-s) a_{jl}(t,s) (m_{n-1}^v-\langle v_l(s), v \rangle) \ ds \\
& \hspace{3 cm}+ \frac{1}{N}\sum_{l=1}^N \int_{t-\tau_2(t)}^t G(t-s) a_{jl}(t,s) (\langle v_j(t), v \rangle-m_{n-1}^v) \ ds \\
& + \frac{1}{m}\sum_{l=1}^m \int_{t-\tau_1(t)}^t G(t-s)b_{jl}(t,s) (m_{n-1}^v-\langle w_l(s), v \rangle) \ ds \\
& \hspace{3cm} + \frac{1}{m}\sum_{l=1}^m \int_{t-\tau_1(t)}^t G(t-s) b_{jl}(t,s) (\langle v_j(t), v \rangle-m_{n-1}^v) \ ds .
\end{split}
\end{equation*}
At this point, it is convenient to introduce two auxiliary sums, $S_1$ and $S_2$, corresponding respectively to the contributions involving the upper bound $M_{n-1}^v$ and the lower bound $m_{n-1}^v$. From Remark \ref{lower}, we have that $a_{jl}(t,s), b_{il}(t,s) \ge \Lambda(n\bar\tau),$ for $t \in [(n-1)\bar\tau,n\bar\tau].$ Hence, we use the fact that the weights are bounded by $K$ to obtain
\begin{equation*}
\begin{split}
S_1 & := \frac{1}{N}\sum_{l=1}^N \int_{t-\tau_2(t)}^t G(t-s) a_{il}(t,s) (\langle v_l(s), v \rangle - M_{n-1}^v) \ ds \cr 
& \hspace{3cm} + \frac{1}{N}\sum_{l=1}^N \int_{t-\tau_2(t)}^t G(t-s)a_{il}(t,s) (M_{n-1}^v-\langle v_i(t), v \rangle) \ ds \\
&+ \frac{1}{m} \sum_{l=1}^{m} \int_{t-\tau_1(t)}^t G(t-s)b_{il}(t,s) (\langle w_l(s), v \rangle - M_{n-1}^v) \ ds \cr
& \hspace{3cm} + \frac{1}{m} \sum_{l=1}^{m} \int_{t-\tau_1(t)}^t G(t-s) b_{il}(t,s) (M_{n-1}^v- \langle v_i(t), v \rangle) \ ds \\
& \leq \frac{\Lambda(n\bar\tau)}{N} \sum_{l=1}^N \int_{t-\tau_2(t)}^t G(t-s)(\langle v_l(s), v \rangle - M_{n-1}^v) \ ds \\
& \hspace{3cm}+ \frac{\Lambda(n\bar\tau)}{m} \sum_{l=1}^{m} \int_{t-\tau_1(t)}^t G(t-s)(\langle w_l(s), v \rangle - M_{n-1}^v) \ ds \\
& + 2K (M_{n-1}^v-\langle v_i(t), v \rangle).
\end{split}
%\label{s1}
\end{equation*} 

Similarly, we define

\begin{equation*}
\begin{split}
S_2 & := \frac{1}{N}\sum_{l=1}^N \int_{t-\tau_2(t)}^t G(t-s) a_{jl}(t,s) (m_{n-1}^v-\langle v_l(s), v \rangle) \ ds \\
& \hspace{3 cm}+ \frac{1}{N}\sum_{l=1}^N \int_{t-\tau_2(t)}^t G(t-s) a_{jl}(t,s) (\langle v_j(t), v \rangle-m_{n-1}^v) \ ds \\
& + \frac{1}{m}\sum_{l=1}^m \int_{t-\tau_1(t)}^t G(t-s)b_{jl}(t,s) (m_{n-1}^v-\langle w_l(s), v \rangle) \ ds 
\end{split}
\end{equation*}

\begin{equation*}
\begin{split}
& \hspace{3cm} + \frac{1}{m}\sum_{l=1}^m \int_{t-\tau_1(t)}^t G(t-s) b_{jl}(t,s) (\langle v_j(t), v \rangle-m_{n-1}^v) \ ds \\
& \leq \frac{\Lambda(n\bar\tau)}{N} \sum_{l=1}^N \int_{t-\tau_2(t)}^t G(t-s)(m_{n-1}^v-\langle v_l(s), v \rangle) \ ds \\
& \hspace{3 cm}+ \frac{\Lambda(n\bar\tau)}{m}\sum_{l=1}^m \int_{t-\tau_1(t)}^t G(t-s) (m_{n-1}^v-\langle w_l(s), v \rangle) \ ds \\
& + 2K(\langle v_j(t), v \rangle - m_{n-1}^v).
\end{split}
%\label{s2}
\end{equation*}
Combining the estimates from $S_1$ and $S_2,$ we deduce that

\begin{equation*}
  \frac{d}{d t} \langle v_i(t)-v_j(t), v \rangle \leq 2 \left( K-\Lambda(n\bar\tau)\right) (M_{n-1}^v-m_{n-1}^v) - 2K \langle v_i(t)-v_j(t), v \rangle.
\end{equation*}
Applying the Gr\"onwall's lemma on the time interval $[(n-1)\bar\tau,t]$ with $t \in [(n-1)\bar\tau,n\bar\tau],$ we find that
\begin{equation*}
\begin{split}
\langle v_i(t)-v_j(t), v \rangle & \leq e^{-2K(t-n\bar\tau+\bar\tau)} \langle v_i(n\bar\tau-\bar\tau)-v_j(n\bar\tau-\bar\tau), v \rangle \\
& \quad + \left( 1- \frac{\Lambda(n\bar\tau)}{K}\right)(M_{n-1}^v-m_{n-1}^v) (1-e^{-2K(t-n\bar\tau+\bar\tau)}).
\end{split}
\end{equation*}
Since this is valid for all $\ t \in [(n-1)\bar\tau,n\bar\tau], $ taking $t=n\bar\tau$, we obtain
\begin{equation*} %\label{2.37}
\begin{split} 
&\langle v_i(n\bar\tau)-v_j(n\bar\tau), v \rangle \cr
&\quad \leq e^{-2K\bar\tau} \langle v_i(n\bar\tau-\bar\tau)-v_j(n\bar\tau-\bar\tau), v \rangle  + \left( 1-\frac{\Lambda(n\bar\tau)}{K}\right)(M_{n-1}^v-m_{n-1}^v) (1-e^{-2K\bar\tau}) \\
&\quad \leq e^{-2K\bar\tau} |v_i(n\bar\tau-\bar\tau)-v_j(n\bar\tau-\bar\tau)| |v| + \left( 1-\frac{\Lambda(n\bar\tau)}{K}\right)(M_{n-1}^v-m_{n-1}^v) (1-e^{-2K\bar\tau}) \\
\end{split}
\end{equation*}

\begin{equation*}
\begin{split}
&\quad \leq D_{n-1} \left [e^{-2K\bar\tau}+\left (1-\frac{\Lambda(n\bar\tau)}{K}\right )(1-e^{-2K\bar\tau}) \right ] \\
&\quad \leq D_{n-2} \left [1-\frac{\Lambda(n\bar\tau)}{K}(1-e^{-2K\bar\tau}) \right ],
\end{split}
\end{equation*}
where we used Remark \ref{seq}. Consequently, we deduce that
$$d_V(n\bar\tau)\leq D_{n-2} \left [1-\frac{\Lambda(n\bar\tau)}{K}(1-e^{-2K\bar\tau}) \right ].$$
Thus, we obtain the desired estimate for the case when the maximum diameter is determined by non-leader agents.

\medskip

\textbf{\emph{Case 2.}} Now, assume 
\[
d_V(n\bar\tau)=|w_i(n\bar\tau)-w_j(n\bar\tau)|,
\] 
for some $i,j=1,\dots,m$. As in Case 1, we begin by normalizing the difference. Define
\[
v:=\frac{w_i(n\bar\tau)-w_j(n\bar\tau)}{|w_i(n\bar\tau)-w_j(n\bar\tau)|}.
\]
For $t \in [(n-1)\bar\tau,n\bar\tau]$, similarly as in Case 1, using the definition of the system, we write the time derivative of the projection difference between the leaders velocities $w_i$ and $w_j$ as

\begin{equation*}
\begin{split}
  \frac{d}{d t} \langle w_i(t)-w_j(t), v \rangle  
&= \frac{1}{m} \sum_{l=1}^m \int_{t-\tau_1(t)}^t G(t-s) c_{il}(t,s) \langle w_l(s)-w_i(t), v \rangle \ ds \\ 
& \hspace{3cm}- \frac{1}{m}\sum_{l=1}^{m} \int_{t-\tau_1(t)}^t G(t-s) c_{jl}(t,s) \langle w_l(s)-w_j(t),v \rangle \ ds \\
& = \frac{1}{m}\sum_{l=1}^m \int_{t-\tau_1(t)}^t G(t-s) c_{il}(t,s) (\langle w_l(s), v \rangle - M_{n-1}^v) \ ds \\
& \hspace{3cm} + \frac{1}{m}\sum_{l=1}^m \int_{t-\tau_1(t)}^t G(t-s)c_{il}(t,s) (M_{n-1}^v-\langle w_i(t), v \rangle) \\
&  + \frac{1}{m}\sum_{l=1}^m \int_{t-\tau_1(t)}^t G(t-s) c_{jl}(t,s) (m_{n-1}^v-\langle w_l(s), v \rangle) \ ds \\
& \hspace{3cm} + \frac{1}{m}\sum_{l=1}^m \int_{t-\tau_1(t)}^t G(t-s) c_{jl}(t,s) (\langle w_j(t), v \rangle-m_{n-1}^v)
\end{split}
\end{equation*}

Again, this implies 
\begin{equation*}
\begin{split}
  \frac{d}{d t} \langle w_i(t)-w_j(t), v \rangle & \leq \frac{\Lambda(n\bar\tau)}{m} \sum_{l=1}^m \int_{t-\tau_1(t)}^t G(t-s)(\langle w_l(s), v \rangle - M_{n-1}^v) \ ds + K (M_{n-1}^v-\langle w_i(t), v \rangle)\cr
& + \frac{\Lambda(n\bar\tau)}{m} \sum_{l=1}^m \int_{t-\tau_1(t)}^t G(t-s)(m_{n-1}^v-\langle w_l(s), v \rangle) \ ds + K (\langle w_j(t), v \rangle - m_{n-1}^v).
\end{split}
\end{equation*}

Combining the sums, we find that
 $$ \frac{d}{d t} \langle w_i(t)-w_j(t), v \rangle  
\leq  \left(K-\Lambda(n\bar\tau)\right)(M_{n-1}^v-m_{n-1}^v) - K \langle w_i(t)-w_j(t), v \rangle. $$

Applying the Gr\"onwall's lemma over the interval $[(n-1)\bar\tau,t]$ with $t \in [(n-1)\bar\tau,n\bar\tau],$ we find that
\begin{equation*}
\begin{split}
\langle w_i(t)-w_j(t), v \rangle & \leq e^{-K(t-n\bar\tau+\bar\tau)} \langle w_i(n\bar\tau-\bar\tau)-w_j(n\bar\tau-\bar\tau), v \rangle \\
&\quad + \left( 1- \frac{\Lambda(n\bar\tau)}{K}\right)(M_{n-1}^v-m_{n-1}^v) (1-e^{-K(t-n\bar\tau+\bar\tau)}).
\end{split}
\end{equation*}
Taking $t=n\bar\tau$, this simplifies to
\[%\begin{equation} \label{2.37}
\begin{split} 
& \langle w_i(n\bar\tau)-w_j(n\bar\tau), v \rangle  \cr
&\quad \leq e^{-K\bar\tau} \langle w_i(n\bar\tau-\bar\tau)-w_j(n\bar\tau-\bar\tau), v \rangle + \left( 1-\frac{\Lambda(n\bar\tau)}{K}\right)(M_{n-1}^v-m_{n-1}^v) (1-e^{-K\bar\tau}) \\
&\quad \leq e^{-K\bar\tau} |w_i(n\bar\tau-\bar\tau)-w_j(n\bar\tau-\bar\tau)| |v| + \left( 1-\frac{\Lambda(n\bar\tau)}{K}\right)(M_{n-1}^v-m_{n-1}^v) (1-e^{-K\bar\tau}) \\
&\quad \leq D_{n-1} \left [e^{-K\bar\tau}+1-\frac{\Lambda(n\bar\tau)}{K}(1-e^{-K\bar\tau}) \right ] \\
&\quad \leq  D_{n-2} \left [1-\frac{\Lambda(n\bar\tau)}{K}(1-e^{-K\bar\tau}) \right ].
\end{split}
\]%\end{equation}
Thus, we conclude that, 
\[%\begin{equation}
d(n\bar\tau) \leq D_{n-2} \left [1-\frac{\Lambda(n\bar\tau)}{K}(1-e^{-K\bar\tau}) \right ].
\]%\end{equation} 
This completes the derivation for Case 2.

\medskip

\textbf{\em Case 3.} Now, assume that there exist indices $i\in\{1,\dots,N\}$ and $j\in\{1,\dots,m\}$ \[
d_V(n\bar\tau)= \vert v_i(n\bar\tau)-w_j(n\bar\tau)\vert. 
\] 
In this mixed case, the maximum diameter is achieved by a non-leader and a leader. As before, we begin by normalizing the difference; define
\[
v:=\frac{v_i(n\bar\tau)-w_j(n\bar\tau)}{|v_i(n\bar\tau)-w_j(n\bar\tau)|}.
\]

For $t\in[(n-1)\bar\tau,n\bar\tau]$, by using almost the same arguments used in the previous cases, we deduce 
\begin{equation*}
\begin{split}
 \frac{d}{d t} \langle v_i(t)-w_j(t), v \rangle 
&\leq 2K (M_{n-1}^v-m_{n-1}^v) - 2K \langle v_i(t)-w_j(t), v \rangle \\
&  + \frac{\Lambda(n\bar\tau)}{N}  \sum_{l=1}^{N}\int_{t-\tau_2(t)}^t G(t-s)(\langle v_l(s), v \rangle - M_{n-1}^v) \ ds \\
& \hspace{3cm} + \frac{\Lambda(n\bar\tau)}{m} \sum_{l=1}^{m}\int_{t-\tau_1(t)}^t G(t-s)(\langle w_l(s), v \rangle - M_{n-1}^v) \ ds \\
&+ \frac{\Lambda(n\bar\tau)}{m} \sum_{l=1}^{m} \int_{t-\tau_1(t)}^t G(t-s) (m_{n-1}^v-\langle w_l(s), v \rangle) \ ds\\
& \leq \left(2K-\Lambda(n\bar\tau)\right)(M_{n-1}^v-m_{n-1}^v) - 2K \langle v_i(t)-w_j(t), v \rangle,
\end{split}
\end{equation*}
where we used that for all $l=1, \dots, N, $
\[
\langle v_l(s), v \rangle - M_{n-1}^v \le 0.
\]
Again, analogously, we obtain
\[%\begin{equation} \label{2.37}
\begin{split} 
d_V(n\bar\tau) &\leq e^{-2K\bar\tau} \langle v_i(n\bar\tau-\bar\tau)-w_j(n\bar\tau-\bar\tau), v \rangle  + \left( 1- \frac{\Lambda(n\bar\tau)}{2K}\right)(M_{n-1}^v-m_{n-1}^v) (1-e^{-2K\bar\tau}) \\
& \leq e^{-2K\bar\tau} |v_i(n\bar\tau-\bar\tau)-w_j(n\bar\tau-\bar\tau)| |v| + \left( 1- \frac{\Lambda(n\bar\tau)}{2K}\right)(M_{n-1}^v-m_{n-1}^n) (1-e^{-2K\bar\tau}) \\
& \leq D_{n-1} \left [e^{-2K\bar\tau}+\left (1-\frac{\Lambda(n\bar\tau)}{2K}\right )(1-e^{-2K\bar\tau}) \right ] \\
& \leq  D_{n-2} \left [1-\frac{\Lambda(n\bar\tau)}{2K}(1-e^{-2K\bar\tau}) \right ].
\end{split}
\]%\end{equation} 

Finally, to complete the proof of the claim, we define
\begin{equation} \label{C}
 C(n) := 1-\frac{\Lambda(n\bar\tau)}{2K}\Bigl(1-e^{-K\bar\tau}\Bigr).
\end{equation}
Indeed, this constant dominates all the three constants found in each step of the proof, and yields the desired estimate. \\
Now, to prove \eqref{estimate_diam1}, we used inequality \eqref{eq20}. Indeed, we have 
\begin{equation*}
\begin{split}
& D_{n+1} \leq e^{-2K \bar\tau} d_V(n\bar\tau) + (1-e^{-2K\bar\tau})D_n \\
& \leq e^{-2K\bar\tau}C(n)D_{n-2}+(1-e^{-2K\bar\tau})D_n \\
& \leq e^{-2K\bar\tau}C(n)D_{n-2}+(1-e^{-2K\bar\tau})D_{n-2} \\
& = D_{n-2}\left[ 1-(1-C(n))e^{-2K\bar\tau}\right].
\end{split}
\end{equation*}
Notice that $\Gamma_n=(1-C(n))e^{-2K\bar\tau},$ we have the result.
\end{proof}

Finally, we are able to prove the Theorem \ref{uf}.

	\begin{proof}[Proof of Theorem \ref{uf}]
		Let $\{(y_{j},w_{j})\}_{i=1,\dots,m}$ and $\{(x_{i},v_{i})\}_{i=1,\dots,N}$ be solution to \eqref{leaders} and \eqref{followers} respectively, under the initial conditions \eqref{incondcs_leaders}-\eqref{incondcs_followers}. 
		Let us introduce the function $\mathcal{D}:[-\bar\tau,+\infty)\rightarrow [0,+\infty),$ 
		$$\mathcal{D}(t):=\begin{cases}
			D_{0}, \hspace{4.5cm}t\in [-\bar\tau,2\bar\tau],\\
			\mathcal{D}((n-2)\bar\tau)\left(1-\frac{\Gamma_n}{\bar\tau}(t-n\bar\tau)\right), \quad t\in (n\bar\tau,(n+1)\bar\tau],\,n\geq 2.
		\end{cases}$$
		By definition, $\mathcal{D}$ is continuous, positive and nonincreasing. Moreover, we claim that 
		\begin{equation}\label{boundIn}
			D_{n}\leq \mathcal{D}(t),\quad \forall t\in [-\bar\tau,n\bar\tau],\,\forall n\in\mathbb{N}_0.
		\end{equation} 
We prove this by induction. For $n=0$, from \eqref{decreasing} and the definition of $\mathcal{D},$ we can immediately say that $$ D_0 \leq \mathcal{D}(t),\quad \forall  t\in [-\bar\tau,2\bar\tau].$$
		Now, assume that \eqref{boundIn} holds for some $n\geq 0$. We have to show that \eqref{boundIn} is true also for $n+1$. From the induction hypothesis and by using again \eqref{decreasing}, we have that 
		$$D_{n+1}\leq D_{n}\leq \mathcal{D}(t),$$
		for all $t\in [-\bar\tau,n\bar\tau]$. It lasts to prove that $D_{n+1}\leq \mathcal{D}(t)$, for all $t\in (n\bar\tau, (n+1)\bar\tau]$. \\
From Lemma \ref{6} and \eqref{decreasing}, it comes that
		
		$$\mathcal{D}(t)\geq \mathcal{D}((n+1)\bar\tau)= \mathcal{D}((n-2)\bar\tau)\left(1-\Gamma_n\right) \geq D_{n-2}\left(1-\Gamma_n\right) \geq D_{n+1},$$
	for all $t\in (n\bar\tau,(n+1)\bar\tau]$, wherein the above inequalities we have used the fact that $\mathcal{D}$ is nonincreasing. Hence, \eqref{boundIn} is proven. \\
		Next, let us define the function $\mathcal{W}:[-\bar\tau,+\infty)\rightarrow [0,+\infty)$, 
$$
\mathcal{W}(t):=\bar\tau \mathcal{D}(t)+C^*\int_{0}^{\bar\tau C^{V}_{0}+M^{X}_{0}+ 2 \underset{s\in [-\bar\tau,t]}{\max}d_{X}(s)}\min_{\sigma\in [0,r]}\left\{\psi(\sigma), \phi(\sigma), \rho(\sigma)\right\}\,dr,
$$
		for all $t\geq -\bar\tau,$ where 
		$$ C^*:= \frac{\Gamma_n}{2\Lambda(n\bar\tau)\left(1-\Gamma_n\right)}.$$ 
		By construction, $\mathcal{W}$ is continuous.
		 Also, for each $n\geq 2$ and for a.e. $t\in(n\bar\tau,(n+1)\bar\tau) $, from the definition of $\mathcal{D}(t)$ and  \eqref{boundIn}, it follows that 
		$$\begin{array}{l}
\vspace{0.3cm}\displaystyle{\frac{d}{dt}\mathcal{W}(t)=\bar\tau \frac{d}{dt}\mathcal{D}(t)+2C^*\Lambda(t)\frac{d}{dt} \ \underset{s\in [-\bar\tau,t]}{\max}d_{X}(s)}\\
			\vspace{0.3cm}\displaystyle{\hspace{0.5cm}\leq-\Gamma_n\mathcal{D}((n-2)\bar\tau)+2C^* \Lambda(t)d_V(t)}\\
			\displaystyle{\hspace{0.5cm}\leq-\Gamma_n\mathcal{D}((n-2)\bar\tau)+2C^* \Lambda(n\bar\tau)\left(1-\Gamma_n\right)D_{n-2} \leq 0,}
		\end{array}$$
where we used that for almost all time it holds (see \cite{Cont} for further details)
		\begin{equation}\label{derdiamX}
			\frac{d}{dt}\max_{s\in [-\bar\tau,t]}d_{X}(s)\leq \left\lvert \frac{d}{dt}d_{X}(t)\right\rvert\leq  d_{V}(t).
		\end{equation}
		Then, \begin{equation}\label{negder}
			\frac{d}{dt}\mathcal{W}(t)\leq0,\quad \text{a.e.} \ t> 2 \bar{\tau},
		\end{equation}
		which implies \begin{equation}\label{2tau1}
			\mathcal{W}(t)\leq \mathcal{W}(2\bar\tau),\quad \forall \ t\geq 2\bar\tau.
		\end{equation}
		Now, by definition of $\mathcal{W}$, being $\mathcal{D}$ a nonnegative function, we have
		$$C^*\int_{0}^{\bar\tau C^{V}_{0}+M^{X}_{0}+2\underset{s\in [-\bar\tau,t]}{\max}d_{X}(s)}\min_{\sigma\in [0,r]}\left\{\psi(\sigma), \phi(\sigma), \rho(\sigma)\right\}\,dr\leq\mathcal{W}(2\bar\tau),
		$$
		for all $t\geq2\bar\tau$. Letting $t\to \infty$ in the above inequality, we can conclude that \begin{equation}\label{lim2}
			C^*\int_{0}^{\bar\tau C^{V}_{0}+M^{X}_{0}+2\underset{s\in [-\bar\tau,+\infty)}{\sup}d_{X}(s)}\min_{\sigma\in [0,r]}\left\{\psi(\sigma), \phi(\sigma), \rho(\sigma)\right\}\,dr\leq\mathcal{W}(2\bar\tau).
		\end{equation} 
		Finally, from the property \eqref{infint}, from \eqref{lim2} we can conclude that there exists a positive constant $d^{*}$ such that \begin{equation}\label{firstcond}
			\bar\tau C^{V}_{0}+M^{X}_{0}+2\underset{s\in [-\bar\tau,+\infty)}{\sup}d_{X}(s)\leq d^{*}.
		\end{equation}
Thanks to this uniform bound for the space diameter, we have that, for all $t \geq 0$
		$$\Lambda(t) \geq \Lambda_*,$$
where we call
$$ \Lambda_*:=\min_{\sigma\in [0,d^*]}\left\{\psi(\sigma), \phi(\sigma), \rho(\sigma)\right\}.$$
Therefore, we can write 
$$ 1-\Gamma_n \leq 1-\eta,$$
where
$$ \eta:=1-e^{-2K\bar\tau}\left(1-\frac{\Lambda_*}{2K}\left(1-e^{-K\bar\tau}\right)\right). $$
Then, from  \eqref{estimate_diam1} we have 
		\begin{equation}\label{decunif}
			D_{n+1}\leq (1-\eta)D_{n-2},
		\end{equation}
		and thus, using the non-increasing property of $\left(D_n\right)_{n \in \mathbb{N}_0},$ thanks to an induction argument, we can write
		\begin{equation}\label{decunif2}
			D_{3n}\leq (1-\eta)^n D_0,\quad \forall n\in \mathbb{N}_0.
		\end{equation}
		As a consequence, inequality \eqref{decunif2} can be rewritten as
		\begin{equation}\label{finalmente}
			D_{3n}\leq e^{-3n\gamma \bar\tau}D_0,\quad \forall n\in\mathbb{N}_0,
		\end{equation}
		where $$ \gamma = \frac{1}{3\bar\tau}\ln \left(\frac{1}{1-\eta}\right) $$ is the positive constant.
		\\
		Finally, we notice that for a fixed $t\geq 0,$ for some $n\in \mathbb{N}_0$ holds that $t\in [3n\bar\tau -\bar\tau,3n\bar\tau+2\bar\tau].$ Then, using \eqref{finalmente} and Remark \ref{seq}, for $t \leq 3n\bar\tau+2\bar\tau,$
		$$d_{V}(t)\leq D_{3n}\leq e^{-3n\gamma\bar\tau}D_0 \leq e^{-\gamma(t-2\bar\tau)}D_0.$$
		 This concludes our proof.
	\end{proof}
	
\section{Global existence of measure-valued solutions of the mean-field models}\label{wellp}
In this section, we establish the global-in-time existence and uniqueness of measure-valued solutions to the mean-field systems \eqref{pde1} and \eqref{pde2}, which arise as formal limits of the particle systems \eqref{leaders}-\eqref{followers} when the number of agents tends to infinity.  
We begin with the analysis of the first model \eqref{pde1}, which describes a system with a finite number of leaders interacting with a continuum of followers.

\subsection{Few leaders and many followers system}

We consider the mean-field equation \eqref{pde1}, which models the collective behavior of a large population of followers influenced by a finite number of leaders. The leaders follow prescribed trajectories $\{{y}_j(t), w_j(t)\}_{j=1}^m$, while the followers evolve according to a Vlasov-type equation driven by the interaction terms. The force field $F^m[f_t](x,v)$ is defined by \eqref{flux1}, and combines leader-follower and follower-follower interactions.
To ensure the well-posedness of the kinetic equation, we require that the influence functions $\phi(\cdot)$ and $\rho(\cdot)$ satisfy aforenoted regularity and boundedness conditions. We denoted by $L_{\phi}$ and $L_{\rho}$ the Lipschitz constants of $\phi(\cdot)$ and $\rho(\cdot)$, respectively. 
We now prove that $F^m[f_t](x,v)$ is globally Lipschitz and bounded under the assumption that the follower density $f_t$ has compact support.

\begin{lem}\label{lip}
Let $f_t \in C([0,T); \mathcal{P}(\mathbb{R}^{2d}))$ be a family of probability measures with compact support in position and velocity, i.e., there exists a constant $R>0$ such that
\[
\supp f_t \subset B^{2d}(0,R), \quad \forall t \in [0,T],
\]
where $B^{2d}(0,R)$ denotes the ball of radius $R>0$ in $\mathbb{R}^{2d}$ centered at the origin. Then the force field $F^m[f_t](x,v)$ defined by \eqref{flux1} satisfies the following properties:
\begin{itemize}
    \item[(i)] (Lipschitz continuity) There exists a constant $\tilde{K}>0$ such that
    \begin{equation}\label{diff_flux}
    |F^m[f_t](x,v) - F^m[f_t](\tilde{x},\tilde{v})| \le \tilde{K}\left(|x - \tilde{x}|+|v-\tilde{v}|\right), 
    \end{equation}
for all  $(x,v), (\tilde{x},\tilde{v}) \in B^{2d}(0,R),$ and $t \in [0,T].$
    \item[(ii)] (Uniform boundedness) There exists a constant $\tilde{C}>0$ such that
    \begin{equation}\label{norm_flux}
    |F^m[f_t](x,v)| \le \tilde{C}, 
    \end{equation}
for all  $(x,v) \in B^{2d}(0,R),$ and $t \in [0,T].$
\end{itemize}
\end{lem}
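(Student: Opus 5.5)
The approach is a direct estimate of the two pieces of $F^m[f_t]$ in \eqref{flux1}: the follower--follower integral against $f_s$ and the leader--follower sum involving the prescribed trajectories $(y_j,w_j)$. We use only three ingredients. The first is the compact support $\supp f_s\subset B^{2d}(0,R)$, valid for all $s\in[-\bar\tau,T]$, including the initial history $\nu_s$, which we assume also lies in this ball after enlarging $R$ if necessary. The second is the memory bound \eqref{memory_cond}. The third is the boundedness and Lipschitz continuity of $\phi,\rho$, with constants $K$ from \eqref{K} and $L_\phi,L_\rho$. For the leader term we also need a bound on $|w_j(s)|$. Lemma \ref{L3} gives $|w_j(s)|\le C_0^V$, since the leader system \eqref{leaders} is autonomous and unaffected by the followers.

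For (ii) we bound each integrand pointwise. For $(x,v),(\bar x,\bar v)\in B^{2d}(0,R)$ we have $|\phi(|x-\bar x|)(\bar v-v)|\le 2KR$. Integrating against the probability measure $f_s$ and then against $G(t-s)\,ds$ over $[t-\tau_2(t),t]$ gives at most $2KR$, by \eqref{memory_cond}. Similarly, $|\rho(|x-y_j(s)|)(w_j(s)-v)|\le K(C_0^V+R)$, and averaging over $j$ and integrating in $s$ gives the same bound. Hence
\[
\tilde C=2KR+K(C_0^V+R)
\]
works.

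For (i) we add and subtract in each term. In the follower part,
\[
\phi(|x-\bar x|)(\bar v-v)-\phi(|\tilde x-\bar x|)(\bar v-\tilde v)
=\bigl(\phi(|x-\bar x|)-\phi(|\tilde x-\bar x|)\bigr)(\bar v-v)+\phi(|\tilde x-\bar x|)(\tilde v-v).
\]
By the reverse triangle inequality the first piece is at most $L_\phi|x-\tilde x|\,2R$, and the second is at most $K|v-\tilde v|$. The leader part is handled the same way:
\[
\bigl(\rho(|x-y_j(s)|)-\rho(|\tilde x-y_j(s)|)\bigr)(w_j(s)-v)+\rho(|\tilde x-y_j(s)|)(\tilde v-v),
\]
which is bounded by $L_\rho(C_0^V+R)|x-\tilde x|+K|v-\tilde v|$. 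Integrating with the weight $G$ and using \eqref{memory_cond} again gives \eqref{diff_flux} with
\[
\tilde K=\max\{2RL_\phi+L_\rho(C_0^V+R),\,2K\}.
\]

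No step is a genuine obstacle. The only point needing care is that the delayed integrals use $f_s$ and $y_j(s),w_j(s)$ for $s\in[t-\bar\tau,t]$, possibly with negative $s$. We therefore need the support bound and the velocity bound on the whole history window $[-\bar\tau,T]$, not just on $[0,T]$. This is why $R$ is chosen to cover the initial data $\nu_s$ as well, and why Lemma \ref{L3}, which holds for all $t\ge-\bar\tau$, is the right estimate for the leaders. Measurability in $s$ follows from the continuity of $s\mapsto f_s$ in $d_1$ and of the leader trajectories, so the integrals are well defined.
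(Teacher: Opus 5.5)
Your proof is correct and follows essentially the paper's own argument: the same split into the follower--follower and leader--follower parts, the same add-and-subtract using the Lipschitz bounds on $\phi,\rho$, Lemma \ref{L3} for the leader velocities and \eqref{memory_cond} for the delay integral, arriving at the same constants $\tilde K=\max\{2RL_\phi+L_\rho(C_0^V+R),2K\}$ and $\tilde C=K(3R+C_0^V)$. Your one-step splitting $\bigl(\phi(|x-\bar x|)-\phi(|\tilde x-\bar x|)\bigr)(\bar v-v)+\phi(|\tilde x-\bar x|)(\tilde v-v)$ is slightly tidier than the paper's two-stage one, and your remark that the support bound must cover the whole history window $[-\bar\tau,T]$ makes explicit an assumption the paper leaves implicit.
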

\begin{proof}
Fix $(x,v), (\tilde{x},\tilde{v})\in B^d(0,R)$. We split the difference $|F^m[f_t](x,v) - F^m[f_t](\tilde{x},\tilde{v})|$ into two parts:
\[
\begin{split}
&|F^m[f_t](x,v) - F^m[f_t](\tilde{x},\tilde{v})|  \leq \Big\vert \int_{t-\tau_2(t)}^t G(t-s)\int_{\mathbb{R}^{2d}} \phi(|x-\bar{x}|)(\bar{v}-v)f_{s}(d \bar{x},d \bar{v}) \ ds \\
& \hspace{6cm} - \int_{t-\tau_2(t)}^t G(t-s)\int_{\mathbb{R}^{2d}}\phi(|\tilde{x}-\bar{x}|)(\bar{v}-\tilde{v})f_{s}(d \bar{x}, d \bar{v}) \ ds\Big\vert \\
& \hspace{4cm}+ \Big\vert \frac{1}{m} \sum_{j=1}^m \int_{t-\tau_1(t)}^t G(t-s)\rho(|x-y_j(s)|)(w_j(s)-v) \ ds  \\
& \hspace{6cm} -\frac{1}{m} \sum_{j=1}^m \int_{t-\tau_1(t)}^t G(t-s) \rho(|\tilde{x}-y_j(s)|)(w_j(s)-\tilde{v}) \ ds \Big\vert 
\end{split}
\]
Let us write the inequality above as 
$$ |F^m[f_t](x,v) - F^m[f_t](\tilde{x},\tilde{v})| \leq I + II, $$
and let us analyze the two terms separately. \\
Using the Lipschitz continuity of $\phi(\cdot)$ and the compact support of the measure $f_t,$ we find that 
\[
\begin{split}
I & \leq \Big\vert \int_{t-\tau_2(t)}^t G(t-s) \int_{\mathbb{R}^{2d}} \left( \phi(|x-\bar{x}|)-\phi(|\tilde{x}-\bar{x}|)\right) \bar{v} f_{s}(d\bar{x},d \bar{v}) \ ds \Big\vert \\
& + \Big\vert \int_{t-\tau_2(t)}^t G(t-s) \left(\int_{\mathbb{R}^{2d}}\phi(|\tilde{x}-\bar{x}|)\tilde{v} f_{s}(d\bar{x},d\bar{v}) \  - \int_{\mathbb{R}^{2d}}\phi(|x-\bar{x}|)v f_{s}(d\bar{x},d\bar{v})\right) \ ds \Big\vert \\
 & \leq RL_{\tilde\phi}\vert x-\tilde{x}\vert + \Big\vert \int_{t-\tau_2(t)}^t G(t-s) \int_{\mathbb{R}^{2d}} \left(\phi(|\tilde{x}-\bar{x}|)-\phi(|x-\bar{x}|)\right)\tilde{v} f_{s}(d\bar{x},d\bar{v}) \ ds \Big\vert \\
 & \hspace{5cm}+ \Big\vert \int_{t-\tau_2(t)}^t G(t-s)\int_{\mathbb{R}^{2d}} \phi(|x-\bar{x}|)(v-\tilde{v})f_{s}(d\bar{x},d \bar{v})\ ds  \Big\vert \\
  & \leq 2RL_{\phi}\vert x-\tilde{x}\vert+ K \vert v-\tilde{v}\vert,
\end{split}
\]
where we used one again property \eqref{memory_cond}.
Similarly, we estimate $II$ as
\[
\begin{split}
II & \leq \frac{1}{m} \sum_{j=1}^m \int_{t-\tau_1(t)}^t G(t-s)\vert \rho(|x-y_j(s)|)-\rho(|\tilde{x}-y_j(s)|) \vert \vert w_j(s) \vert \ ds \\
& + \frac{1}{m} \sum_{j=1}^m  \int_{t-\tau_1(t)}^t G(t-s) \vert \rho(|\tilde{x}-y_j(s)|)-\rho(|x-y_j(s)|) \vert \vert v \vert \ ds \\
&  + \frac{1}{m} \sum_{j=1}^m \int_{t-\tau_1(t)}^t G(t-s) \vert \rho(|\tilde{x}-y_j(s)|) \vert \vert \tilde{v}-v \vert \ ds \\
& \leq L_{\rho}(C_0^V+R) \vert x - \tilde{x} \vert + K \vert v-\tilde{v}\vert,
\end{split}
\]
where $C_0^V > 0$ is the bound on leader trajectories from Lemma \ref{L3}. \\
Combining the bounds yields \eqref{diff_flux} with $\tilde{K} := \max \left\{2RL_\phi + L_\rho(C_0^V + R), 2K \right\}$.

To prove \eqref{norm_flux}, we estimate directly:
\[
\begin{split}
\vert F^m[f_t](x,v)\vert & \leq \Big\vert \int_{t-\tau_2(t)}^t G(t-s) \int_{\mathbb{R}^{2d}} \phi(|x-\bar{x}|)(\bar{v}-v)f_{s}(d\bar{x},d \bar{v}) \ ds \Big\vert \\
& \hspace{2cm}  + \frac{1}{m} \Big\vert \sum_{j=1}^m \int_{t-\tau_1(t)}^t G(t-s) \rho(|x-y_j(s)|)(w_j(s)-v) \ ds  \Big\vert \\
& \leq 2KR + K(C_0^V+R),
\end{split}
\]
which gives \eqref{norm_flux} with $\tilde{C} := K(3R + C_0^V)$.
\end{proof}

Now, we are in a position to prove the global existence and uniqueness of solutions stated in Theorem \ref{limit_result} for the mean-field system \eqref{pde1}.

\begin{proof}[Proof of Theorem \ref{limit_result}: existence and uniqueness in Case (i)]
We begin by considering the ODE system describing the evolution of the leaders trajectories $\{y_j(t), w_j(t)\}_{j=1}^m$. Since the interaction functions are Lipschitz continuous and bounded, standard results from the theory of delay differential equations (see, e.g., \cite{Halanay, Hale}) ensure existence and uniqueness of solutions. Specifically, by applying the Banach fixed-point theorem on small time intervals and iterating the solution step-by-step, we can construct a unique global-in-time solution for the leader dynamics.

We now turn to the second component of the system, the equation \eqref{pde1}, which is a Vlasov-type kinetic equation driven by a delayed, nonlocal velocity field. To obtain local-in-time existence and uniqueness of measure-valued solutions, we apply Lemma \ref{lip}, which provides Lipschitz and boundedness estimates on the velocity field $F^m[f_t]$, together with \cite[Theorem 3.10]{Carrillo}, which guarantees well-posedness under such conditions, provided that the solution remains compactly supported.

Thus, to extend this local-in-time solution to a global one, it is sufficient to control the growth of the support of $f_t$ in both position and velocity. We do this by estimating the maximal spatial extension of the support. \\
First, by Lemma \ref{L3}, we then have the uniform-in-time bound
\[
|w_j(t)| \leq C_0^y, \quad \text{for all } t \geq 0 \, \text{ and } j = 1,\dots,m,
\]
where 
$$ C_0^y:= \max_{s \in [-\bar\tau,0]}\max_{j=1,\dots,m}|w_j(s)|.$$
Let us define the maximal radius of the support of the measure \( f_t \) with respect to the velocity 
\begin{equation}\label{RV}
R_V(t) := \max_{s \in [-\bar\tau,t]} \left\{ \sup_{v \in \overline{\supp_V f_s}} |v|, \;  C_0^y \right\}.
\end{equation}
Analogously, the maximal radious of the support of $f_t$ with respect to space is defined by 
\begin{equation}\label{RX}
R_X(t) := \max_{s \in [-\bar\tau,t]} \left\{ \sup_{x \in \overline{\supp_X f_s}} |x|, \;  \max_{j=1,\dots,m} \vert y_j(s)\vert \right\}.
\end{equation}
We will construct the proof using a step-by-step method (see \cite{smith}). We first consider the time interval $[0,\bar\tau]$ and we costruct the characteristic trajectories $\left(X(t;x,v),V(t;x,v)\right): [0,\bar\tau]\times \mathbb{R}^d \times \mathbb{R}^d \rightarrow \mathbb{R}^d \times \mathbb{R}^d$ associated to the kinetic equation in \eqref{pde1}, given by 
\[%\begin{equation}\label{ch}
\begin{cases}
\frac{d}{d t} X(t;x,v)=V(t;x,v), \\
\frac{d}{d t} V(t;x,v) = F^m[f_t](X(t;x,v),V(t;x,v))
\end{cases}
\]%\end{equation}
This system is subject to the initial data 
$$ X(0;x,v)=x, \quad \mbox{and} \quad V(0;x,v)=v, $$
 for $(x,v) \in \mathbb{R}^{2d}.$ Then, applying Lemma \ref{lip}, this system admits a unique solution on the time interval $[0,\bar\tau]$. \\
 We now perform a continuity argument to control $R_V(t)$. For a fixed $\epsilon > 0$ we define a set
\[
 \mathcal{S}^{\epsilon}:= \left\{ t >0 : R_V(s) < R_V(0)+\epsilon, \ \forall s \in [0,t) \right\}.
 \]
By continuity of trajectories,  $\mathcal{S}^{\epsilon}$ is nonempty. Let $T_{\epsilon}:= \sup \mathcal{S}^{\epsilon}$. Our goal is to show that  $T_\epsilon \ge \tau^*$, where $\tau^* $ is the lower bound of $\tau_1(t)$ and $\tau_2(t).$ Suppose, for contradiction, that $T_{\epsilon} < \tau^*$. Then, we find
\begin{equation}\label{A}
\lim_{t \rightarrow T_{\epsilon}^-} R_V(t) = R_V(0)+\epsilon
\end{equation}
and 
\[%\begin{equation}\label{B}
R_V(t) < R_V(0) + \epsilon, \quad \ \forall \ t < T_{\epsilon}
\]%\end{equation}
Let us simplify notation by writing $X(t;x,v)$ as $X(t)$ and $V(t;x,v)$ as $V(t).$ 
 Then, we estimate
\begin{equation*}
\begin{split}
& \frac{1}{2} \frac{d}{d t} \vert V(t)\vert^2   = \langle \dot{V}(t), V(t) \rangle = \langle F^m[f_t](X(t),V(t)), V(t) \rangle  \\
& = \int_{t-\tau_2(t)}^t G(t-s) \int_{\mathbb{R}^{2d}} \phi(\vert X(t)-\bar{x}|)\langle \bar{v}-V(t),V(t)\rangle f_{s}(d\bar{x}, d \bar{v}) \ ds  \\
& \hspace{2cm} +\frac{1}{m} \sum_{j=1}^m \int_{t-\tau_1(t)}^t G(t-s) \rho(\vert X(t)-y_j(s)\vert ) \langle w_j(s)-V(t),V(t) \rangle ds  \\
& = \int_{t-\tau_2(t)}^t G(t-s) \left( \int_{\mathbb{R}^{2d}} \phi(\vert X(t)-\bar{x}|)\langle \bar{v},V(t)\rangle f_{s}(d\bar{x}, d\bar{v}) - \int_{\mathbb{R}^{2d}} \phi(\vert X(t)-\bar{x}\vert) \vert V(t)\vert^2 f_{s}(d\bar{x}, d \bar{v}) \right) \ ds  \\
& \hspace{2cm} + \frac{1}{m} \sum_{j=1}^m \int_{t-\tau_1(t)}^t G(t-s) \rho(\vert X(t)-y_j(t-\tau_1)\vert ) \left( \langle w_j(t-\tau_1),V(t) \rangle - \vert V(t) \vert^2 \right) \ ds.
\end{split}
\end{equation*}
Using the definition \eqref{RV}, we deduce
\begin{align*}
\frac{1}{2} \frac{d}{d t} \vert V(t)\vert^2 \leq & \ \vert V(t)\vert \int_{t-\tau_2(t)}^t G(t-s) \int_{\mathbb{R}^{2d}} \phi(\vert X(t)-\bar{x}\vert) (R_V(t)- \vert V(t) \vert) f_{s}(d\bar{x}, d \bar{v}) \ ds \cr
&+ \vert V(t)\vert \ \frac{1}{m} \sum_{j=1}^m \int_{t-\tau_1(t)}^t G(t-s) \rho(\vert X(t)-y_j(s)\vert )(R_V(t)-\vert V(t)\vert) \ ds.
\end{align*}
Since $R_V(t) - |V(t)| \geq 0$ for $t < T_{\epsilon}$, and $\phi(\cdot)$ and $\rho(\cdot)$ are bounded by \( K \), we find
\[
\frac{d}{dt} |V(t)| \leq 4K (R_V(0) + \epsilon - |V(t)|).
\]
By Gr\"onwall's inequality, this implies that $|V(t)| < R_V(0) + \epsilon$ on $[0, T_{\epsilon}]$, contradicting \eqref{A}. Thus,  $T_{\epsilon} \geq \tau^*$. Since $\epsilon > 0$ was arbitrary, we conclude that the support of \( f_t \) remains uniformly bounded on \( [0, \tau^*] \), and the solution can be extended beyond \( \tau^* \). Indeed, from the above inequality, one can find that $R_X(t) \leq R_X(0)+t R_V(0),$ for $t \in [0,\tau^*].$ \\
Repeating this argument iteratively on time intervals of length \( \tau^* \), we construct a unique global-in-time solution. 
Finally, following \cite{Carrillo}, we obtain that the measure-valued solution satisfies the weak formulation \eqref{measure}, and that it is the push-forward of density $\nu_0$ through the flow map generated by $F^m[f_t].$
\end{proof}
%%%%%%%%%%%%%%%%%%%%%%%%%%%%%%%%%%%%%%%%%%%%%%%%%%%%
%
%
%
%
%
%
%%%%%%%%%%%%%%%%%%%%%%%%%%%%%%%%%%%%%%%%%%%%%%%%%%%%
\subsection{Infinite population limit for both leaders and followers}

We now consider the case in which both populations, leaders and followers, consist of infinitely many agents. For the mean-field system \eqref{pde2}, we establish the existence and uniqueness of measure-valued solutions, using arguments analogous to those employed in the previous subsection. As before, we assume that the interaction kernels $\psi(\cdot)$, $\phi(\cdot)$, and $\rho(\cdot)$ appearing in the fluxes \eqref{flux2} and \eqref{flux3} are positive, bounded, and Lipschitz continuous. Let us denote by
$$ L := \max\{L_{\psi}, L_{\phi}, L_{\rho}\}, $$
where $L_{\psi}, L_{\phi}, L_{\rho}$ are the respective Lipschitz constants.

As before, we prove a regularity estimate on the velocity fields induced by the measure solutions.

\begin{lem}\label{lip2}
Consider the system $\eqref{pde2}$ subject to the initial data $\eqref{init2}$. Given a time $T > 0$, suppose that $g_t,f_t \in C([0,T); \mathcal{P}(\mathbb{R}^{2d}))$ are measures with uniformly compact supports:
\[
\supp g_t \subset B^{2d}(0,R_1), \quad \supp f_t \subset B^{2d}(0,R_2), \quad \forall\, t \in [0,T), 
\]
where $B^{2d}(0,R_i)$ denotes the ball of radius $R_i > 0$ centered at the origin in $\mathbb{R}^{2d}$ for $i = 1, 2$. 

Then, the force fields $F[g_t]$ and $F[f_t]$ defined in $\eqref{flux2}$-$\eqref{flux3}$ satisfy the following estimates:
\begin{itemize}
  \item (Lipschitz continuity) There exist constants $K_1, K_2 > 0$ such that
  $$
  |F[g_t](y,w)-F[g_t](\tilde{y},\tilde{w})| \leq K_1 (|y - \tilde{y}|+|w-\tilde{w}|), $$
  and
  $$ |F[f_t](x,v)-F[f_t](\tilde{x},\tilde{v})| \leq K_2 (|x-\tilde{x}|+|v-\tilde{v}|),
  $$
  for all $(y,w),(\tilde{y},\tilde{w}) \in B^{2d}(0,R_1)$, $(x,v),(\tilde{x},\tilde{v}) \in B^{2d}(0,R_2)$, and $t \in [0,T].$

  \item (Uniform boundedness) There exist constants $C_1, C_2 > 0$ such that
  $$
  |F[g_t](y,w)| \leq C_1, \quad |F[f_t](x,v)| \leq C_2,
  $$
  for all $(y,w) \in B^{2d}(0,R_1)$, $(x,v) \in B^{2d}(0,R_2)$, and $t \in [0,T].$
\end{itemize}
\end{lem}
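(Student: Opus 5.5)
The plan is to mirror the proof of Lemma \ref{lip}. We treat the purely leader field $F[g_t]$ first, and then the follower field $F[f_t]$, which splits naturally into a follower--follower part and a leader--follower part. The only ingredients are the Lipschitz bounds on $\psi,\phi,\rho$ (with common constant $L$), the uniform bound $K$ from \eqref{K}, the memory normalization \eqref{memory_cond}, and the fact that $g_s,f_s$ are probability measures supported in $B^{2d}(0,R_1)$ and $B^{2d}(0,R_2)$ for every $s$ in the delay window. The support assumption is needed at delayed times $s\in[t-\bar\tau,t]$, including $s\in[-\bar\tau,0]$. I would therefore read the hypothesis as including the initial history $\bar\mu_s,\bar\nu_s$, enlarging $R_1,R_2$ if necessary.

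For $F[g_t]$, fix $(y,w),(\tilde y,\tilde w)\in B^{2d}(0,R_1)$ and add and subtract $\psi(|\tilde y-\bar y|)(\bar w-w)$ inside the integral. This gives
\[
|F[g_t](y,w)-F[g_t](\tilde y,\tilde w)|\le \int_{t-\tau_1(t)}^t G(t-s)\int_{\mathbb{R}^{2d}}\Bigl(L|y-\tilde y|\,|\bar w-w|+\psi(|\tilde y-\bar y|)|w-\tilde w|\Bigr)g_s(d\bar y,d\bar w)\,ds .
\]
Since $|\bar w-w|\le 2R_1$ on the supports, this yields $K_1:=\max\{2R_1L,K\}$. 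The same estimate with $|\bar w-w|\le 2R_1$ and $\psi\le K$ gives $C_1:=2KR_1$.

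For $F[f_t]$ we write the difference as $I+II$. Term $I$ is the follower part and is handled exactly as above, with $\phi$, $\tau_2$ and $f_s$ in place of $\psi$, $\tau_1$ and $g_s$, giving $I\le 2R_2L|x-\tilde x|+K|v-\tilde v|$. Term $II$ involves $\rho(|x-\bar y|)(\bar w-v)$ integrated against $g_s$. Here $\bar w$ ranges over $\supp g_s$, so $|\bar w|\le R_1$, while $v$ ranges over $B^{2d}(0,R_2)$. The same add-and-subtract then gives $II\le L(R_1+R_2)|x-\tilde x|+K|v-\tilde v|$. Summing and using $\int G\le 1$ once per term yields
\[
K_2:=\max\{2R_2L+L(R_1+R_2),\,2K\}.
\]
The boundedness bound follows directly as $C_2:=2KR_2+K(R_1+R_2)$.

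There is no real obstacle, as all steps are routine. The only point that needs care is the mixed term $II$: there the field is evaluated on $B^{2d}(0,R_2)$ while integrated against a measure supported in $B^{2d}(0,R_1)$, so both radii enter the constants. It must also be checked that all constants are independent of $t$, which holds because \eqref{memory_cond} bounds the time integral uniformly.
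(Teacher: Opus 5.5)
Your proposal is correct and matches the paper's proof, which just repeats the add-and-subtract argument of Lemma \ref{lip}. Your constants are exactly the paper's, since $2R_2L+L(R_1+R_2)=L(R_1+3R_2)$ and $2KR_2+K(R_1+R_2)=K(R_1+3R_2)$; your remark that the support bound is also needed on the delay window, including $s\in[-\bar\tau,0]$ where $g_s=\bar\mu_s$ and $f_s=\bar\nu_s$, is a point the paper leaves implicit, and reading the hypothesis that way is right.
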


\begin{proof}
Arguing in the same way as in the proof of Lemma $\ref{lip}$, we estimate the differences in the force fields with constants:
\[
K_1 := \max \left\{K , 2R_1 L\right\}, \qquad K_2 = \max \left\{ 2K,L(R_1+3R_2)\right\}.
\]
The uniform boundedness of the velocity fields follows similarly with constants:
\[
C_1 := 2K R_1, \qquad C_2= K (R_1+3R_2).
\]
This completes the proof.
\end{proof}

With the above estimates in hand, we now prove the existence and uniqueness result for the mean-field system \eqref{pde2}.

\begin{proof}[Proof of Theorem \ref{limit_result}: existence and uniqueness in Case (ii)]
We proceed in parallel to the argument for Case (i) in the previous subsection. Applying Lemma $\ref{lip2}$ and the framework established in \cite{Carrillo}, we first deduce the global-in-time existence and uniqueness of a measure-valued solution $g_t \in C([0,T); \mathcal{P}(\mathbb{R}^{2d}))$.
We then define the support control quantities:
\[%\begin{equation}\label{R2}
\tilde{R}_X(t) := \max_{-\bar\tau \leq s \leq t} \left\{ \max_{x \in \overline{\supp_X g_s}} |x|, \max_{z \in \overline{\supp_X f_s}} |z| \right\},
\]%\end{equation}
and 
\[%\begin{equation}\label{R2}
\tilde{R}_V(t) := \max_{-\bar\tau \leq s \leq t} \left\{ \max_{v \in \overline{\supp_V g_s}} |v|, \max_{w \in \overline{\supp_V f_s}} |w| \right\},
\]%\end{equation}
and show that $\tilde{R}_X(t)$ and $\tilde{R}_V(t)$ remains uniformly bounded in time. The regularity of the force fields and compact support of the initial data then yield global-in-time existence and uniqueness of the second component $f_t \in C([0,T); \mathcal{P}(\mathbb{R}^{2d}))$, thereby completing the proof.
\end{proof}

\section{Stability and consensus in the mean-field regime}\label{stability}
 In this section, we analyze the stability and asymptotic behavior of solutions to the mean-field systems. We begin by establishing a stability estimate with respect to initial data, measured in the $1$-Wasserstein distance. This will be followed by an investigation of large-time consensus behavior.

\subsection{Wasserstein stability estimate}
The following lemma provides a stability estimate for solutions of \eqref{pde1} with respect to initial perturbations.

\begin{lem}\label{4.3}
Let $\{ (y_j^1,w_j^1),f_t^1\}$ and $\{(y_j^2,w_j^2), f_t^2\},$ for $j=1,\dots,m,$ be two solutions of the system given by \eqref{leaders}-\eqref{pde1} constructed in Theorem \ref{limit_result}, corresponding to initial data $\{ ( y_j^{1,0}, w_j^{1,0}), \nu_s^1\}$ and $\{( y_j^{2,0},w_j^{2,0}), \nu_s^2\}$, for $j=1,\dots,m,$ respectively. Then, there exists a constant $C_T>0$, depending on $T,$ such that for all $t \in [0,T]$ we have
\begin{align*}
&d(f_t^1,f_t^2) +  \frac1m \sum_{j=1}^m \vert (y_j^1,w_j^1)(t) - (y_j^2,w_j^2)(t)\vert \cr
&\quad \leq  C_T \sup_{s \in [-\bar\tau,0]}d(\nu_s^1,\nu_s^2) + C_T \sup_{s \in [-\bar\tau,0]}\left( \frac1m \sum_{j=1}^m \vert  (y_j^{1,0},w_j^{1,0})(s) -  (y_j^{2,0},w_j^{2,0})(s)\vert \right).
\end{align*}
\end{lem}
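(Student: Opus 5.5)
The plan is the standard Dobrushin-type argument: couple the two solutions through their characteristic flows, then close a delayed Gronwall inequality for a quantity that combines the leader discrepancy with the follower Wasserstein distance. By Theorem \ref{limit_result} (Case (i)), $f_t^k=(Z^k_t)\#\nu_0^k$, where $Z^k_t=(X^k,V^k)(t;\cdot)$ is the flow generated by $F^m_k:=F^m[f^k_t]$ computed with the leaders $(y^k_j,w^k_j)$, $k=1,2$. On $[0,T]$ all supports stay in a common ball $B^{2d}(0,R_T)$, and Lemma \ref{L3} bounds all leader velocities, so Lemma \ref{lip} applies to both fields with common constants $\tilde K,\tilde C$.

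First, choose an optimal plan $\pi_0\in\Pi(\nu_0^1,\nu_0^2)$. For $s\in[-\bar\tau,0]$, define the error through the plans attaining $d(\nu_s^1,\nu_s^2)$. Set
\[
Q(t):=\int |Z^1_t(z_1)-Z^2_t(z_2)|\,\pi_0(dz_1,dz_2)+\frac1m\sum_{j=1}^m|(y^1_j,w^1_j)(t)-(y^2_j,w^2_j)(t)| .
\]
Since $(Z^1_t\times Z^2_t)\#\pi_0\in\Pi(f^1_t,f^2_t)$, we have $d(f^1_t,f^2_t)+\frac1m\sum_j|\cdots|\le Q(t)$. It is therefore enough to bound $Q$.

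Next, differentiate along the characteristics. The position part is bounded by $|V^1-V^2|$. For the velocity part, split
\[
F^m_1(Z^1)-F^m_2(Z^2)=\big(F^m_1(Z^1)-F^m_1(Z^2)\big)+\big(F^m_1(Z^2)-F^m_2(Z^2)\big).
\]
Lemma \ref{lip} bounds the first bracket by $\tilde K|Z^1-Z^2|$. The second bracket has two pieces.

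\begin{itemize}
\item \emph{Follower-follower piece.} For each delayed time $s$, use the Kantorovich duality characterization of $d_1$ with the test function $(\bar x,\bar v)\mapsto\phi(|x-\bar x|)(\bar v-v)$. On $B(0,R_T)$ this function is Lipschitz with constant at most $L_\phi 2R_T+K$, so the piece is at most $C\,d(f^1_s,f^2_s)\le C\,Q(s)$. For $s\le0$ it is at most $C\,d(\nu^1_s,\nu^2_s)$.
\item \emph{Leader piece.} The Lipschitz continuity of $\rho$ and the velocity bounds give the estimate $C\frac1m\sum_j|(y^1_j,w^1_j)(s)-(y^2_j,w^2_j)(s)|$.
\end{itemize}

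The leader equations \eqref{leaders} decouple from the followers. The analogous splitting, using the Lipschitz continuity of $\psi$ and the bounds $|w_j|\le C_0^V$, gives a closed delayed bound for the leader discrepancy. One caveat: the kernels $c_{ij}(t,s)$ depend on $y_i(t)$ and $y_j(s)$, so the discrepancy terms at times $t$ and $s$ both appear.

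Integrating against $\pi_0$, and using \eqref{memory_cond} to integrate out the kernel $G$, yields
\[
Q(t)\le Q(0)+C\int_0^t\Big(Q(r)+\sup_{\sigma\in[r-\bar\tau,r]}\tilde Q(\sigma)\Big)dr .
\]
Here $\tilde Q(\sigma)=Q(\sigma)$ for $\sigma\ge0$, while for $\sigma<0$ it is the initial-data quantity $\sup_{s}d(\nu^1_s,\nu^2_s)+\sup_s\frac1m\sum_j|\cdots|$. To close, introduce $\bar Q(t):=\max\{\sup_{[0,t]}Q,\ \text{initial quantity}\}$, which is nondecreasing. Then $\bar Q(t)\le \bar Q(0)+2C\int_0^t\bar Q$, and Gronwall gives $\bar Q(t)\le e^{2CT}\bar Q(0)$. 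Since $Q(0)\le d(\nu_0^1,\nu_0^2)$ plus the leader term at $0$, this is the claimed estimate with $C_T=e^{2CT}$ (times a constant).

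The main obstacle is the delayed mixed term. The pairing at time $t$ is by $\pi_0$, but $F^m$ involves $f_s$ at earlier times $s$. The coupling cost at time $s$ must therefore be controlled by $Q(s)$ via duality for each fixed $s$, rather than by a single coupling at time $t$. On $[-\bar\tau,0]$ this forces the use of the given sup of the initial distances. This is exactly why the supremum over $[-\bar\tau,0]$ appears in the statement, and why the running-max function $\bar Q$ is needed to make the delayed Gronwall argument work.
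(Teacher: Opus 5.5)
Your argument is correct, but it is organized differently from the paper's proof.

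\emph{The paper's route.} The paper works with transport \emph{maps}. It takes an optimal map $\mathcal{S}_0$ with $\mathcal{S}_0\#\nu_0^1=\nu_0^2$ and sets $\mathcal{T}^t=Z^2_t\circ\mathcal{S}_0\circ(Z^1_t)^{-1}$ for $t>0$. For $s\in[-\bar\tau,0]$ it takes $\mathcal{T}^s$ to be an optimal map between $\nu_s^1$ and $\nu_s^2$. It then compares $F^m[f^1_t](x,v)$ with $F^m[f^2_t](\mathcal{T}^t(x,v))$ directly. Inside the delayed integral it rewrites $f^2_s=\mathcal{T}^s\# f^1_s$, so a pointwise Lipschitz bound on the integrand produces $\int G(t-s)\theta(s)\,ds$, with $\theta(s)=\int|z-\mathcal{T}^s z|\,f^1_s(dz)$. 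The leader error $\xi$ is closed first by its own delayed Gr\"onwall inequality and then inserted into the inequality for $\theta$.

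\emph{Your route.} You couple by a plan $\pi_0$ and split the force difference into two parts: same field at different points, handled by Lemma \ref{lip}, and same point with different fields. You bound the second part at each delayed time by Kantorovich--Rubinstein duality. This gives $d(f^1_s,f^2_s)$ directly, without having to relate couplings at different times. Leaders and followers are then closed together through the running maximum $\bar Q$.

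\emph{What each buys.} Your route is more general and cleaner. Optimal plans always exist, whereas optimal maps for $d_1$ need not. For instance, they fail when the source measure has atoms, which is exactly the case for the empirical measures $\nu^N_s$ to which the lemma is applied in Section~\ref{stability}. Your version also handles the history interval $[-\bar\tau,0]$ and the final delayed Gr\"onwall step explicitly, whereas the paper only sketches the latter ("using a similar argument"). The paper's map-based route gives a Lagrangian, pointwise comparison and avoids duality, but at the price of assuming those maps exist.

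\emph{One detail to add.} The test function $(\bar x,\bar v)\mapsto\phi(|x-\bar x|)(\bar v-v)$ is vector-valued and Lipschitz only on $B^{2d}(0,R_T)$. Apply duality to $\langle\cdot,e\rangle$ for each unit vector $e$, after a McShane extension from the ball, which is legitimate since both measures are supported there. This recovers the same constant.
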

\begin{proof}
Once again, we begin by constructing the system of characteristics associated with each solution. For $i = 1,2$, define $(X^i(t;x,v), V^i(t;x,v)): [0,T] \times \mathbb{R}^{2d} \to \mathbb{R}^{2d}$ as the flow map solving
$$
\begin{cases}
\displaystyle \frac{d}{d t}X^i(t;x,v)= V^i(t;x,v),\\[2mm]
\displaystyle \frac{d}{d t}V^i(t;x)= F^{m}[f_t^i](X^i(t;x,v), V^i(t;x,v)), 
\end{cases}
$$
where $F^{m}[f_t^i]$ is given by the formula \eqref{flux1}. The characteristic system is subject to the initial data 
$$ X^i(0;x,v)=x, \quad \mbox{and} \quad V^i(0;x,v)=v,$$
for $(x,v) \in \mathbb{R}^{2d}$ and $i=1,2.$ Here, we want to point out that we use $|\cdot|$ to identify the norm in $\mathbb{R}^{2d}$ given by  
$$ \vert (x,v)- (\tilde{x},\tilde{v})\vert = \vert x-\tilde{x}\vert + \vert v-\tilde{v}\vert.$$
By Theorem \ref{limit_result}, the characteristic system is well-defined on the interval $[0,T]$. By standard properties of transport by characteristics, we have $f_t^i = Z^i(x,v) \# \nu_0^i,$  where $$Z^i(x,v):=(X^i(t;x,v), V^i(t;x,v)),$$  for all $t\in [0,T]$ and $i=1,2.$ As before, we define the quantity $R_V^i(t)$ and $R_X^i(t)$ as in \eqref{RX} and \eqref{RV}, respectively. \\
 Let $\mathcal{S}_0: \mathbb{R}^{2d} \to \mathbb{R}^{2d}$ be the optimal transport map pushing $\nu_0^1$ to $\nu_0^2$ with respect to the $1$-Wasserstein distance, i.e.
\[
\nu_0^2 = \mathcal{S}_0 \# \nu_0^1, \quad d(\nu_0^1,\nu_0^2) = \displaystyle \int_{\mathbb{R}^{2d}} \vert (x,v)-\mathcal{S}_0(x,v)\vert \nu_0^1(dx,dv).
\]
 Therefore, defining a map 
$\mathcal{T}^t:= Z^2(t; \cdot, \cdot) \circ \mathcal{S}_0 \circ (Z^1(t; \cdot,\cdot))^{-1},$ for $t \in [0,T]$, we have
\begin{equation}\label{T}
\mathcal{T}^t \# f_t^1 = f_t^2,
\end{equation} 
and thus
\[
d(f_t^1,f_t^2) \leq \int_{\mathbb{R}^{2d}} \vert (x,v)-\mathcal{T}^t(x,v) \vert f_t^1(dx,dv) =: \theta(t).
\]
Using the identity $\mathcal{T}^t \circ Z^1(t; \cdot,\cdot) = Z^2(t; \cdot,\cdot)\circ \mathcal{S}_0$, we rewrite $\theta(t)$ as
\[
\theta(t) = \int_{\mathbb{R}^{2d}} \vert Z^1(t;x,v)-Z^2(t;\mathcal{S}_0(x,v))\vert \nu_0^1(dx,dv).
\]
To incorporate the time-delay structure, we extend $\mathcal{T}^s$ for $s \in [-\bar\tau, 0],$ as the optimal transport map between $\nu_s^1$ and $\nu_s^2$, and define
\[%\begin{equation}\label{theta}
\theta(s):=d(\nu_s^1,\nu_s^2)=\int_{\mathbb{R}^{2d}}\vert (x,v)-\mathcal{T}^s(x,v)\vert \nu_s^1(dx,dv), \quad s \in [-\bar\tau,0].
\]%\end{equation}

Now, we estimate the time derivative of $\theta(t)$. For $t \in (0,T)$,
\begin{equation*} \begin{split}
\frac{d}{d t} \theta(t) & \leq \int_{\mathbb{R}^{2d}} \vert V^1(t;x,v)-V^2(t;\mathcal{S}_0(x,v))\vert \nu_0^1(dx,dv) \\
& \hspace{2cm}+ \int_{\mathbb{R}^{2d}} \vert F^m[f_t^1](Z^1(t;x,v))-F^m[f_t^2](Z^2(t;\mathcal{S}_0(x,v)))\vert \nu_0^1(dx,dv). \\
\end{split} 
\end{equation*}
The first term of the right-hand-side is, by definition, estimated from above by $\theta(t).$ The second term of the right-hand-side can be rewritten as
$$ J:= \int_{\mathbb{R}^{2d}} \vert F^m[f_t^1](x,v)-F^m[f_t^2](\mathcal{T}^t(x,v))\vert f_t^1(dx,dv),$$
and we can estimate the force difference as 
\begin{align*}
& \vert F^m[f_t^1](x,v)-F^m[f_t^2](\mathcal{T}^t(x,v))\vert\cr
&\leq \Big\vert \int_{t-\tau_2(t)}^t G(t-s) \int_{\mathbb{R}^{2d}} \phi(\vert x-\bar{x}\vert)(\bar{v}-v) f^1_{s}(d\bar{x},d\bar{v}) \ ds \cr 
& \hspace{5cm} - \int_{t-\tau_2(t)}^t G(t-s) \int_{\mathbb{R}^{2d}} \phi(\vert \mathcal{T}^t_x-\bar{x}\vert)(\bar{v}-\mathcal{T}^t_v) f^2_{s}(d\bar{x},d\bar{v}) \ ds \Big\vert \cr
& + \frac1m \Big\vert \sum_{j=1}^m \int_{t-\tau_1(t)}^t G(t-s) \rho(\vert x-y_j^1(s)\vert)(w_j^1(s) - v) \ ds \\
& \hspace{5cm} - \sum_{j=1}^m \int_{t-\tau_1(t)}^t G(t-s)\rho(\vert \mathcal{T}^t_x-y_j^2(s)\vert )(w_j^2(s) - \mathcal{T}^t_v) \ ds \Big\vert \cr
&\quad =: I + II.
\end{align*}
We call $\mathcal{T}^t_x$ and $\mathcal{T}^t_v$ respectively the space and velocity components of the functional $\mathcal{T}^t.$ \\ Next, using \eqref{T}, 
\begin{align*}
I&= \Big\vert \int_{t-\tau_2(t)}^t G(t-s) \int_{\mathbb{R}^{2d}}\phi(\vert x-\bar{x}\vert)(\bar{v}-v) f^1_{s}(d\bar{x},d\bar{v}) \ ds \cr
&  \hspace{3cm}- \int_{t-\tau_2(t)}^t G(t-s) \int_{\mathbb{R}^{2d}} \phi(\vert \mathcal{T}^t_x-\mathcal{T}^{s}_{\bar{x}}\vert)(\mathcal{T}^{s}_{\bar{v}}-\mathcal{T}^t_v) f^1_{1}(d\bar{x},d\bar{v}) \ ds \Big\vert \cr
& \leq \int_{t-\tau_2(t)}^t G(t-s) \int_{\mathbb{R}^{2d}} \big\vert \phi(\vert x-\bar{x}\vert) - \phi(\vert \mathcal{T}^t_x-\mathcal{T}^{s}_{\bar{x}}\vert)\big\vert \vert \bar{v}-v \vert f^1_{s}(d\bar{x},d\bar{v})  \ ds \cr
&\hspace{2cm} + \int_{t-\tau_2(t)}^t G(t-s) \int_{\mathbb{R}^d} \phi(\vert \mathcal{T}^t_x-\mathcal{T}^{s}_{\bar{x}}\vert)\vert (\bar{v}-v) - (\mathcal{T}^{s}_{\bar{v}}-\mathcal{T}^t_v) \vert f^1_{s}(d\bar{x},d\bar{v}) \ ds.
\end{align*}
Using the Lipschitz continuity and boundedness of the function $\phi$, we find
\begin{equation*}
\begin{split}
& \int_{t-\tau_2(t)}^t G(t-s) \int_{\mathbb{R}^{2d}} \big\vert \phi(\vert x-\bar{x}\vert) - \phi(\vert \mathcal{T}^t_x-\mathcal{T}^{s}_{\bar{x}}\vert)\big\vert \vert \bar{v}-v \vert f^1_{s}(d\bar{x},d \bar{v}) \ ds \\
& \leq L_\phi (\vert v \vert + R_V^1(t))\left( \vert x-\mathcal{T}^t_x\vert + \int_{t-\tau_2(t)}^t G(t-s)\int_{\mathbb{R}^{2d}} \vert \bar{x}-\mathcal{T}^{s}_{\bar{x}}\vert f^1_{s}(d\bar{x},d\bar{v}) \ ds \right)
\end{split}
\end{equation*}
and 
\begin{equation*}
\begin{split}
& \int_{t-\tau_2(t)}^t G(t-s) \int_{\mathbb{R}^d} \phi(\vert \mathcal{T}^t_x-\mathcal{T}^{s}_{\bar{x}}\vert)\vert (\bar{v}-v) - (\mathcal{T}^{s}_{\bar{v}}-\mathcal{T}^t_v) \vert f^1_{s}(d\bar{x},d\bar{v}) \ ds \\
&\quad \leq K \left( \vert v-\mathcal{T}^t_v\vert +  \int_{t-\tau_2(t)}^t G(t-s) \int_{\mathbb{R}^{2d}} \vert \bar{v}-\mathcal{T}^{s}_{\bar{v}}\vert f^1_{s}(d\bar{x},d\bar{v}) \ ds \right).
\end{split}
\end{equation*}
This implies
\[
\begin{split}
I & \leq C_1 \left( \vert (x,v)-\mathcal{T}^t(x,v)\vert + \int_{t-\tau_2(t)}^t G(t-s) \int_{\mathbb{R}^{2d}} \vert (\bar{x},\bar{v})-\mathcal{T}^{s}(\bar{x},\bar{v})\vert f^1_{s}(d\bar{x},d \bar{v}) \ ds \right) \\
& \hspace{2cm} = C_1 \left( \vert (x,v)-\mathcal{T}^t(x,v)\vert + \int_{t-\tau_2(t)}^t G(t-s)\theta(s) \ ds \right),
\end{split}
\]
with $C_1>0$ is a suitable constant. 
Similarly, we estimate
\begin{align*}
II &\leq  \frac1m \int_{t-\tau_1(t)}^t G(t-s)\sum_{j=1}^m  \big| \rho(\vert x-y_j^1(s)\vert) -  \rho(\vert \mathcal{T}_x^t-y_j^2(s)\vert)\big| |w_j^1(s) - v| \ ds \cr
&  + \frac1m \int_{t-\tau_1(t)}^t G(t-s) \sum_{j=1}^m   \rho(\vert \mathcal{T}^t_x- y_j^2(s)\vert) \big|(w_j^1(s) - v) - (w_j^2(s) - \mathcal{T}^t_v)\big| \ ds \cr
& \leq L_\rho(C_0^V + |v|) \left( |x-\mathcal{T}^t_x| + \frac1m  \int_{t-\tau_1(t)}^t G(t-s) \sum_{j=1}^m|y_j^1(s) - y_j^2(s)| \ ds  \right) \cr
& + K \left( \vert v-\mathcal{T}^t_v\vert + \frac{1}{m} \int_{t-\tau_1(t)}^t G(t-s) \sum_{j=1}^m  \vert w_j^1(s)-w_j^2(s) \vert  \ ds \right).
\end{align*}
Therefore, we have that, given $C_2>0$ a suitable constant, 
\begin{equation*}
\begin{split}
II & \leq C_2 \left( \vert (x,v)-\mathcal{T}^t(x,v)\vert + \frac{1}{m} \int_{t-\tau_1(t)}^t G(t-s) \sum_{j=1}^m  \vert (y_j^1, w_j^1)(s)-(y_j^2,w_j^2)(s) \vert \ ds \right). 
\end{split}
\end{equation*}
Combining the estimates for $I$ and $II$, we deduce
\begin{align*}
& \vert F^m[f_t^1](x,v)-F^m[f_t^2](\mathcal{T}^t(x,v))\vert \leq C |(x,v)-\mathcal{T}^t(x,v)| \cr 
& + C \left(\int_{t-\tau_2(t)}^t G(t-s)\theta(s) \ ds + \frac1m \int_{t-\tau_1(t)}^t G(t-s) \sum_{j=1}^m  | (y_j^1,w_j^1)(s) - (y_j^2,w_j^2)(s)| \ ds  \right),
\end{align*}
for some constant $C>0.$ Let us call 
$$ \xi(s) := \frac1m \sum_{j=1}^m \vert (y_j^1,w_j^1)(s) -  (y_j^2,w_j^2)(s)\vert.
$$ 
This, together with the boundedness of support of $f^1_t$, yields
\[
J \leq  C \left(\theta(t) + \int_{t-\tau_2(t)}^t G(t-s) \theta(s) \ ds  + \int_{t-\tau_1(t)}^t G(t-s) \xi(s) \ ds \right),
\]
for some constant $C>0$ that depends on $L_{\phi},$ $L_{\rho}$ and the support diameters $R_X^i(t)$ and $R_V^i(t),$ for $i=1,2.$ Now, we use \eqref{memory_cond} to get
\[
J \leq  C \left(\theta(t) + \sup_{s \in [t-\tau_2(t),t]} \theta(s)  + \sup_{s \in [t-\tau_1(t),t]} \xi(s) \right).
\]
To close the estimate, we need to estimate $\xi(t)$ using the leader dynamics. We first get
\begin{align*}
&\frac{1}{m} \sum_{h=1}^m  \int_{t-\tau_1(t)}^t G(t-s) \left|\psi(\vert y_h^1(s)- y_j^1(t)\vert)(w_h^1(s) - w_j^1(t)) - \psi(\vert y_h^2(s)- y_j^2(t)\vert)( w_h^2(s) - y_j^2(t))\right|\ ds \cr
&\quad \leq \frac{1}{m} \int_{t-\tau_1(t)}^t G(t-s) \sum_{h=1}^m \big| \psi(\vert y_h^1(s)- y_j^1(t)\vert) -  \psi(\vert y_h^2(s)- y_j^2(t)\vert) \big| | w_h^1(s) - w_j^1(t)| \ ds  \cr
& \quad + \frac{1}{m} \int_{t-\tau_1(t)}^t G(t-s) \sum_{h=1}^m \psi(\vert y_h^2(s)- y_j^2(t)\vert) \big| (w_h^1(s) -  w_j^1(t)) - ( w_h^2(s) -  w_j^2(t))\big| \ ds \cr
&\quad \leq 2C_0^y L_\psi \left( \vert y_j^1(t) - y_j^2(t)\vert + \frac{1}{m} \int_{t-\tau_1(t)}^t G(t-s)\sum_{h=1}^m \vert y_h^1(s) - y_h^2(s)\vert \ ds \right) \cr 
&\hspace{3cm} + K \left( \vert w_j^1(t) - w_j^2(t)\vert + \frac{1}{m} \int_{t-\tau_1(t)}^t G(t-s) \sum_{h=1}^m \vert w_h^1(t-\tau_1) - w_h^2(t-\tau_1)\vert \ ds \right).
\end{align*}
This yields, using again \eqref{memory_cond},
\[
\frac{d}{dt} \xi(t) \leq C \left(\xi(t) + \sup_{s \in [t-\tau_1(t),t]}\xi(s)\right),
\]
where $C>0$ is a constant depending on $L_{\psi}, \ C_0^y$ and $K.$ Now, since $0 < \tau_i(t) \leq \bar{\tau},$ easily we have that, for all $t \in (0,T),$
$$ \sup_{s \in [t-\tau_1(t),t]}\xi(s) \leq \sup_{s \in [-\bar\tau,t]}\xi(s) =: \Xi(t). $$ 

Then, from above, we obtain
\[
\frac{d}{d t} \xi(t) \leq  2C \Xi(s).
\]
Integrating this inequality we obtain
$$ \xi(t) \leq e^{2Ct} \sup_{s \in [-\bar\tau,0]} \xi(s), $$
for all $t \in [0,T],$ and we get
\[
\frac{d}{dt}\theta(t) \leq C \left( \theta(t) + \sup_{s \in [t-\bar\tau,t]} \theta(s) + e^{2CT} \sup_{s \in [-\bar \tau, 0]}\xi(s)\right).
\]
Using a similar argument as above and applying the Gr\"onwall inequality, we deduce 
\[
\theta(t) \leq C_T \left( \sup_{s \in [-\bar\tau, 0]}\theta(s)  + \sup_{s \in [-\bar\tau, 0]}\xi(s)\right),
\]
for some constant $C_T>0,$ depenting on the time horizon. Finally, using the above results, we have
\begin{align*}
&d(f_t^1,f_t^2) + \frac1m \sum_{j=1}^m \vert  (y_j^1,w_j^1)(t) - (y_j^2,w_j^2)(t)\vert \cr
&\quad \leq C_T \sup_{s \in [-\bar\tau,0]}d(\nu_s^1,\nu_s^2) + C_T \sup_{s \in [-\bar\tau,0]}\left( \frac1m \sum_{j=1}^m \vert (y_j^{1,0},w_j^{1,0})(s) - ( y_j^{2,0},w_j^{2,0})(s)\vert \right).
\end{align*}
This completes the proof.
\end{proof}

We conclude this section by stating the stability result for the system \eqref{pde2}. The proof follows from a direct adaptation of the arguments developed in Lemma \ref{4.3}, using the same strategy based on Wasserstein distance estimates and characteristic flows. Since no essential new difficulties arise in this case, we omit the detailed proof.

\begin{lem}\label{stab}
Let $T>0$, and let $(g_t^1,f_t^1)$ and $(g_t^2,f_t^2)$ be two measure-valued solutions of \eqref{pde2} on the time interval $[0,T]$, constructed according to Theorem \ref{limit_result}. Then, there exists a constant $\bar{C}_T>0$, depending on $T,$ such that 
\[%\begin{equation}\label{est_pde}
d(g_t^1, g_t^2) + d(f_t^1, f_t^2) \leq  \bar{C}_T\sup_{s \in [-\bar\tau,0]} d(\bar{\mu}_s^1, \bar{\mu}_s^2) + \bar{C}_T \sup_{s \in [-\bar\tau,0]} d(\bar{\nu}_s^1, \bar{\nu}_s^2)
\]%\end{equation}
for all $t \in [0,T)$.
\end{lem}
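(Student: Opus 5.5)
We adapt the characteristic-flow argument of Lemma \ref{4.3} to the fully kinetic system \eqref{pde2}. The leader density $g_t$ now plays the role of the finite family $(y_j,w_j)$.

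\emph{Step 1: characteristics and transport maps.} For $k=1,2$ let $Z_g^k(t;y,w)$ and $Z_f^k(t;x,v)$ be the characteristic flows generated by $F[g^k_t]$ and $F[f^k_t]$. By Theorem \ref{limit_result} these exist on $[0,T]$, and $g_t^k=Z_g^k(t)\#\bar\mu_0^k$, $f_t^k=Z_f^k(t)\#\bar\nu_0^k$. Let $\mathcal{S}^g_0$ and $\mathcal{S}^f_0$ be optimal maps pushing $\bar\mu_0^1$ to $\bar\mu_0^2$ and $\bar\nu_0^1$ to $\bar\nu_0^2$. If optimal maps fail to exist, use optimal plans or approximate by maps; the estimates are unchanged. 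Set
\[
\mathcal{T}_g^t:=Z^2_g(t)\circ\mathcal{S}^g_0\circ (Z^1_g(t))^{-1},\qquad \mathcal{T}_f^t:=Z^2_f(t)\circ\mathcal{S}^f_0\circ (Z^1_f(t))^{-1},
\]
and
\[
\theta_g(t):=\int|(y,w)-\mathcal{T}^t_g(y,w)|\,g^1_t,\qquad \theta_f(t):=\int|(x,v)-\mathcal{T}^t_f(x,v)|\,f^1_t.
\]
On $[-\bar\tau,0]$ we take $\mathcal{T}^s_g,\mathcal{T}^s_f$ to be optimal maps between the initial histories, so that $\theta_g(s)=d(\bar\mu^1_s,\bar\mu^2_s)$ and $\theta_f(s)=d(\bar\nu^1_s,\bar\nu^2_s)$ there. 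Then $d(g^1_t,g^2_t)\le\theta_g(t)$ and $d(f^1_t,f^2_t)\le\theta_f(t)$.

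\emph{Step 2: the leader equation.} Differentiating $\theta_g$ along characteristics, exactly as for term $I$ in Lemma \ref{4.3} with $\phi$ replaced by $\psi$, gives
\[
\theta_g'(t)\le C\Big(\theta_g(t)+\sup_{s\in[t-\bar\tau,t]}\theta_g(s)\Big).
\]
The constant $C$ depends on $K$, $L$ and the support radii $R_1,R_2$, which are uniformly bounded on $[0,T]$ by the support control in the proof of Theorem \ref{limit_result}, Case (ii). We use $\mathcal{T}^s_g\# g^1_s=g^2_s$ to rewrite the delayed integral against $g^1_s$, and then \eqref{memory_cond}.

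\emph{Step 3: the follower equation.} For $F[f_t]$ in \eqref{flux3}, the follower--follower term is handled as $I$ above. The leader--follower term is handled as $II$, with the sum over $j$ replaced by integration against $g^2_s=\mathcal{T}^s_g\#g^1_s$. This produces a $\sup_{s\in[t-\bar\tau,t]}\theta_g(s)$ contribution multiplied by $L(R_1+R_2)+K$. Hence
\[
\theta_f'(t)\le C\Big(\theta_f(t)+\sup_{[t-\bar\tau,t]}\theta_f+\sup_{[t-\bar\tau,t]}\theta_g\Big).
\]

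\emph{Step 4: closing the estimate.} Add the two inequalities and set $\Theta(t):=\sup_{s\in[-\bar\tau,t]}(\theta_g(s)+\theta_f(s))$. We obtain $\frac{d}{dt}(\theta_g+\theta_f)\le 3C\,\Theta(t)$. Since $\Theta$ is nondecreasing and the right-hand side controls its growth, a Gronwall argument gives $\Theta(t)\le e^{3Ct}\Theta(0)$. This yields the claim with $\bar C_T=e^{3CT}$.

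\emph{Main obstacle.} The hardest step is Step 3. The delayed leader measure appears inside the follower force, so the coupling is triangular but nonlocal in time. We must justify that the cross term is controlled by $\sup_{[t-\bar\tau,t]}\theta_g$ using the pushforward at the delayed time $s$ rather than at $t$. This requires bounding $|\bar w-v|$ uniformly by the support radii, and using that $\mathcal{T}^s_g$ is defined for all $s\in[-\bar\tau,T]$.
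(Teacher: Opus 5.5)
Your proposal is correct and takes the same route the paper intends: the paper omits this proof as a direct adaptation of the characteristic-flow and transport-map argument of Lemma \ref{4.3}, and you carry out that adaptation, with the leader measure $g_t$ and its maps $\mathcal{T}^s_g$ replacing the finite leader family and the quantity $\xi$. The only slip is cosmetic: adding your Step 2 and Step 3 bounds gives $\frac{d}{dt}(\theta_g+\theta_f)\le 4C\,\Theta(t)$ rather than $3C\,\Theta(t)$, so the constant becomes $\bar C_T=e^{4CT}$.
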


\subsection{Mean-field limit and emergence of flocking}

In this part, we provide the details on the proof of the flocking estimates stated in Theorem \ref{limit_result}, establishing the flocking behavior of measure-valued solutions to the mean-field systems \eqref{pde1} and \eqref{pde2}, based on a rigorous passage from the particle models \eqref{leaders}-\eqref{followers}. The key ingredient in the argument is the stability results obtained earlier, which allow us to control the distance between the empirical measure solutions of the particle system and the limiting measure-valued solutions.

 We divide the argument into two cases corresponding to the systems \eqref{pde1} and \eqref{pde2}.
 
 \medskip

\noindent {\it Case (i).}  Let $\{(y_i^0(s),w_i^0(s)), \nu_s\} \in C([-\bar\tau,0];\mathbb{R}^{2d}) \times C([-\bar\tau,0];\mathcal{P}(\mathbb{R}^{2d}))$ be given initial data. For each $N \in \mathbb{N}$, we construct a particle approximation of $\nu_s$ by defining
\[
\nu_s^{N}:=\frac{1}{N} \sum_{i=1}^N \delta_{\left(x_i^{N,0},v_i^{N,0}\right)(s)}, \quad s \in [-\bar\tau,0],
\]
where $\left(x_i^{N,0},v_i^{N,0}\right) \in C([-\bar\tau,0]; \mathbb{R}^{2d})$ are chosen such that 
\[
\sup_{s \in [-\bar\tau,0]} d\left(\nu_s,\nu_s^{N}\right) \rightarrow 0 \quad \mbox{as } N \to \infty.
\]
Likewise, we choose $\left(y_i^{N,0},w_i^{N,0}\right) \in C([-\bar\tau,0]; \mathbb{R}^{2d})$ satisfying
\[
 \sup_{s \in [-\bar\tau,0]} \left(\frac{1}{m} \sum_{i=1}^m \left\vert \left(y_i^{N,0},w_i^{N,0}\right)(s) - (y_i^0,w_i^0)(s)\right\vert  \right) \to 0  \quad \mbox{as } N \to \infty.
 \]

\begin{oss}
In principle, one could simply set $\left(y_i^{N,0},w_i^{N,0}\right) = (y_i^0,w_i^0)$ for all $i$ and $N$. 
However, we keep the above more general approximation procedure, since in the treatment of Case (ii) below we do not rely on such an identification and instead work with general approximating sequences. 
Adopting the same framework here makes the two cases fully parallel and simplifies the presentation.
\end{oss}

Let $\left\{\left(y_i^N,w_i^N\right)(t)\right\}_{i=1}^m$ and $\left\{\left(x_j^N,v_j^N\right)(t)\right\}_{j=1}^N$ denote the solution to the particle systems \eqref{leaders}-\eqref{followers} corresponding to these initial data. Then, by Theorem \ref{uf} and the definition of the velocity diameter $d_V(t)$, we have
\begin{equation}\label{lt_dis}
d_V(t) \leq e^{-\gamma(t-2\bar\tau)}D_0,
\end{equation}
for all $t \in [0,T).$ 

We now define the empirical measure
\[
f_t^{N}:= \frac{1}{N} \sum_{j=1}^N \delta_{\left(x_j^N,v_j^N\right)(t)},
\]
which is the measure-valued solution to the system \eqref{pde1} in the sense of Definition \ref{solution}. By Lemma \ref{4.3}, there exists a constant $C_T>0$, independent of $N$, such that 
\begin{align*}
&d(f_t^N,f_t) + \frac{1}{m} \sum_{i=1}^m \left\vert \left(y_i^N, w_i^N\right)(t) - (y_i,w_i)(t) \right\vert   \cr
& \leq C_T \sup_{s \in [-\bar\tau,0]}d(\nu_s^N,\nu_s) + C_T \sup_{s \in [-\bar\tau,0]} \left(\frac{1}{m} \sum_{i=1}^m \left\vert \left(y_i^{N,0},w_i^{N,0}\right)(s) - (y_i^0,w_i^0)(s)\right\vert  \right) .
\end{align*}
This implies that, fixing $T>0$ and letting $N \to +\infty,$ $d_V(t) = d_V^f(t)$ and $D_0 = D^f_0,$ and thus, passing to the limit in \eqref{lt_dis}, we obtain
\[
d_V^f(t) \leq  e^{-\gamma(t-2\bar\tau)}D_0^f.
\]
Since $T$ can be chosen arbitrarily and $D_0^f$ is independent of time, we conclude that the estimate holds for all $t \geq 0.$ The finiteness of $\sup_{t\geq 0} d_X^f $ is a direct consequence of the above. \\

\noindent {\it Case (ii).}  The argument is analogous. For given initial data $(\bar \mu_s, \bar \nu_s) \in C([-\bar\tau,0];\mathcal{P}(\mathbb{R}^{2d})) \times C([-\bar\tau,0];\mathcal{P}(\mathbb{R}^{2d}))$, we consider approximations
\[
\bar \mu_s^m:= \frac1m\sum_{i=1}^m \delta_{\left(\bar y_i^{m,0},\bar w_i^{m,0}\right)(s)} \quad \mbox{and} \quad \bar\nu_s^m \in C([-\bar\tau,0];\mathcal{P}(\mathbb{R}^{2d}))
\]
with $\left(\bar y_i^{m,0},\bar w_i^{m,0}\right) \in C([-\bar\tau,0]; \mathbb{R}^{2d})$ satisfying
\[
\sup_{s \in [-\bar\tau,0]}d(\bar \mu_s^m, \bar \mu_s) + \sup_{s \in [-\bar\tau,0]} d(\bar \nu_s^m, \bar \nu_s) \to 0 \quad \mbox{as } m \to \infty.
\]
Let $\left\{ y_i^m,w_i^m \right\}_{i=1}^m$ and $\bar f_t^m$ denote the solutions to the systems \eqref{leaders}-\eqref{pde1} corresponding to this initial data. Then, applying the result of Case (i), we obtain
\[
d_V^{f^m}(t) \leq  e^{-\gamma(t-2\bar\tau)}D_0^{f^m}.
\]
Next, define the empirical measure
\[
\bar g_t^m := \frac{1}{m} \sum_{i=1}^m \delta_{\left( y_i^m,w_i^m \right)},
\]
so that the pair $(\bar g_t^m, \bar f_t^m)$ solves the system \eqref{pde2}. Then, applying the stability estimate in Lemma \ref{stab}, we get
\[ 
d(\bar g_t^m, g_t) + d(\bar f_t^m, f_t) \leq  \bar{C}_T \sup_{s \in [-\bar\tau,0]} d(\bar \mu_s^m, \bar \mu_s) + \bar{C}_T \sup_{s \in [-\bar\tau,0]} d(\bar \nu_s^m, \bar \nu_s).
\] 
As before, taking the limit as $m \to \infty$, we conclude
\[
d^{f, g}(t) \leq  e^{-\gamma(t-2\bar\tau)}D_0^{f, g}.
\]
This completes the proof.

\section{ Conclusions}
In this paper, we investigated a delayed leader-follower Cucker-Smale type system, incorporates a memory effect acting both in the leader dynamics and in the follower interactions. \\
We first studied the asymptotic behaviour of the particle system and derived several qualitative properties of the dynamics. In particular, through a careful analysis of the evolution of the velocity diameter and a delayed Grönwall-type argument, we obtained uniform bounds on the spatial and velocity diameters of the system. These estimates allowed us to prove asymptotic flocking behavior under suitable assumptions on the interaction kernels and on the communication strength. \\
We then considered the corresponding mean-field limit, where the number of followers tends to infinity while the number of leaders remains fixed. This leads naturally to a hybrid ODE-PDE system consisting of a delayed Vlasov-type equation coupled with a finite-dimensional delayed leader dynamics. Using Wasserstein stability estimates together with the Lipschitz continuity of the delayed interaction field, we proved existence and uniqueness of global-in-time solutions. Using an adaptation of this method, we show that one can also prove the well-posedness of the fully mean-field regime obtained by sending the size of both populations to infinity.\\
The present work opens several possible research directions. An interesting problem is the study of control strategies and optimization mechanisms acting through the leaders, especially in the presence of delays and heterogeneous communication structures. It would also be relevant to investigate the emergence of more complex collective patterns, such as clustering or partial synchronization, as well as the effect of singular interaction kernels and state-dependent delays. 
Finally, the hybrid leader-follower framework developed here may provide a useful mathematical setting for the analysis of collective dynamics in applications involving autonomous agents, biological swarms, and networked multi-agent systems with memory effects.

	\bigskip
	\noindent {\bf Acknowledgements.} {\small The author gratefully acknowledges discussions with Cristina Pignotti.}

\end{document}